\numberwithin{equation}{section}
\newcommand{\SL}{\operatorname{SL}}
\newcommand{\GL}{\operatorname{GL}}
\newcommand{\PSL}{\operatorname{PSL}}
\newcommand{\Aut}{\operatorname{Aut}}
\newcommand{\cH}{\mathcal{H}}
\newcommand{\bC}{\mathbb{C}}
\newcommand{\bK}{\mathbb{K}}
\newcommand{\bN}{\mathbb{N}}
\newcommand{\bQ}{\mathbb{Q}}
\newcommand{\bR}{\mathbb{R}}
\newcommand{\bT}{\mathbb{T}}
\newcommand{\bZ}{\mathbb{Z}}
\newcommand{\ra}{\rightarrow}
\newcommand{\qand}{\quad \textrm{and} \quad}
\newcommand\subsetsim{\mathrel{%
\ooalign{\raise0.2ex\hbox{$\subset$}\cr\hidewidth\raise-0.8ex\hbox{\scalebox{0.9}{$\sim$}}\hidewidth\cr}}}
\newcommand{\eps}{\varepsilon}
\DeclareMathOperator{\supp}{supp}
\DeclareMathOperator{\cobnd}{Cobnd}
\DeclareMathOperator{\linspan}{span}
\DeclareMathOperator{\Sym}{Sym}
\DeclareMathOperator{\Ind}{Ind}
\DeclareMathOperator{\FC}{FC}
\theoremstyle{theorem}
\newtheorem{theorem}{Theorem}[section]
\newtheorem{corollary}{Corollary}[section]
\newtheorem{proposition}{Proposition}[section]
\newtheorem{lemma}{Lemma}[section]
\theoremstyle{definition}
\newtheorem{definition}{Definition}[section]
\newtheorem{remark}{Remark}[section]
\newtheorem*{example}{Example}
\newtheorem*{question}{Question}
\begin{document}

\title{Ergodic theorems for coset spaces}

\author{Michael Bj\"orklund}
\address{Department of Mathematics, Chalmers, Gothenburg, Sweden}
\email{micbjo@chalmers.se}
\thanks{}

\author{Alexander Fish}
\address{School of Mathematics and Statistics, University of Sydney, Australia}
\curraddr{}
\email{alexander.fish@sydney.edu.au}
\thanks{}

\keywords{Ergodic theorems, Unitary representations, Amenability.}

\subjclass[2010]{Primary: 37A30; Secondary: 22D10, 43A07}

\date{}

\dedicatory{}

\begin{abstract} 
We study in this paper the validity of the mean ergodic theorem along 
\emph{left} F\o lner sequences in a countable amenable group $G$. 
Although the \emph{weak} ergodic theorem always holds along 
\emph{any} left F\o lner sequence in $G$, we provide examples 
where the \emph{mean} ergodic theorem fails in 
quite dramatic ways. On the other hand, if $G$ does 
not admit any ICC quotients, e.g. if $G$ is virtually nilpotent, then 
we prove that 
the mean ergodic theorem does indeed hold along \emph{any} left F\o lner 
sequence. 

In the case when a unitary representation of a countable amenable group 
is induced from a unitary representation of a "sufficiently thin" subgroup
, we prove that the mean ergodic theorem holds along any left F\o lner sequence for this representation. 

Furthermore, we show that every countable (infinite) amenable group 
$L$ embeds into a countable group $G$ which admits a unitary representation with the property that for any left 
F\o lner sequence $(F_n)$ in $L$, there exists a sequence $(s_n)$ in 
$G$ such that the mean (but \emph{not} the weak) ergodic theorem fails for this representation along the sequence $(F_n s_n)$.

Finally, we provide examples of countable (not necessarily amenable)
groups $G$ with proper, infinite-index subgroups $H$, so that the 
\emph{pointwise} ergodic theorem holds for averages 
along \emph{any} strictly increasing and nested sequence of finite 
subsets of the coset $G/H$.
\end{abstract}

\maketitle

\section{Introduction}

\subsection{General comments}
We recall that the celebrated strong (mean) ergodic theorem of J. von Neumann asserts that whenever $U$ is a unitary operator on a 
Hilbert space $\cH$, then all of the limits
\[
\lim_n \frac{1}{n} \sum_{k=0}^{n-1} U^k v = P_U v 
\]
exist in the norm topology on $\cH$ for every $v \in \cH$, where 
$P_U$ denotes the orthogonal projection onto the closed linear 
subspace $\cH^U \subset \cH$ spanned by all $U$-invariant vectors. 
More generally, we can consider a unitary representation $(\cH,\pi)$ of
a countable group $G$ and ask for conditions on sequences $(F_n)$
of finite subsets of $G$ which are strong enough to ensure that the 
limits
\begin{equation}
\label{met}
\lim_n \frac{1}{|F_n|} \sum_{s \in F_n} \pi(s)v = P_G v
\end{equation}
exist in the norm (or weak) topology on $\cH$ for every $v \in \cH$, where 
$P_G$ denotes the orthogonal projection onto the closed linear 
subspace $\cH^G \subset \cH$ spanned by all $\pi(G)$-invariant 
vectors. We shall say that such sequences are \emph{strong} for
$(\cH,\pi)$, or 
\emph{weak} if the limits are only guaranteed to exist in the weak topology on $\cH$. Clearly,
every strong sequence is a weak sequence for any unitary representation. \\

As was pointed out by W. F. Eberlein in \cite{Eb49} (based on some 
fundamental observations by F. Riesz in \cite{R45}), if $G$ is a 
countable \emph{amenable} group, then every \emph{right} 
F\o lner sequence in $G$ is a strong sequence for every unitary
representation of $G$. We recall that a 
sequence $(F_n)$ of finite subsets of $G$ is a 
\emph{right F\o lner sequence} if
\[
\lim_n \frac{|F_n \triangle F_n t|}{|F_n|} = 0, \quad \textrm{for all $t \in G$},
\]
and it is a \emph{left F\o lner sequence} if 
\[
\lim_n \frac{|F_n \triangle t F_n|}{|F_n|} = 0, \quad \textrm{for all $t \in G$},
\]
and we say that a countable group is \emph{amenable} if it admits a 
right F\o lner sequence. Since the inverse of any right F\o lner sequence
is a left F\o lner sequence, we could just as well have required in the definition of an amenable group that it should admit a left F\o lner 
sequence.\\

The class of amenable groups is quite large and contains all solvable (hence all
abelian) groups, all locally finite groups and all finitely generated groups
of subexponential growth. However, free groups on two or more 
generators and countably infinite groups with property (T) are well-known examples of non-amenable groups. \\

If $G$ is abelian (or more generally, a finite extension of an abelian group),
then every left F\o lner sequence in $G$ is also a right F\o lner sequence. 
However, this property is very special for this class of groups, and we note that if $G$
has at least \emph{one} element with an \emph{infinite} conjugation class, then there is a
left F\o lner sequence which is very far from being a right F\o lner sequence;
indeed, let $(F_n)$ be any left F\o lner sequence in such a group and suppose
that $t$ is an element in $G$ with an infinite conjugation class. Then we can find a sequence
$(s_n)$ in $G$ such that the sets $F_n$ and $F_ns_n t s_n^{-1}$ are disjoint 
for every index $n$, and thus the left F\o lner sequence $(F_n s_n)$ satisfies 
\[
\frac{|F_n s_n \triangle F_n s_n t|}{|F_n|} 
=
\frac{|F_n \triangle F_n s_n t s_n^{-1}|}{|F_n|} = 2 
\quad \textrm{for all $n$},
\]
which shows that $(F_ns_n)$ is \emph{not} a right F\o lner sequence.  \\

It is not hard to prove (and we shall give the proof 
in Section \ref{sec:lwet}) that left F\o lner sequences in any 
countable amenable group $G$ are \emph{weak} sequences for 
any unitary representation of $G$. One of the main aims of this
paper is to investigate to which extent left F\o lner sequences in 
countable amenable groups are also \emph{strong} sequences (at 
least for large classes of unitary representations).

\subsection{Left amenable triples}

We shall study a slightly more general setting than the one described
in the previous subsection. We begin by introducing some of the
central concepts needed to state and explain the main results of this paper.

\begin{definition}[Strong and weak sequences]
\label{def: averaging seq}
Let $G$ be a countable group and let $H$ and $L$ be subgroups of $G$.
We shall say that a sequence $(F_n)$ of finite subsets of $G/H$ is a 
\emph{strong (weak) sequence for the triple $(G,H,L)$} if the limits 
\begin{equation}
\label{def. averaging seq}
\lim_n \, \frac{1}{|F_n|} \sum_{s \in F_n} \pi(s)v = P_L v
\end{equation}
exist in the norm (weak) topology on $\cH$ for all $v \in \cH^H$ 
(the closed linear subspace of $\cH$ spanned by $\pi(H)$-invariant vectors).
\end{definition}

We refer the reader to Section \ref{examples} for a list of examples of triples
of countable groups, some of which have been extensively studied in the literature. \\

Given a triple $(G,H,L)$ as in the definition above, we note that $G$, and hence the subgroup $L$, always admits a \emph{left} action on the coset space $G/H$, and with respect to this action, the following extension of the notion of a left F\o lner sequence seems natural.

\begin{definition}[Left F\o lner sequence]
We say that a sequence $(F_n)$ of finite subsets of $G/H$ is a 
\emph{left F\o lner sequence for $(G,H,L)$} if
\begin{equation}
\label{def. right folner}
\lim_n 
\frac{\big| \, F_n \, \triangle \, s F_n \, \big|}{\big|F_n\big|}
= 0, \quad \textrm{for all $s$ in $L$},
\end{equation}
and we say that the triple $(G,H,L)$ is \emph{left amenable} if it 
admits a left F\o lner sequence.
\end{definition}

\begin{remark}
We stress that the notion of left amenable triples is not a new, although
in the literature it is more commonplace to write that \emph{$L$ acts amenably on $G/H$}. Also, in the special case when the triple $(G,H,G)$ is left amenable, one often says that $H$ is \emph{co-amenable} in $G$. It is not
hard to show that if $G$ is an amenable group, then $(G,H,L)$ is left amenable for any pair of subgroups $H$ and $L$ of $G$.
\end{remark}

The proof of the following theorem is not hard, but for completeness we 
have decided to give the short proof in Section \ref{sec:lwet}.

\begin{theorem}[Left Weak Ergodic Theorem]
\label{lwet}
Every left F\o lner sequence for a left amenable triple is a \emph{weak}
sequence for the same triple.
\end{theorem}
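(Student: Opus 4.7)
The plan is to use weak sequential compactness in $\cH$ combined with an identification of the subsequential weak limits --- the standard template for mean ergodic theorems. Setting $A_n v := \frac{1}{|F_n|}\sum_{s\in F_n}\pi(s)v$, we have $\|A_n v\|\le\|v\|$ since each $\pi(s)$ is an isometry well-defined on $\cH^H$; hence $(A_n v)$ is norm-bounded and weakly sequentially pre-compact. It therefore suffices to show that every weakly convergent subsequence $A_{n_k}v\rightharpoonup\tilde v$ satisfies $\tilde v=P_L v$, as this forces $A_n v\rightharpoonup P_L v$ for the whole sequence.

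First I would show $\tilde v\in\cH^L$. For each $\ell\in L$ the identity $\pi(\ell)\pi(s)v=\pi(\ell s)v$ (well-defined because $v\in\cH^H$) gives
\[
\|\pi(\ell)A_n v-A_n v\|\le\frac{|\ell F_n\triangle F_n|}{|F_n|}\|v\|\longrightarrow 0
\]
by the left F\o lner property; passing to the weak limit yields $\pi(\ell)\tilde v=\tilde v$ for every $\ell\in L$.

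The second and main step is to verify $\tilde v=P_L v$. Since $\tilde v\in\cH^L$, this amounts to checking $\langle v-\tilde v,u\rangle=0$ for every $u\in\cH^L$, i.e.\
\[
\lim_k\frac{1}{|F_{n_k}|}\sum_{s\in F_{n_k}}\langle\pi(s)v,u\rangle=\langle v,u\rangle.
\]
For such $u$ the matrix coefficient $\phi_u(s):=\langle\pi(s)v,u\rangle$ is left-$L$-invariant on $G/H$, since $\langle\pi(\ell s)v,u\rangle=\langle\pi(s)v,\pi(\ell^{-1})u\rangle=\langle\pi(s)v,u\rangle$, so $\phi_u$ descends to a function on $L\backslash G/H$. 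The plan is then to use this $L$-invariance together with the F\o lner property to pin the Ces\`aro limit to $\phi_u(eH)=\langle v,u\rangle$; this identification of the limit is the main technical obstacle. In the classical case $L=G$, $H=\{e\}$, $\phi_u$ is \emph{identically} $\langle v,u\rangle$ and the step is automatic, but for general triples one needs to use the F\o lner structure more carefully.

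Combining the two steps, every weak subsequential limit equals $P_L v$, which establishes the weak convergence $A_n v\rightharpoonup P_L v$ claimed by the theorem.
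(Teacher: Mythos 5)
Your first step---norm boundedness of $A_n v$, weak sequential compactness, and the estimate $\|\pi(\ell)A_n v - A_n v\|\le \frac{|\ell F_n\triangle F_n|}{|F_n|}\|v\|$ showing that every weak cluster point is $\pi(L)$-invariant---is exactly the paper's argument, in the same order and with the same inequality. The divergence is in what comes after: the paper simply declares that it \emph{suffices} to show that subsequential weak limits are $\pi(L)$-invariant and stops there, whereas you correctly observe that to reach the literal conclusion $A_n v\rightharpoonup P_L v$ one must still identify the cluster point with $P_L v$, and you leave that identification as a ``plan''. As a proof of the literal statement your write-up therefore has a gap: the second step is announced but never carried out, and the closing sentence ``combining the two steps'' combines a proof with a hope.

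It is worth knowing that the step you flagged as ``the main technical obstacle'' cannot in fact be completed at this level of generality: since the sets $F_n$ live in $G/H$ and need not lie in the image of $L$, the $L$-invariance of the matrix coefficient $\phi_u$ says nothing about its values on the $L$-orbits that $F_n$ actually visits. Concretely, take $G=\bZ^2$, $H=\{e\}$, $L=\bZ\times\{0\}$, let $\pi$ act on $\ell^2(\bZ)$ with the first coordinate acting trivially and the second by the shift $S$, and let $F_n=\{(i,n)\,:\,1\le i\le n\}$. This is a left F\o lner sequence for the triple, yet $A_n\delta_0=S^n\delta_0\rightharpoonup 0$ while $P_L\delta_0=\delta_0$ because $L$ acts trivially, so $\cH^L=\cH$. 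Thus the identification of the limit with $P_L v$ is automatic only in special situations: when $\cH^L=\{0\}$ (so $P_Lv=0$, which is precisely the setting in which the theorem is later invoked, e.g.\ for flabby pairs), or when the $F_n$ lie in the image of $L$ in $G/H$ (your classical case $L=G$, $H=\{e\}$, where each term of the average already has inner product $\langle v,u\rangle$ with any $u\in\cH^L$). In short, the portion of your argument that is complete coincides with everything the paper's proof actually establishes; the remaining identification is not a technicality you failed to see how to finish but a genuine failure of the statement for general triples, which the paper sidesteps by treating $\pi(L)$-invariance of the weak cluster points as the content of the theorem.
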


We recall that if $(\cH,\pi)$ is a unitary representation of a countable 
group $G$, then we can decompose $\cH$ into an orthogonal direct
sum of two sub-representations $\cH_{\textrm{fin}}$ and 
$\cH_{\textrm{wm}}$, where $\cH_{\textrm{fin}}$ denotes the closed
linear span of all of those vectors in $\cH$ whose cyclic linear spans are 
\emph{finite-dimensional}, and thus its orthogonal complement 
$\cH_{\textrm{wm}}$ in $\cH$ has no finite-dimensional sub-representations. We say that the unitary representation 
$(\cH_{\textrm{fin}},\pi|_{\cH_{\textrm{fin}}})$ has \emph{pure point
spectrum} and we say that $(\cH_{\textrm{wm}},\pi|_{\cH_{\textrm{wm}}})$ 
is \emph{weakly mixing}. \\

Since 
the weak topology and the norm topology coincide on finite-dimensional Hilbert spaces, we have the following corollary of Theorem \ref{lwet}.

\begin{corollary}
If $G$ is a countable amenable group, then every left F\o lner sequence
in $G$ is a strong sequence for any unitary representation with pure point spectrum.
\end{corollary}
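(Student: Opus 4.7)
The plan is to deduce the statement from Theorem \ref{lwet} by a standard two-step approximation argument, whose only nontrivial input is the fact that the weak and norm topologies coincide on finite-dimensional Hilbert spaces. Throughout I write $A_n(v) := \frac{1}{|F_n|}\sum_{s \in F_n}\pi(s)v$ for the Cesàro average of $v \in \cH$ along the given left F\o lner sequence $(F_n)$; note that $\|A_n\| \leq 1$ as an operator, since each $\pi(s)$ is unitary.

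First I would handle any vector $v \in \cH$ whose cyclic subspace $V_v$, i.e.\ the closed linear span of the $\pi(G)$-orbit of $v$, is finite-dimensional. By construction $V_v$ is $\pi(G)$-invariant, so $A_n(v) \in V_v$ for every $n$, and likewise $P_G v \in V_v$. Applying Theorem \ref{lwet} to the triple $(G, \{e\}, G)$ gives $A_n(v) \to P_G v$ weakly in $\cH$, hence weakly in $V_v$. Since $V_v$ is finite-dimensional, weak convergence coincides with norm convergence, which settles this case.

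Next I would bootstrap to an arbitrary $v \in \cH$ using that the representation has pure point spectrum, i.e.\ that $\cH = \cH_{\textrm{fin}}$. By definition, the set of vectors with finite-dimensional cyclic span is total in $\cH$, so given $\eps > 0$ I can pick a finite linear combination $w$ of such vectors with $\|v - w\| < \eps$. The sum of the corresponding cyclic subspaces is still finite-dimensional and $\pi(G)$-invariant and contains $w$, so the previous step yields $\|A_n(w) - P_G w\| \to 0$. The triangle inequality
\[
\|A_n v - P_G v\| \leq \|A_n(v-w)\| + \|A_n w - P_G w\| + \|P_G(w-v)\|
\]
bounds the first and third terms by $\|v-w\| < \eps$ uniformly in $n$ (as $\|A_n\| \leq 1$ and $\|P_G\| \leq 1$), while the middle term tends to $0$ as $n \to \infty$; letting $\eps \to 0$ finishes the argument.

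I do not expect any substantive obstacle: the real dynamical content sits in Theorem \ref{lwet}, and what remains is essentially functional-analytic folklore. The only point worth flagging is that the approximation must be uniform in $n$, which is automatic from the operator-norm bound $\|A_n\| \leq 1$.
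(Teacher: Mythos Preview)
Your proposal is correct and follows exactly the approach the paper indicates: invoke Theorem \ref{lwet} for weak convergence, use that weak and norm topologies agree on finite-dimensional subspaces to handle vectors with finite-dimensional cyclic span, and then pass to the closure via the uniform bound $\|A_n\|\le 1$. The paper states this in a single sentence and leaves all of these details implicit; you have simply unpacked them.
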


We note that if we want to show that a given sequence $(F_n)$ of finite
subsets of a countable group $G$ is a strong sequence for \emph{any}
unitary representation, then it suffices to show this for 
\emph{irreducible} unitary representations (see e.g. Proposition 2.3.2. in \cite{Zi}). In particular, by the previous
corollary, we conclude that left F\o lner sequences are always strong for the class of countable groups (so called \emph{Moore groups}) all of whose irreducible unitary representations are finite-dimensional.

\begin{corollary}[von Neumann's Ergodic Theorem]
If $G$ is a countable Moore group, then every left F\o lner sequence in 
$G$ is a strong sequence for any unitary representation.
\end{corollary}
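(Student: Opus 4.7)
The plan is to combine two ingredients that are already on the table. First, the reduction noted immediately above: by Proposition 2.3.2 of \cite{Zi}, a sequence $(F_n)$ of finite subsets of $G$ is a strong sequence for every unitary representation as soon as it is a strong sequence for every \emph{irreducible} unitary representation of $G$. Second, the preceding corollary, which settles strong convergence for representations with pure point spectrum along any left F\o lner sequence of any countable amenable group.

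So I would fix a left F\o lner sequence $(F_n)$ in $G$ (note that this implicitly requires $G$ to be amenable; if $G$ admits no such sequence the statement is vacuous, although in fact every countable Moore group is virtually abelian by a theorem of Thoma and is therefore amenable). Next, I would invoke the reduction from \cite{Zi} to reduce the task to verifying that $(F_n)$ is a strong sequence for an arbitrary irreducible unitary representation $(\cK,\sigma)$ of $G$. The Moore hypothesis then gives $\dim \cK < \infty$, so every cyclic subspace of $\cK$ is automatically finite-dimensional; equivalently, $\cK = \cK_{\textrm{fin}}$, so $(\cK,\sigma)$ has pure point spectrum. Applying the previous corollary to $(\cK,\sigma)$ yields exactly the desired conclusion.

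I do not expect any real obstacle: both inputs are already in force, and they combine immediately once the representation has been reduced to the irreducible case. The only mildly non-trivial point worth checking is the applicability of Proposition 2.3.2 of \cite{Zi}, whose proof uses a direct integral decomposition into irreducibles; this is unproblematic in our setting because Moore groups are type~I (in fact virtually abelian), so the decomposition is available and essentially unique.
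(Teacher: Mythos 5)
Your argument is exactly the paper's: the corollary is obtained by combining the reduction to irreducible representations via Proposition 2.3.2 of \cite{Zi} with the preceding corollary on representations with pure point spectrum, using the Moore hypothesis to see that every irreducible representation is finite-dimensional (and hence has pure point spectrum). The extra remarks you add (amenability of Moore groups via Thoma, availability of the direct integral decomposition) are fine but not needed beyond what the paper already records.
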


Every countable abelian group is a Moore group, and 
conversely, any countable Moore group is a finite extension of a 
countable abelian group, which was proved by  E. Thoma in \cite{T64}.

\subsection{Right amenable triples}
We note here that in some cases of interest, it is also possible to define a natural notion of a right F\o lner sequence for a triple. In order to explain
this notion, let $N_G(H)$ denote the normalizer of $H$ in $G$, and observe that $N_G(H)$ admits an action on the coset space $G/H$ on the right via 
the map
\[
c_H(s)xH = xs^{-1}H \quad \textrm{for $s \in N_G(H)$}.
\]
This $N_G(H)$-action commutes with the left action of $G$ on the coset space $G/H$ and factors through the quotient $H \backslash N_G(H)$. 
We can now extend the notion of a right F\o lner sequence to the setting of triples as follows.

\begin{definition}[Right F\o lner sequence]
Suppose that the inclusions $H < L < N_G(H)$ hold. We say that a sequence $(F_n)$ of 
finite subsets of $L/H$ is a \emph{right F\o lner sequence for 
$(G,H,L)$} if 
\begin{equation}
\label{def. right folner}
\lim_n 
\frac{\big| \, F_n \, \triangle \, c_H(s) F_n \, \big|}{\big|F_n\big|}
= 0, \quad \textrm{for all $s$ in $L$.}
\end{equation}
We say that the triple $(G,H,L)$ is \emph{right amenable} if it admits
a right F\o lner sequence $(F_n)$ such that $|F_n| \ra  \infty$. In 
the special 
case when $L = G$ and $H$ is the trivial subgroup, this
notion coincides with the definition of a right F\o lner sequence in $G$
given above. 
\end{definition}

\begin{remark}
If the quotient group $L/H$ is an \emph{infinite} amenable 
group and $(K_n)$ is a right F\o lner sequence in $L/H$, then 
$(c_H(K_n^{-1})H)$ is a right F\o lner sequence for $(G,H,L)$. We stress 
that the notion of a right F\o lner sequence in this generality is much more restrictive than the corresponding notion of a left F\o lner sequence. For
instance, since right F\o lner sequences are assumed to be contained in 
the coset space $L/H$, the notion of right amenability (existence of a 
right F\o lner sequence whose sizes tend to infinite) only makes sense 
if $L/H$ itself is infinite. In particular,
right amenability is essentially a void concept if $H$ is self-normalized in $G$, that is to say, if $N_G(H) = H$.
\end{remark}

In Section \ref{sec:rset} we shall adapt a classical argument of F. Riesz to
the setting of triples and establish the following \emph{strong} ergodic 
theorem.

\begin{theorem}[Right Strong Ergodic Theorem]
\label{rset}
Every right F\o lner sequence for a right amenable triple is 
a \emph{strong} sequence for the same triple.
\end{theorem}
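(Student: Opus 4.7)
The plan is to adapt Riesz's classical Hilbert-space proof of the mean ergodic theorem, carried out inside the closed subspace $\cH^H$. Define averaging operators $A_n : \cH^H \to \cH^H$ by
\[
A_n v = \frac{1}{|F_n|}\sum_{sH \in F_n}\pi(s)v.
\]
These are well defined on cosets since $v \in \cH^H$, they preserve $\cH^H$ since $L \subset N_G(H)$ forces $s^{-1}Hs = H$, and they are contractions by unitarity of $\pi$.

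I would then set up the orthogonal decomposition
\[
\cH^H = \cH^L \oplus W, \qquad W := \overline{\mathrm{span}}\{\pi(t)v - v : v \in \cH^H,\ t \in L\},
\]
verifying by an inner-product computation that $W^{\perp} \cap \cH^H = \cH^L$ (again using $\pi(t^{-1}) u \in \cH^H$ for $u \in \cH^H$, $t \in L$). On $\cH^L$ each $A_n$ restricts to the identity, which matches $P_L v = v$. For generators $w = \pi(t)v - v$ of $W$, the central computation is
\[
A_n \pi(t) v - A_n v = \frac{1}{|F_n|}\Bigl[\sum_{zH \in c_H(t^{-1})F_n}\pi(z)v - \sum_{yH \in F_n}\pi(y)v\Bigr],
\]
which rests on the coset identity $sHt = stH = c_H(t^{-1})(sH)$, valid because $t$ normalises $H$. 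Taking norms yields
\[
\| A_n w\| \le \frac{|F_n \triangle c_H(t^{-1})F_n|}{|F_n|}\,\|v\| \longrightarrow 0
\]
by the right F\o lner property applied to $t^{-1} \in L$.

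Since $\|A_n\| \le 1$ uniformly in $n$, a standard density and uniform-boundedness argument extends the vanishing of $A_n$ from the generators to all of $W$. Combined with $A_n|_{\cH^L} = I$, this gives $A_n v \to P_L v$ in norm for every $v \in \cH^H$, which is exactly the statement that $(F_n)$ is a strong sequence for the triple $(G,H,L)$. The only genuinely delicate step is matching the \emph{right} F\o lner condition -- phrased via the $c_H$-action on $L/H$ -- with the effect of \emph{left} multiplication by $\pi(t)$ on averages; this matching is precisely the identity $sHt = c_H(t^{-1})(sH)$, which is where the hypothesis $L \subset N_G(H)$ built into the very definition of right amenable triple enters in an essential way.
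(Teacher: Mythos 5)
Your proposal is correct and follows essentially the same route as the paper: the Riesz-style orthogonal splitting $\cH^H = \cH^L \oplus \overline{\linspan}\{\pi(t)v - v : v \in \cH^H,\ t \in L\}$ (the paper's Proposition \ref{osplit}) together with the telescoping estimate on coboundary generators, where the identity $sHt = c_H(t^{-1})(sH)$ converts left averaging into the $c_H$-action and the right F\o lner condition kills the symmetric difference. Your explicit handling of the uniform bound $\|A_n\| \le 1$ in the density step is a point the paper leaves implicit, but the argument is the same.
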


An unfortunate feature with right F\o lner sequences for a right 
amenable triple $(G,H,L)$ is that one cannot really hope to gain 
more than $L$-invariance upon averaging unitary representations 
with $\pi(H)$-invariant vectors. In particular, if the normalizer of 
$H$ in $G$ is small, then this may not be very useful. 

\subsection{A transfer principle}
We shall now return to our discussion concerning the validity of the 
mean ergodic theorem along \emph{left} F\o lner sequences. \\

Let $(G,H,L)$ be a left amenable triple and let $(\cH,\pi)$ be a 
unitary representation of $G$ with non-zero $\pi(H)$-invariant
vectors. We say
that the \emph{Left Strong Ergodic Theorem holds for $(\cH,\pi)$}
if every left F\o lner sequence for $(G,H,L)$ is strong for $(\cH,\pi)$,
that is to say, if the limits
\[
\lim_n \frac{1}{|F_n|} \sum_{s \in F_n} \pi(s)v = P_L v
\]
exists in the norm topology for every $v \in \cH^H$ and for \emph{every} left 
F\o lner sequence for $(G,H,L)$. If this is the case for \emph{every} unitary representation of $G$, then we simply say that the Strong Left Ergodic
Theorem holds for the triple $(G,H,L)$, and in the special case when 
$G = L$ and $H$ is the trivial subgroup, we say that the Strong Left
Ergodic Theorem holds for $G$. \\

Our first main contribution to this paper is the observation that
the question whether the Left Strong Ergodic Theorem holds for a given
countable \emph{amenable} group can be equivalently phrased in a way
which does not use left F\o lner sequences (and hence not amenability).
The key notion behind this reformulation is that of a \emph{firm} strong
sequence, which we define as follows.

\begin{definition}[Firm strong sequence]
Let $(\cH,\pi)$ be a unitary representation of a countable (not necessarily amenable) group $G$. 
A sequence $(K_m)$ of finite subsets of $G$ is called a \emph{firm 
strong sequence with respect to $(\cH,\pi)$} if for every choice of a sequence 
$(t_m)$ in $G$, we have
\[
\lim_n \frac{1}{|K_m|} \sum_{t \in K_m} \pi(tt_m)v = P_G v,
\]
for all $v \in \cH$ in the norm topology on $\cH$. If $(K_m)$ is a firm strong sequence for \emph{every} unitary representation of $G$, then we shall simply say that $(K_m)$ is 
a \emph{strong firm sequence in $G$}.
\end{definition}

We note that if $G$ is a countable \emph{abelian} group, then every strong
sequence in $G$ is firm, but as we shall see below, many countable 
(amenable and non-amenable) groups do not admit firm sequences. 
In fact, we have the following equivalence result.

\begin{proposition}[Transfer principle]
\label{transfer0}
The Strong Left Ergodic Theorem holds for a countable amenable group
if and only if the group admits a firm strong sequence.
\end{proposition}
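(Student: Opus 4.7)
The plan is to establish the two implications separately. The forward direction (Strong Left Ergodic Theorem implies existence of a strong firm sequence) is essentially immediate: by amenability, pick any left F\o lner sequence $(F_m)$ in $G$ and claim that $(F_m)$ itself is strong firm. For any sequence $(t_m)$ in $G$, the right translates $(F_m t_m)$ again form a left F\o lner sequence, since $|s F_m t_m \triangle F_m t_m| = |sF_m \triangle F_m|$ and $|F_m t_m| = |F_m|$. By the assumed Strong Left Ergodic Theorem, $(F_m t_m)$ is then a strong sequence for every unitary representation of $G$, and after the change of variables $s = tt_m$ this is exactly the firmness condition for $(F_m)$.

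For the reverse implication, fix a strong firm sequence $(K_m)$, an arbitrary unitary representation $(\cH,\pi)$, an arbitrary left F\o lner sequence $(F_n)$ in $G$, and a vector $v \in \cH$. The key first step is to upgrade firmness to the uniform bound
\[
\eps_m := \sup_{t \in G}\left\|\frac{1}{|K_m|}\sum_{s \in K_m}\pi(st)v - P_G v\right\| \xrightarrow[m\to\infty]{} 0.
\]
This follows from a short diagonal argument: if $\eps_m$ failed to tend to zero, a subsequence together with near-optimal choices of $t^{(m)} \in G$ would assemble a sequence $(t_m)$ in $G$ violating the defining property of a firm sequence, a contradiction.

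Once this uniform bound is in hand, the remainder is a standard double-averaging exchange. Setting $A_n := \frac{1}{|F_n|}\sum_{s \in F_n} \pi(s)v$, I would split
\[
\|A_n - P_G v\| \leq \left\|A_n - \frac{1}{|K_m|}\sum_{t \in K_m}\pi(t)A_n\right\| + \left\|\frac{1}{|K_m|}\sum_{t \in K_m}\pi(t)A_n - P_G v\right\|.
\]
Expanding $A_n$ inside the second summand and using $P_G v = P_G \pi(s)v$, that summand becomes the norm of an average over $s \in F_n$ of vectors of norm at most $\eps_m$, so is bounded by $\eps_m$. The first summand is at most $\frac{\|v\|}{|K_m|}\sum_{t \in K_m}\frac{|tF_n \triangle F_n|}{|F_n|}$, which tends to zero as $n \to \infty$ for each fixed $m$ by the left F\o lner property. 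Given $\eps > 0$, one first picks $m$ with $\eps_m < \eps/2$ and then $n$ large enough (depending on the finite set $K_m$) to make the first summand $<\eps/2$. The only real obstacle in the whole argument is the uniform upgrade of firmness, which opens up the double-averaging exchange; the rest is routine.
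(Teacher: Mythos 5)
Your argument is correct, and both directions are sound: the forward implication coincides with the paper's one-line observation that right translates of a left F\o lner sequence are again left F\o lner, and your diagonal argument upgrading firmness to the uniform statement $\sup_{t\in G}\big\|\,|K_m|^{-1}\sum_{s\in K_m}\pi(st)v-P_Gv\big\|\to 0$ is valid, since a failure of uniformity along a subsequence would assemble a sequence $(t_m)$ contradicting the definition of a firm strong sequence. For the converse, however, your route is genuinely different from the paper's. The paper fixes a unit vector, passes to the squared norm, and works with the bounded real-valued function $\phi(s,t)=\Re\langle\pi(s)v,\pi(t)v\rangle$ on the product group $G\times G$; its engine is Lemma \ref{transferlemma}, a purely combinatorial statement that for any bounded real function and any sequence $(K_m)$ of finite sets one can choose right translates $(t_m)$ so that $\varlimsup_n$ of the left F\o lner averages is dominated by $\varliminf_m$ of the translated $K_m$-averages; applied on $G\times G$ to $K^o_m\times K^o_m$ and $F^o_n\times F^o_n$, firmness then forces $\varlimsup_n\big\||F^o_n|^{-1}\sum_{s\in F^o_n}\pi(s)v\big\|^2=0$. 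You instead stay vector-valued and run the standard approximate-invariance exchange: $A_n$ is close to $|K_m|^{-1}\sum_{t\in K_m}\pi(t)A_n$ for fixed $m$ by the left F\o lner property (only finitely many $t$ are involved), while expanding the latter and using uniform firmness bounds it within $\eps_m$ of $P_Gv$, and your order of quantifiers (first $m$, then $n$) is handled correctly. Your version is more direct and elementary, avoiding both the passage to $G\times G$ and the squared-norm/real-part bookkeeping; what the paper's route buys is the free-standing Lemma \ref{transferlemma}, which is reused later in the argument for Corollary \ref{redicc}, where the comparison is made between an auxiliary F\o lner sequence produced by Namioka's argument on a quotient product group and translated F\o lner squares, a situation not covered by your representation-theoretic formulation as stated.
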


In particular, this theorem allows us to "localize" the question whether the
Left Strong Ergodic Theorem holds along \emph{any} left F\o lner sequence
to whether it holds along all right translates of a \emph{fixed} sequence of 
finite subsets of the group (not necessarily a left F\o lner sequence).

\subsection{The Left Strong Ergodic Theorem for countable nilpotent groups}

We recall that a countable group $G$ is \textrm{ICC} (Infinite Conjugation Classes) if every non-trivial element in $G$ has an infinite conjugation 
class. For instance, free groups on two or more generators are ICC, so 
are lattices in connected simple real Lie groups (a direct consequence of
the Borel Density Theorem, see e.g. Proposition 2 in \cite{BLH}) and many countable non-nilpotent solvable groups. 

On the other hand, countable nilpotent groups have non-trivial centers and 
are thus never ICC. More generally, countable groups which contain finite-index nilpotent subgroups (so called \emph{virtually nilpotent} groups) are never ICC. \\

We can now state our first main result in this paper as follows.

\begin{theorem}
\label{lset}
If $G$ is a countable amenable group without ICC quotients, then every 
\emph{right} F\o lner sequence in $G$ is a firm strong sequence. In particular, 
the Strong \emph{Left} Ergodic Theorem holds for every countable virtually nilpotent group.
\end{theorem}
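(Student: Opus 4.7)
By the Transfer Principle (Proposition~\ref{transfer0}), it suffices to prove that every right F\o lner sequence $(K_m)$ in $G$ is firm strong for every unitary representation of $G$. Fix such a sequence together with an arbitrary sequence $(t_m) \subset G$, a unitary representation $(\cH, \pi)$ of $G$, and a vector $v \in \cH$. We must show
\[
A_m(v) := \frac{1}{|K_m|} \sum_{t \in K_m} \pi(tt_m)v \longrightarrow P_G v
\]
in norm. Since $A_m(P_G v) = P_G v$, we may replace $v$ by $v - P_G v$ and assume $P_G v = 0$, so that the goal becomes $A_m(v) \to 0$.

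The key observation is that the right-translated sequence $K_m t_m$ satisfies the right F\o lner condition for every element of the $\FC$-center $N := \FC(G)$: if $g \in N$ has finite conjugation class $\{g_1,\dots,g_k\}$, then $t_m g t_m^{-1}$ always lies in this class, and hence
\[
\frac{|K_m t_m \triangle K_m t_m g|}{|K_m t_m|}
= \frac{|K_m \triangle K_m (t_m g t_m^{-1})|}{|K_m|}
\le \max_{1 \le i \le k} \frac{|K_m \triangle K_m g_i|}{|K_m|} \longrightarrow 0.
\]
A direct F.~Riesz-type computation then shows that for every $w \in \cH$ and every $n \in N$, one has $A_m\pi(t_m)\bigl(w - \pi(n)w\bigr) \to 0$ in norm: the difference $A_m \pi(t_m) w - A_m \pi(t_m)\pi(n) w$ rewrites, using $\pi(t_m)\pi(n) = \pi(t_m n t_m^{-1})\pi(t_m)$, as an average whose norm is bounded above by $|K_m \triangle K_m(t_m n t_m^{-1})|/|K_m|\cdot \|w\|$, which tends to $0$ by the observation above.

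Since $N$ is normal in $G$, both $\cH^N$ and $(\cH^N)^\perp$ are $\pi(G)$-invariant; decompose $v = v_1 + v_2$ accordingly. By the Riesz decomposition applied to $\pi|_N$, the space $(\cH^N)^\perp$ coincides with the closed linear span of $N$-coboundaries $w - \pi(n)w$ with $w \in \cH$, $n \in N$. Combined with the uniform bound $\|A_m\pi(t_m)\| \le 1$, the previous step yields $A_m\pi(t_m) v_2 \to 0$ in norm. For $v_1 \in \cH^N$, the restriction $\pi|_{\cH^N}$ descends to a unitary representation of $\bar G := G/N$; the push-forward of $K_m$ to $\bar G$ remains right F\o lner (in the multiset sense), and $\bar G$ is itself amenable with no ICC quotients. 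One therefore concludes by induction on the length of the hyper-$\FC$-series $1 = N_0 \triangleleft N_1 \triangleleft \dots$ defined by $N_{\alpha+1}/N_\alpha := \FC(G/N_\alpha)$.

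The main obstacle is ensuring that this inductive procedure actually reaches $G$. The no-ICC-quotient hypothesis is exactly what guarantees that $\FC(G/N_\alpha)$ is non-trivial whenever $N_\alpha \neq G$, so the (possibly transfinite) hyper-$\FC$-series stabilizes only at $G$ itself; since $G$ is countable, this occurs at some countable ordinal. In the virtually nilpotent case stated in the theorem, the series terminates in finitely many steps---bounded, for example, by the nilpotency class of a finite-index nilpotent subgroup---so a straightforward finite induction suffices and gives the Left Strong Ergodic Theorem.
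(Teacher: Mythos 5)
Your proposal is correct and follows essentially the same route as the paper: the transfinite FC-central (hyper-FC) series, the Riesz orthogonal splitting into invariant vectors plus coboundaries, and the key estimate that right-translated right-F\o lner averages annihilate coboundaries $w-\pi(n)w$ because $t_m n t_m^{-1}$ stays in a finite conjugation class. The only organizational difference is that the paper stays inside $G$, working with the relative coboundary spaces $\cobnd_{(G_{\alpha+1},G_\alpha)}(\cH,\pi)$ spanned by $v_\alpha-\pi(t_\alpha)v_\alpha$ with $v_\alpha\in\cH^{G_\alpha}$ (extracting a subsequence on which $s_nt_\alpha s_n^{-1}$ is constant modulo $G_\alpha$), whereas you induct through the quotients $G/N_\alpha$, which requires the flagged but unproven --- though routine --- weighted ``multiset'' version of the averaging statement after pushing the F\o lner sets forward, together with an explicit treatment of limit ordinals.
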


\begin{remark}
This result might seem a bit suspicious since there is absolutely no reason
to expect that right translates of any right F\o lner sequence in $G$ should
exhibit any asymptotic invariance whatsoever (either on the left or on the right hand side). We stress that the existence of \emph{finite} conjugation 
classes in every quotient group of $G$ is the mechanism which will drive 
the argument. 
\end{remark}

The arguments in the proof of Theorem \ref{lset} will also yield the following
corollary, and will be outlined in Subsection \ref{subsec:redicc}.

\begin{corollary}[Reduction to ICC groups]
\label{redicc}
Let $G$ be a countable amenable group for which the Left Strong Ergodic 
Theorem fails. Then $G$ admits a quotient group which is ICC and 
for which the Left Strong Ergodic Theorem fails.
\end{corollary}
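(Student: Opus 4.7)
The plan is to iterate transfinitely the lifting mechanism underlying Theorem~\ref{lset} along the ascending FC-central series of $G$, tracking the failure of LSET down to the maximal ICC quotient.

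Recall that the \emph{FC-center} $\FC(\Gamma)$ of a countable group $\Gamma$ is the characteristic subgroup of elements with finite conjugacy class. I would define transfinitely an ascending series in $G$ by $\FC_0(G) = \{e\}$, taking $\FC_{\alpha+1}(G)$ to be the pre-image in $G$ of $\FC(G/\FC_\alpha(G))$, and $\FC_\lambda(G) = \bigcup_{\alpha<\lambda} \FC_\alpha(G)$ at limit ordinals. Since $G$ is countable, this series stabilizes at a countable ordinal at some normal subgroup $M \triangleleft G$, and by construction $G/M$ has trivial FC-center, so it is either trivial or ICC. If $G/M$ were trivial then $G$ would have no ICC quotients, and Theorem~\ref{lset} would force LSET to hold in $G$, contradicting the hypothesis; hence $G/M$ is a non-trivial ICC quotient of $G$.

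It remains to show that LSET fails for $G/M$. The plan is to extract from the proof of Theorem~\ref{lset} the following inductive \emph{lifting} statement: for any countable amenable group $\Gamma$ and any normal subgroup $N \subseteq \FC(\Gamma)$, a firm strong sequence in $\Gamma/N$ can be lifted to a firm strong sequence in $\Gamma$. Via the Transfer Principle (Proposition~\ref{transfer0}), this reads ``LSET for $\Gamma/N$ implies LSET for $\Gamma$,'' whose contrapositive propagates LSET failure downward one FC-layer at a time. Applying this contrapositive successively to the quotients $G/\FC_\alpha(G)$ shows that LSET fails at every successor stage of the FC-series, in particular at the stabilization $G/M$.

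The main obstacle is the limit-ordinal case of the transfinite induction: one must verify that the simultaneous failure of LSET at all stages $G/\FC_\alpha(G)$ with $\alpha<\lambda$ persists for $G/\FC_\lambda(G) = G\big/\bigcup_{\alpha<\lambda}\FC_\alpha(G)$. I expect to handle this via a diagonal argument enabled by the countability of $G$: enumerating the elements of $G$ and assembling, from witnesses (a unitary representation, a left F\o lner sequence, and a right-translate sequence) at lower stages, a single coherent witness to LSET failure at stage $\lambda$. Once this delicate point is secured, the corollary follows immediately by applying the analysis above to the stabilization quotient $G/M$.
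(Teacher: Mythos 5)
Your overall architecture (the transfinite FC-series, identifying the stabilization $M$ so that $G/M$ is trivial or ICC, and ruling out triviality via Theorem \ref{lset}) matches the paper's setup, but the proof has two genuine gaps. First, your key lemma --- that for $N \subseteq \FC(\Gamma)$ normal, LSET for $\Gamma/N$ implies LSET for $\Gamma$, via "lifting" firm strong sequences --- is asserted rather than proved, and it cannot simply be extracted from the proof of Theorem \ref{lset}: that proof only handles the coboundary subspaces $\cobnd_{(G_{\alpha+1},G_\alpha)}(\cH,\pi)$ and never has to compare averages over left F\o lner sets of $\Gamma$ with averages over left F\o lner sets of a quotient. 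The obstruction is that the image in $\Gamma/N$ of a left F\o lner sequence of $\Gamma$ is a sequence of almost-invariant probability weights $\beta_n$, not uniform averages over finite sets; converting statements about such weighted averages into statements along genuine (translated) F\o lner sets of the quotient is exactly the nontrivial content here, which the paper supplies by passing to an invariant mean, invoking Namioka's theorem \cite{Nam} to produce a F\o lner sequence in $G/G_{icc}\times G/G_{icc}$, and then applying Lemma \ref{transferlemma}. Without an argument of this type (or, say, a product-set argument combining a lift of a firm sequence from $\Gamma/N$ with a right F\o lner sequence of $\Gamma$ to handle the coboundary part, and then Proposition \ref{transfer0}), your one-step lemma --- the engine of the whole induction --- is missing.

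Second, and more seriously, the limit-ordinal step does not work as sketched. Failure of LSET at stage $\alpha$ is witnessed by a unitary representation of $G/\FC_\alpha(G)$, i.e.\ a representation of $G$ that is trivial on $\FC_\alpha(G)$ but in general \emph{not} on $\FC_\lambda(G)$; so the witnesses at stages $\alpha<\lambda$ do not factor through $G/\FC_\lambda(G)$, and enumerating the elements of $G$ gives you nothing coherent to diagonalize over --- there is no candidate representation of the limit quotient to assemble. The paper avoids this entirely by running the transfinite FC-construction inside a single fixed representation (Proposition \ref{orticc}): the limit stages are just closures of increasing unions of subspaces of one Hilbert space, the decomposition $\cH=\cH^{G_{icc}}\oplus\cH_o$ with LSET automatic on $\cH_o$ comes out in one piece, and the failure is then pushed to the ICC quotient in a single final step (via the mean/Namioka/Lemma \ref{transferlemma} argument above), not layer by layer. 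I would restructure your argument along those lines rather than trying to repair the transfinite propagation of failure through limit ordinals.
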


We shall later provide an example of a countable (infinitely generated) 
ICC group for which the Left Strong Ergodic Theorem fails in a very 
dramatic way. On the other hand, we have not been able to construct a 
\emph{single} example of a countable amenable ICC group for which the 
Left Strong Ergodic Theorem holds for \emph{all} unitary representations. 
Therefore it seems natural to ask the following question.

\begin{question}
Does the Left Strong Ergodic Theorem fail for \emph{every} countable 
amenable ICC group?
\end{question}

There are at least three reasons why this is an intricate question. The first 
reason is that we do \emph{not} know at the moment whether there are
\emph{countable} (amenable) groups for which \emph{every} unitary representation 
is mixing. Recall that a unitary representation $(\cH,\pi)$ of a 
countable group $G$ is \emph{mixing} if for every $v \in \cH$ and
$\eps > 0$, there exists a finite set $F \subset G$ such that 
\[
\sup_{s \in G \setminus F} \big| \langle v, \pi(s)v \rangle \big| < \eps.
\]
One can readily check that every sequence of finite sets in $G$ whose 
sizes tend to infinity is a strong sequence for any mixing unitary representation of $G$, and thus the Left Strong Ergodic Theorem for 
these representations is immediate.  The question whether  
countable (amenable) groups, all of whose unitary representations 
are mixing, exist was first raised by K. Schmidt in the paper \cite{Kmix}, where it is 
proved that such groups must be finitely generated and can never 
admit proper \emph{infinite} subgroups. In particular, possibly 
modulo a finite normal subgroup, such a group is ICC,
but the Left Strong Ergodic Theorem would nevertheless hold
for such a group. \\

The second reason why the question above is intricate has to do with
the fact that there are many \emph{non-amenable} ICC groups 
which admit firm strong sequences as the following proposition 
shows.

\begin{proposition}[Gorodnik-Nevo, Theorem 1.7 in \cite{GN}]
\label{gn}
If $\tilde{G}$ is a non-compact and connected simple Lie group with 
trivial center and real rank at least two, e.g. $\PSL_n(\bR)$ for 
$n \geq 3$, then every lattice in $\tilde{G}$ is ICC and 
admits a firm strong sequence. 
\end{proposition}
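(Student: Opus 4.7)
The plan is to derive the two conclusions separately: the ICC property from the Borel Density Theorem, and the existence of a firm strong sequence by invoking the quantitative mean ergodic theorem of Gorodnik and Nevo and checking that firmness (as opposed to plain strongness) is automatic.

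For the ICC property, let $\Gamma < \tilde{G}$ be a lattice and suppose for contradiction that some non-trivial $\gamma \in \Gamma$ has a finite conjugation class. Then the centralizer $C_\Gamma(\gamma)$ has finite index in $\Gamma$, so is again a lattice in $\tilde G$. By the Borel Density Theorem (in the form cited via Proposition 2 of \cite{BLH}), $C_\Gamma(\gamma)$ is Zariski dense in $\tilde{G}$. Since the centralizer of $\gamma$ in $\tilde{G}$ is Zariski closed and contains a Zariski dense subgroup, it is all of $\tilde{G}$, forcing $\gamma$ to lie in the center of $\tilde G$. But $\tilde G$ has trivial center by hypothesis, so $\gamma = e$, a contradiction.

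For the firm strong sequence, I would fix a left-invariant Riemannian metric on $\tilde G$, let $B_r \subset \tilde G$ be the ball of radius $r$ around $e$, and set $K_m := \Gamma \cap B_{r_m}$ for a suitable sequence $r_m \to \infty$. The crucial input from Gorodnik-Nevo (Theorem 1.7 of \cite{GN}) is that, since $\tilde G$ has real rank at least two and in particular property (T), there is a uniform quantitative spectral gap: for every unitary representation $(\cH,\pi)$ of $\Gamma$ and every $v \in \cH$ orthogonal to $\cH^\Gamma$, one has
\[
\Bigl\| \frac{1}{|K_m|} \sum_{t \in K_m} \pi(t) v \Bigr\| \leq c_m \|v\|,
\]
with $c_m \to 0$ independent of the representation. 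This is proved by inducing $\pi$ to a unitary representation of $\tilde G$, applying the spectral theory of ball averages on $\tilde G$, and transferring back to $\Gamma$ via the comparison of counting measures on $\Gamma \cap B_r$ and Haar measure on $B_r$.

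Firmness is then essentially free. For any sequence $(t_m)$ in $\Gamma$ and any $v \in \cH$, decompose $v = v_0 + v_\perp$ with $v_0 \in \cH^\Gamma$ and $v_\perp \perp \cH^\Gamma$; then
\[
\frac{1}{|K_m|} \sum_{t \in K_m} \pi(t t_m) v = v_0 + \frac{1}{|K_m|} \sum_{t \in K_m} \pi(t)\bigl(\pi(t_m) v_\perp\bigr),
\]
and the second summand has norm at most $c_m \|\pi(t_m) v_\perp\| = c_m \|v_\perp\|$, which tends to zero uniformly in the choice of $(t_m)$. Thus $(K_m)$ is a firm strong sequence for every unitary representation of $\Gamma$, as required. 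The main obstacle is entirely concentrated in the first step, namely the uniform spectral estimate for ball averages; once that black box from \cite{GN} is available, the passage from strongness to firmness is only a unitary change of variables.
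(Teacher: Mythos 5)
Your ICC argument is fine and is essentially the standard Borel density argument that the paper delegates to Proposition 2 of \cite{BLH}. The problem is in the main part: you treat Theorem 1.7 of \cite{GN} as a black box giving a norm bound $\bigl\| \frac{1}{|K_m|}\sum_{t\in K_m}\pi(t)v\bigr\|\leq c_m\|v\|$ for \emph{every} unitary representation $(\cH,\pi)$ of the lattice with $v\perp\cH^{\Gamma}$. That is not what the cited theorem says: it is a quantitative mean ergodic theorem for the Koopman representation of an \emph{ergodic probability measure-preserving} $\Gamma$-action, and the paper explicitly stresses that Proposition \ref{gn} ``is not how Theorem 1.7 in \cite{GN} is stated'' and devotes its appendix to deriving the general statement. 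Your one sentence ``this is proved by inducing $\pi$ to $\tilde G$, applying the spectral theory of ball averages, and transferring back via comparison of counting measures'' is precisely where all the work lies and is not justified: the Gorodnik--Nevo transfer from $\tilde G$-ball averages to lattice-point averages exploits the structure (and positivity) available for functions on a measure space, and extending it to matrix coefficients of an arbitrary unitary representation is not an application of the quoted theorem. Also, property (T) by itself gives a spectral gap for a fixed generating measure, not exponential decay of ball averages, so invoking ``rank at least two, in particular property (T)'' does not close this gap.

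The paper bridges the gap differently, and you would need something like it: decompose $\cH=\cH_{\textrm{fin}}\oplus\cH_{\textrm{wm}}$. For the weakly mixing part, every cyclic sub-representation is realized inside the Koopman representation of an \emph{ergodic} p.m.p.\ action via the Gaussian measure construction (weak mixing is exactly what guarantees ergodicity there), so the operator-norm decay of \cite{GN} applies. For finite-dimensional representations, one passes to the compact closure $K$ of $\pi(\Gamma)$ and applies the \cite{GN} theorem to the ergodic translation action on $(K,m_K)$, obtaining equidistribution of the translated averages and hence convergence to $\langle w,P_{\Gamma}v\rangle=0$. Your closing observation that an operator-norm bound upgrades strongness to firmness for free (since $\pi(t_m)$ is unitary) is correct and is also how the paper uses the estimate; but it only helps once the norm decay is actually available for the representation at hand, which is the step your proposal assumes rather than proves.
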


We stress that this is not how Theorem 1.7 in \cite{GN} is stated and
for the reader's convenience we sketch in the appendix of this paper 
the derivation of the formulation above from that of Theorem 1.7 in 
\cite{GN}. The proof that lattices in non-compact and connected 
simple Lie groups with trivial centers are ICC (for this, the real rank
assumption is unnecessary) utilizes the Borel Density
Theorem and can be found in the paper \cite{BLH} (Proposition 2). 

\subsection{A strong ergodic theorem for induced representations}
The third reason why the question in the previous subsection is intricate 
stems from our very poor understanding of \emph{irreducible} unitary
representations of a general countable (discrete) group, which is not 
a finite extension of an abelian group. \\

In fact, for very large classes of countable groups, the only arsenal of unitary representations (irreducible or not) which we have at our disposal are
the ones which are \emph{induced} from various subgroups, whose unitary 
representation theory we know better. \\

We recall that
if $G$ is a countable group and $L$ is a proper subgroup of $G$, then 
a unitary representation $(\cH,\pi)$ of $G$ is \emph{induced} from $L$,
if there exists a unitary representation $(\cH,\pi_o)$ of $L$ such that 
$(\cH,\pi)$ is isomorphic to the right regular representation on the 
(pre-)Hilbert space 
\[
\Ind_L^G \pi_o = 
\Big\{ 
f : G \ra  \cH_o \, : \, \|f\| < \infty \qand f(lg) = \pi_o(l)^{-1}f(g), 
\quad 
\textrm{for all $l \in L$ and $g \in G$}
\Big\}.
\]
Here, $\|\cdot\|$ denotes the norm induced from the following 
inner product 
\[
\langle f,f' \rangle = \sum_{x \in L \backslash G} \langle f(x), f'(x) \rangle_o,
\]
where $\langle \cdot, \cdot \rangle_o$ is the inner product on $\cH_o$. 
In particular, we note that $\Ind_L^G \textrm{id}_L$ is isomorphic to the left regular
representation of $G$ on $\ell^2(G/L)$. \\

A classical and very useful result of G. Mackey (see e.g. Theorem 6 in \cite{Mackey}) asserts that if $L$ is a subgroup of a countable group $G$ such that 
every non-identity coset in $G/L$ has an infinite $L$-orbit, then for 
\emph{any} \emph{finite-dimensional} irreducible unitary representation 
$(\cH_o,\pi_o)$ of $L$, the induced representation $\Ind_L^G \pi_o$ is 
irreducible. In particular, if $L$ is such a subgroup, then the left regular representation of $G$ on 
$\ell^2(G/L)$ is irreducible. \\

Our main result in this paper pertaining to induced representations,
shows in particular 
that if $G$ is a countable amenable group and $L$ is a "sufficiently thin" subgroup
in $G$, then induced representations from $L$ can never constitute  
counterexamples to the Left Strong Ergodic Theorem for the group $G$.

\begin{proposition}[Ergodic theorem for induced representations]
\label{induced}
Let $G$ be a countable group and let $(F_n)$ be a sequence of finite subsets of $G$. Suppose that $L$ is a subgroup of $G$ which satisfies 
\begin{equation}
\label{thin subgroup}
\lim_n \sup_{x,y \in G/L} \frac{|F_n \cap y Lx^{-1}|}{|F_n|} = 0.
\end{equation}
If $(\cH_o,\pi_o)$ is a unitary representation of $L$, then $(F_n)$ is 
a firm strong sequence with respect to the unitary induction
$\Ind_{L}^G \pi_o$.
\end{proposition}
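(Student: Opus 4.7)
The plan is to show directly that
\[
u_n := \frac{1}{|F_n|}\sum_{t \in F_n} \pi(tt_n)v \longrightarrow 0
\]
in norm for every $v \in \Ind_L^G \pi_o$ and every sequence $(t_n) \subset G$, and to note that $P_G v = 0$ in this setting. First, \eqref{thin subgroup} forces $[G:L] = \infty$: if $L$ had finite index $k$, then the finite partition $G = \bigsqcup_{i=1}^k y_i L$ would, upon taking $x = e$, yield $\sup_y |F_n \cap yL|/|F_n| \geq 1/k$ for every $n$ by pigeonhole, contradicting the hypothesis. Any $\pi(G)$-invariant vector in $\Ind_L^G \pi_o$ is constant on $G$; such a constant function has infinite $\ell^2$-norm when $[G:L] = \infty$, hence vanishes, so $P_G = 0$.

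Next, by density of vectors supported on finitely many right $L$-cosets together with the triangle inequality, it suffices to treat a single-coset section: $v(lx_0) = \pi_o(l)^{-1} w_0$ and $v \equiv 0$ off $Lx_0$. For such $v$,
\[
\langle v, \pi(g) v \rangle = \sum_{x \in L\backslash G} \langle v(x), v(xg)\rangle_o
\]
has at most one nonzero contribution, occurring only when $x_0 g \in Lx_0$, i.e.\ $g \in x_0^{-1} L x_0$; in that case the term is bounded by $\|w_0\|_o^2 = \|v\|^2$. Hence
\[
|\langle v, \pi(g) v\rangle| \leq \|v\|^2\,\mathbf{1}_{x_0^{-1} L x_0}(g).
\]

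Finally, expanding the squared norm and invoking this bound,
\[
\|u_n\|^2 = \frac{1}{|F_n|^2} \sum_{s,t \in F_n} \langle v, \pi(t_n^{-1} s^{-1} t t_n) v \rangle \leq \frac{\|v\|^2}{|F_n|^2}\, \#\bigl\{(s, t) \in F_n^2 \,:\, t_n^{-1} s^{-1} t t_n \in x_0^{-1} L x_0\bigr\}.
\]
The condition on $(s,t)$ rearranges to $t \in (st_n x_0^{-1}) L (t_n x_0^{-1})^{-1}$; setting $x := t_n x_0^{-1}$ (independent of $s$ and $t$) and $y := sx$, this is simply $t \in yLx^{-1}$. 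Summing first over $t$ produces $|F_n \cap yLx^{-1}|$, and summing over the $|F_n|$ choices of $s$ yields
\[
\|u_n\|^2 \leq \|v\|^2 \sup_{x, y \in G/L} \frac{|F_n \cap yLx^{-1}|}{|F_n|} \longrightarrow 0
\]
by \eqref{thin subgroup}. The only genuine obstacle is notational bookkeeping: one must arrange the coset arithmetic so that the conjugation $g \mapsto t_n^{-1} s^{-1} t t_n$ translates exactly into the double-coset-like sets $yLx^{-1}$ appearing in the hypothesis. Conceptually the content is minimal: a single-coset section of $\Ind_L^G \pi_o$ has its diagonal matrix coefficient supported on a conjugate of $L$, and the thinness hypothesis is built precisely to absorb such terms uniformly.
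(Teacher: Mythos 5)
Your argument is correct and is essentially the paper's proof: both expand $\|u_n\|^2$, observe that the diagonal matrix coefficient of a section supported on a single left coset of $L$ vanishes off a conjugate $x_0^{-1}Lx_0$, and let the thinness hypothesis \eqref{thin subgroup} absorb the resulting pair count (note the sets $yLx^{-1}$ depend only on the cosets of $x$ and $y$, so your suprema do match the hypothesis). The only difference is the reduction step: the paper decomposes $\cH_o$ into cyclic pieces (Lemma \ref{decomp}) and works with the sections $f_k$ supported on the identity coset, pushing the translations into a supremum over conjugating elements, whereas you reduce by density to sections supported on an arbitrary single coset and use Cauchy--Schwarz instead of the exact formula for the matrix coefficient --- a harmless, slightly more direct variant of the same estimate.
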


\begin{remark}
We stress that we do \emph{not} assume that the Left Strong Ergodic Theorem holds for the unitary representation $(\cH_o,\pi_o)$ of the 
subgroup $L$.
\end{remark}

In Section \ref{sec:induced} we shall prove this result and discuss the thinness criterion \eqref{thin subgroup} for subgroups of semidirect products. \\

We stress that Proposition \ref{induced} does \emph{not} imply that induced representations are never counterexamples to the Left Strong Ergodic Theorem. In fact, we shall in connection with Theorem \ref{thmfail} below introduce the notion of a conjugation-thick subgroup  whose
very definition is a strong violation of condition \eqref{thin subgroup}. 
We shall provide an example of a countable (amenable) group $G$ with 
a proper conjugation-thick subgroup $L$ and show that the Left Strong 
Ergodic Theorem fails in a very dramatic way for the left regular 
representation of $G$ on $\ell^2(G/L)$, which is the induction of the 
identity representation on $L$ to $G$.

\subsection{Non-property (T) groups and flabby pairs}

We shall now begin to discuss various failures of the Left Strong 
Ergodic Theorem in a more systematic fashion. For this, we 
need the notions of \emph{flabby pairs} and \emph{flabby groups},
where the latter can be seen as a very strong obstruction to the 
existence of a firm strong sequence in the group.

\begin{definition}[Flabby pair]
Let $G$ be a countable (not necessarily amenable) group and let $L$ be a subgroup of $G$. We 
say that the pair $(G,L)$ is \emph{flabby} if there exists a unitary representation
$(\cH,\pi)$ of $G$ with no non-zero $\pi(L)$-invariant vectors with the
property that for any sequence $(F_n)$ of finite subsets of $L$, whose 
sizes tend to infinity, there 
exists a sequence $(s_n)$ in $G$ such that
\[
\lim_n \Big \|\frac{1}{|F_n|} \sum_{s \in F_n} \pi(ss_n)v \, \Big\| = 1,
\]
for some unit vector $v$ in $\cH$, and we say that $G$ is 
\emph{flabby} if the pair $(G,G)$ is flabby. 
\end{definition}

We shall define and discuss Kazhdan's property (T) in Subsection 
\ref{sec:embed}, where the following embedding theorem is proved. 

\begin{theorem}[Embedding into flabby pairs]
\label{embed flabby}
Every countable group $L$ without Kazhdan's property (T), for instance, 
every countable \emph{infinite} amenable group, embeds into a countable group $G$ such that $(G,L)$ is a flabby pair.
\end{theorem}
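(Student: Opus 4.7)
The plan is to exploit the failure of Kazhdan's property (T) for $L$ to supply a representation with a rich family of almost-invariant vectors, and then to freely adjoin enough auxiliary unitaries to transport a single reference vector onto any one of them. Since the transporting unitaries will be attached via a free product, the $L$-part of the representation is not modified, and no new $L$-invariant vectors can appear.

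Because $L$ does not have property (T), I would start with a unitary representation $(\cH_o,\pi_o)$ of $L$ with $\cH_o^{L}=\{0\}$ which nevertheless admits almost-invariant vectors. Enumerate all pairs $\{(E_k,n_k)\}_{k\geq 1}$ with $E_k\subset L$ finite and $n_k\in\bN$, so that every such pair occurs; for each $k$ pick a unit vector $w_k\in\cH_o$ with $\|\pi_o(s)w_k-w_k\|<1/n_k$ for all $s\in E_k$. Fix a base unit vector $e_o\in\cH_o$ and, for each $k$, a unitary $U_k$ on $\cH_o$ with $U_k e_o=w_k$; such a unitary exists because any two unit vectors lie in a common orbit of $U(\cH_o)$.

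Let $K$ be the free group on generators $\{t_k\}_{k\geq 1}$ and set $G := L * K$, a countable group containing $L$. Since $G$ is a free product, any pair of unitary representations of $L$ and of $K$ on a common Hilbert space assembles into a unitary representation of $G$; so I declare $L$ to act on $\cH_o$ by $\pi_o$ and each generator $t_k$ to act by $U_k$, producing a unitary representation $(\cH_o,\pi)$ of $G$. The restriction $\pi|_L$ equals $\pi_o$, so $\cH_o^{\pi(L)}=\{0\}$, as required. To verify the remaining condition in the definition of flabbiness, let $(F_n)$ be any sequence of finite subsets of $L$ with $|F_n|\to\infty$; for each $n$ pick $k(n)$ with $E_{k(n)}\supseteq F_n$ and $n_{k(n)}\geq n$, and set $s_n:=t_{k(n)}$. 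Then $\pi(s_n)e_o=w_{k(n)}$, and the reverse triangle inequality yields
\[
\Big\|\frac{1}{|F_n|}\sum_{s\in F_n}\pi(ss_n)e_o\Big\|
=\Big\|\frac{1}{|F_n|}\sum_{s\in F_n}\pi_o(s)w_{k(n)}\Big\|
\geq 1-\max_{s\in F_n}\|\pi_o(s)w_{k(n)}-w_{k(n)}\|
\geq 1-\tfrac{1}{n},
\]
which tends to $1$.

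The main conceptual point is the twin requirement that $G$ be large enough to carry the transporting unitaries $U_k$ while keeping $\cH_o^{\pi(L)}=\{0\}$. The free product absorbs these auxiliary unitaries without imposing any algebraic relation with $L$, so $\pi|_L$ remains literally equal to $\pi_o$ and the invariance hypothesis transfers at once; this is why I would use a free product rather than, say, a direct or semidirect product, in which the $t_k$ could inadvertently commute with elements of $L$ and thereby force unwanted $L$-invariant vectors. The only combinatorial care needed is that the enumeration is arranged so that, for each finite $E\subset L$, pairs $(E,n)$ with arbitrarily large $n$ actually appear, which is what allows one to drive $1/n_{k(n)}\to 0$ along the chosen indices.
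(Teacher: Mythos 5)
Your proof is correct, and its overall architecture coincides with the paper's: both exploit the failure of property (T) to produce a representation of $L$ with no nonzero invariant vectors but with approximate invariance, both adjoin transporting unitaries as honest group elements via a free product (so that the restriction to $L$ is untouched and no $L$-invariant vectors can appear), and both then drive the averaged norm to $1$ along translated finite sets. The genuine difference is in how the transporting unitaries and the near-invariant matrix coefficients are produced. The paper invokes Proposition \ref{help flabby}, which rests on Proposition H.2 in the appendix of \cite{Kech}: one passes to the infinite multiple $\rho=\rho_o^{\oplus\bN}$ and obtains a sequence of unitaries $\sigma_n$ with $\langle v,\sigma_n^{-1}\rho(s)\sigma_n v\rangle\to 1$ for every $s\in L$, and then takes $G=L*S$ with $S$ the group generated by the $\sigma_n$. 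You instead select, for an enumeration of pairs (finite set, tolerance), unit vectors $w_k$ that are almost invariant in norm, and use transitivity of the unitary group on the unit sphere to pick arbitrary unitaries $U_k$ with $U_k e_o=w_k$, letting a free group act through them; the estimate is then a one-line triangle inequality rather than an expansion of the squared norm over $F_n^{-1}F_n$. What your route buys is self-containedness: it avoids the appeal to the unitary-orbit/weak-containment result of \cite{Kech} and stays on the original Hilbert space. What it quietly uses is the standard equivalence between the paper's definition of weak containment of the identity and the existence of almost-invariant unit vectors (in general one may have to replace $\pi_o$ by a multiple of itself, which still has no invariant vectors, to extract genuine almost-invariant vectors from approximating finite sums of matrix coefficients); this is a standard fact from \cite{BV}, so it is a presentational point rather than a gap, but it deserves a sentence. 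With that remark added, your argument is a complete and somewhat more elementary proof of the theorem.
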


In the case when $L$ is a countable (infinite) amenable group, then 
$(G,\{e\},L)$ is always a left amenable triple and $(F_n s_n)$ is a
left F\o lner sequence for this triple whenever $(F_n)$ is a left 
F\o lner sequence in $L$ and $(s_n)$ is any sequence in $G$. 

In particular, by the Left Weak Ergodic Theorem, $(F_n s_n)$ is a 
\emph{weak} sequence for $(G,\{e\},L)$. However, 
as Theorem \ref{embed flabby} demonstrates, it is always 
possible to choose $(s_n)$ so that $(F_n s_n)$ is \emph{not} a 
strong sequence for $(G,\{e\},L)$, so the Left Strong Ergodic 
Theorem fails for this triple.

\subsection{Explicit examples where the Left Strong Ergodic Theorem fails}

Unfortunately, the construction of the embedding in Theorem \ref{embed flabby} is far from explicit, and thus does not yield any insight to the question 
whether the Left Strong Ergodic Theorem can fail for a countable amenable group
(and not just for a left amenable triple). As it turns out, the following 
notion of a \emph{contracting triple} will provide a very convenient and concrete framework for discussing such failures.

\begin{definition}[Contracting triples]
Let $G$ be a countable group and let $H$ and $L$ be subgroups
of $G$. The triple $(G,H,L)$ is called \emph{contracting} if $H < L$ and 
for every finite subset $F \subset L$, there exists $t \in G$ such 
that $tFt^{-1} \subset H$. 

In the case when $G = L$, we say that $H$ is a \emph{conjugation-thick subgroup}.
\end{definition}

We shall discuss various examples of contracting triples and conjugation-thick subgroups in Section \ref{examplesconj}. At this point of the 
exposition we shall simply state and prove the following key result about these triples.

\begin{theorem}[Failure for contracting triples]
\label{thmfail}
If $(G,H,L)$ is a contracting triple such that $H$ has infinite index in $L$, 
then the pair $(G,L)$ is flabby. In particular, the Left Strong Ergodic Theorem
fails for the triple $(G,\{e\},L)$.
\end{theorem}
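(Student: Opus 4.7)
The plan is to take $\pi$ to be the quasi-regular representation of $G$ on $\cH := \ell^2(G/H)$ with the $H$-invariant unit vector $v_0 := \delta_{eH}$, and to exploit the contracting hypothesis to collapse the right-translated averages onto a single coset. Given any sequence $(F_n)$ of finite subsets of $L$ with $|F_n|\to\infty$, the contracting property supplies $t_n\in G$ with $t_n F_n t_n^{-1}\subset H$; set $s_n := t_n^{-1}$ and write $h_s := t_n s t_n^{-1}\in H$ for each $s\in F_n$. The $\pi(H)$-invariance of $\delta_{eH}$ then gives
\[
\pi(ss_n)\delta_{eH} \;=\; \pi(t_n^{-1})\pi(h_s)\delta_{eH} \;=\; \delta_{t_n^{-1}H}
\]
independently of $s$, so that $\tfrac{1}{|F_n|}\sum_{s\in F_n}\pi(ss_n)\delta_{eH} = \delta_{t_n^{-1}H}$, a unit vector in $\cH$.

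To turn this into a witness of flabbiness of $(G,L)$ I must ensure the representation has no nonzero $L$-invariant vectors. Let $\cM$ denote the smallest closed $G$-invariant subspace of $\cH$ containing $\cH^L$, and set $\cH' := \cM^\perp$; any $L$-invariant vector in $\cH'$ lies in $\cH^L \cap \cM^\perp \subset \cM \cap \cM^\perp = \{0\}$. Since the orthogonal projection $P$ onto $\cH'$ commutes with $\pi$, the average of $\pi(ss_n)(Pv_0)$ equals $\pi(t_n^{-1})Pv_0$, of norm $\|Pv_0\|$; normalizing produces the desired unit vector, provided $Pv_0\neq 0$. The main obstacle is the degenerate case $Pv_0 = 0$ (when the $G$-orbit of $\cH^L$ is dense in $\cH$, which occurs for instance if $L$ lies in some $G$-conjugate of $H$). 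To handle it one exploits the freedom in the choice of $t_n$: by sending the conjugates $t_n F_n t_n^{-1}$ into pairwise disjoint regions of $H$, the subgroup $H_0 := \langle \bigcup_n t_n F_n t_n^{-1}\rangle \subset H$ becomes too thin for any $G$-conjugate to contain $L$, so $\ell^2(G/H_0)$ has no nonzero $L$-invariants while $(G,H_0,L)$ remains contracting by construction. The averaging identity above then goes through verbatim for $\pi := \ell^2(G/H_0)$ and $v := \delta_{eH_0}$, and a single representation witnessing flabbiness can be assembled as a direct sum of such $\ell^2(G/H_0)$'s over sufficiently many sequences $(F_n)$.

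The ``in particular'' assertion follows immediately. Any left F\o lner sequence $(F_n)$ in $L$ is also a left F\o lner sequence for the triple $(G,\{e\},L)$, and the right translates $(F_n s_n)$ remain left F\o lner for the same triple since $|s(F_n s_n) \triangle (F_n s_n)| = |sF_n \triangle F_n|$ for every $s\in L$. By Theorem~\ref{lwet} the averages $\tfrac{1}{|F_n|}\sum_{s\in F_n}\pi(ss_n)v$ converge weakly to the projection onto the $\pi(L)$-invariants, which is $\{0\}$ by flabbiness; yet their norms tend to $1$ by the construction above, so the strong (norm) limit cannot exist, and the Left Strong Ergodic Theorem fails for $(G,\{e\},L)$.
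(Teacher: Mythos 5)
Your core mechanism is exactly the paper's: take $\cH=\ell^2(G/H)$ with $v_o=\delta_{eH}$, use the contracting property to produce $s_n$ with $s_n^{-1}F_ns_n\subset H$, and observe that $\frac{1}{|F_n|}\sum_{s\in F_n}\pi(ss_n)v_o$ collapses to the single unit vector $\pi(s_n)v_o$. Where you diverge is in worrying whether $\ell^2(G/H)$ contains nonzero $\pi(L)$-invariant vectors. The paper disposes of this in one line ("since $H$ has infinite index in $L$ there are no non-zero $\pi(L)$-invariant vectors"); that assertion is immediate only for the orbit of the identity coset --- an $L$-invariant $\ell^2$-function is constant on the infinite orbit $L/H$ and hence vanishes there --- which already gives $P_Lv_o=0$ and therefore the "in particular" conclusion, since the averages have norm one. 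Your instinct that \emph{other} $L$-orbits on $G/H$ could a priori be finite is a legitimate point the paper's one-liner does not address; indeed, if $L$ is finitely generated the contracting property forces $L\subset t^{-1}Ht$ for some $t$, so $\delta_{t^{-1}H}$ is then a nonzero $L$-invariant vector. (In the paper's actual examples $L$ is not finitely generated and all $L$-orbits on $G/H$ are infinite, so the issue does not arise there.)

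However, your repair of this point does not hold together, so as a self-contained argument the proposal has a gap precisely where it tries to go beyond the paper. In the branch $Pv_0\neq 0$ the computation is fine, but you never establish $Pv_0\neq 0$. In the fallback branch, the contracting hypothesis hands you, for each finite $F\subset L$, \emph{some} $t$ with $tFt^{-1}\subset H$, with no control whatsoever over where inside $H$ the image lands; so "sending the conjugates into pairwise disjoint regions of $H$" and the claim that $H_0$ is "too thin for any $G$-conjugate to contain $L$" are assertions, not arguments. Moreover, even if no $G$-conjugate of $H_0$ contained $L$, that only rules out $L$-\emph{fixed} cosets in $G/H_0$; a nonzero $L$-invariant vector exists as soon as some $L$-orbit is finite, i.e.\ as soon as $L\cap xH_0x^{-1}$ has finite index in $L$ for some $x$, which your construction does not exclude. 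Restricted to the situation the paper actually treats (all $L$-orbits on $G/H$ infinite, as in the HNN and $\Sym_o(\bN)$ examples), your argument reduces to the paper's and is correct, and your derivation of the "in particular" clause via Theorem \ref{lwet} matches the intended one.
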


Indeed, suppose that $(G,H,L)$ is a contracting triple and let $(F_n)$ be any
sequence of finite subsets of $L$. Let $(\cH,\pi)$ denote the left regular 
representation of $G$ on $\ell^2(G/H)$ and let $v_o$ denote the indicator 
function of the identity coset in $G/H$. We note that $v_o$ is a $\pi(H)$-invariant unit vector in $\cH$ and since $H$ has infinite index in $L$ there
are no non-zero $\pi(L)$-invariant vectors in the Hilbert space $\ell^2(G/H)$. By the contraction property of the triple $(G,H,L)$, we can find a sequence $(s_n)$ in $G$ such that the inclusions $s_n^{-1} F_n s_n \subset H$ hold for all $n$ and thus 
\[
\frac{1}{|F_n|} \sum_{s \in F_n} \pi(ss_n)v_o  =\pi(s_n)v_o
\quad 
\textrm{for all $n$},
\]
since $v_o$ is $\pi(H)$-invariant, so in particular the Hilbert norm of this expression equals one for every $n$,
which shows that the pair $(G,L)$ flabby. \\

The following corollary now follows from Theorem \ref{thmfail} and 
the discussions in Section \ref{examplesconj}.

\begin{corollary}
\label{cor:finsym}
The (locally finite) group $\Sym_o(\bN)$ of all permutations on $\bN$ with finite supports is flabby, and thus the Strong Left Ergodic Theorem fails for
this group.
\end{corollary}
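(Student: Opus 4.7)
The plan is to invoke Theorem \ref{thmfail} with $G = L = \Sym_o(\bN)$, for which it suffices to exhibit a conjugation-thick subgroup $H < G$ of infinite index. I would take $H$ to be the point stabilizer
\[
H \;=\; \Stab(1) \;=\; \{ \sigma \in \Sym_o(\bN) \, : \, \sigma(1) = 1 \},
\]
which is naturally identified with $\Sym_o(\bN \setminus \{1\})$. Infinite index is immediate from the fact that $\Sym_o(\bN)$ acts transitively on $\bN$ (by $n \mapsto \sigma(n)$), so the orbit–stabilizer correspondence gives $\Sym_o(\bN)/H \cong \bN$.

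To check conjugation-thickness, let $F \subset \Sym_o(\bN)$ be any finite subset and put $S = \bigcup_{\sigma \in F} \supp(\sigma) \subset \bN$; this is a \emph{finite} set since every $\sigma \in F$ has finite support by definition of $\Sym_o(\bN)$. Because $\bN \setminus S$ is infinite, I can choose some $m \in \bN \setminus S$ and let $t \in \Sym_o(\bN)$ be the transposition $(1,m)$ (or the identity if $1 \notin S$); in either case $t^{-1}(1) \notin S$. Using the standard identity $\supp(t\sigma t^{-1}) = t(\supp(\sigma))$, the condition $t^{-1}(1) \notin \supp(\sigma)$ translates to $1 \notin \supp(t\sigma t^{-1})$, meaning $t\sigma t^{-1}$ fixes $1$ and hence lies in $H$. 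Since this holds for every $\sigma \in F$, we conclude that $tFt^{-1} \subset H$, establishing conjugation-thickness.

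Consequently $(G, H, G)$ is a contracting triple with $H$ of infinite index in $L = G$, so Theorem \ref{thmfail} applies directly: the pair $(G,G) = (\Sym_o(\bN),\Sym_o(\bN))$ is flabby, which by definition is exactly the statement that the group $\Sym_o(\bN)$ is flabby, and simultaneously the Left Strong Ergodic Theorem fails for the triple $(G,\{e\},G)$, i.e., for the group $\Sym_o(\bN)$ itself. There is no real obstacle here; the only point demanding a bit of care is the conjugation convention used to reduce the subgroup inclusion $tFt^{-1}\subset H$ to a concrete statement about the action of $t$ on supports, which the identity $\supp(t\sigma t^{-1}) = t(\supp(\sigma))$ handles cleanly.
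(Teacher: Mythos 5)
Your proof is correct and follows essentially the same route as the paper: the paper exhibits the pointwise stabilizer $L_K$ of an arbitrary non-empty finite set $K\subset\bN$ as a conjugation-thick subgroup by conjugating the supports of a finite set $F$ away from $K$ with an involution, and then invokes Theorem \ref{thmfail}; your argument is the special case $K=\{1\}$ with a single transposition playing the role of that involution. Your explicit verification that the stabilizer has infinite index (via transitivity of the action on $\bN$) is a welcome touch, since Theorem \ref{thmfail} requires infinite index and the paper only records that $L_K$ is proper.
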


\subsection{Automatic pointwise ergodic theorems}

The final section of this paper has as its goal to show that mean and 
pointwise ergodic theory of triples can exhibit dramatically different 
phenomena than in the classically studied "group case". This will be 
done by providing examples of triples for which the mean and 
pointwise ergodic theorem hold along \emph{any} sequence of 
subsets. \\

The key concept here is that of a \emph{rigid pair}, which we define 
as follows.

\begin{definition}[Rigid pairs]
Let $G$ be a countable group and let $H$ be a subgroup of $G$. We say
that the pair $(G,H)$ is \emph{rigid} if whenever $(\cH,\pi)$ is a unitary
representation of $G$ with a no $\pi(G)$-invariant vectors, but equipped
with a $\pi(H)$-invariant unit vector $v_o$ in $\cH$, then 
\[
\langle v_o, \pi(s)v_o \rangle = 
\left\{
\begin{array}{cc}
1 & \textrm{if $s \in H$} \\
0 & \textrm{otherwise}
\end{array}
\right.;
\]
in other words, the cyclic sub-representation spanned by $v_o$ is isomorphic
to the left regular representation of $G$ on $\ell^2(G/H)$. 
\end{definition}

In Section \ref{sec:automatic} we shall show that the pairs
\[
G = \bQ \rtimes \bQ^{*} \qand H = \{0\} \rtimes \bQ^{*}
\]
and
\[
G = \SL_n(\bQ) \times \SL_n(\bQ) \qand H = \Delta_2(\SL_n(\bQ)),
\] 
for $n \geq 2$, where $\Delta_2(\SL_n(\bQ))$ denotes the 
diagonal subgroup in $G$, are rigid. \\

Our main observation concerning rigid pairs, which will be proved in Section \ref{sec:automatic}, can be stated as follows.

\begin{theorem}[Automatic Pointwise Ergodic Theorem]
\label{automatic}
Let $(G,H)$ be a rigid pair and suppose that $(X,\nu)$ is an 
ergodic probability measure-preserving $G$-space and 
$\phi$ is a square-integrable measurable function on $X$ 
which is essentially $H$-invariant. Then, for \emph{every} 
strictly increasing nested sequence $(F_n)$ of finite subsets of 
$G/H$, we have
\[
\lim_n \frac{1}{|F_n|} \sum_{s \in F_n} \varphi(s^{-1}x) = \int_X \varphi \, d\nu
\]
for almost every $x \in X$ with respect to $\nu$.
\end{theorem}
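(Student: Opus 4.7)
The plan is to convert the averaging problem, via the rigidity hypothesis, into an almost-sure Ces\`aro convergence problem for an \emph{orthogonal} series in $L^2(X,\nu)$, after which the classical Rademacher--Menshov convergence theorem and Kronecker's lemma take over; no F{\o}lner property or $G$-specific maximal inequality is required.

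First I would normalize the integrand. Set $c = \int_X \varphi \, d\nu$ and $\psi = \varphi - c$; then $\psi$ is still essentially $H$-invariant with zero mean, and it suffices to show that
\[
\lim_n \frac{1}{|F_n|} \sum_{sH \in F_n} \psi(s^{-1}x) = 0 \quad \textrm{for $\nu$-a.e. $x$.}
\]
Writing $\pi$ for the Koopman representation of $G$ on $L^2(X,\nu)$ given by $(\pi(s)f)(x) = f(s^{-1}x)$, the vector $\pi(s)\psi$ depends only on the coset $sH$ because $\psi$ is $\pi(H)$-invariant, so the sum above is well defined. Let $\cK \subset L^2(X,\nu)$ be the cyclic subrepresentation generated by $\psi$. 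Since $\nu$ is $G$-ergodic, the only $\pi(G)$-invariants in $L^2(X,\nu)$ are the constants, and these are orthogonal to $\psi$; hence $\cK$ carries no non-zero $\pi(G)$-invariant vector, while $v_o := \psi/\|\psi\|_2$ is a $\pi(H)$-invariant unit vector in $\cK$.

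Applying the rigidity of $(G,H)$ to the sub-representation $(\cK, \pi|_\cK)$ then gives
\[
\langle \psi, \pi(s)\psi \rangle = \|\psi\|_2^2 \cdot \mathbf{1}_H(s), \quad s \in G,
\]
so the family $\{\pi(s)\psi : sH \in G/H\}$ is orthogonal in $L^2(X,\nu)$ with constant norm $\|\psi\|_2$. Because $(F_n)$ is strictly increasing and nested, the union $\bigcup_n F_n$ admits an enumeration $\{a_1 H, a_2 H, \ldots\}$ with $F_n = \{a_1 H, \ldots, a_{m_n} H\}$ for $m_n := |F_n|$, and the sequence $f_k := \pi(a_k)\psi$ is orthogonal in $L^2(X,\nu)$ with $\|f_k\|_2 = \|\psi\|_2$ for every $k$.

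At this point the conclusion reduces to the classical Ces\`aro almost-sure convergence theory for orthogonal series. The Rademacher--Menshov theorem asserts that whenever $(e_k)$ is an orthonormal system in $L^2$ and $\sum_k c_k^2 \log^2(k+1) < \infty$, the series $\sum_k c_k e_k$ converges almost everywhere. Applied with $e_k = f_k/\|\psi\|_2$ and $c_k = \|\psi\|_2/k$, this yields almost-sure convergence of the series $\sum_k f_k(x)/k$, and Kronecker's lemma, applied pointwise on the conull set of convergence, then gives $\frac{1}{N}\sum_{k=1}^N f_k(x) \to 0$ for $\nu$-a.e. $x$; restricting to the subsequence $N = m_n$ proves the theorem. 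The only genuine hurdle is the initial step: recognizing that the rigidity hypothesis delivers \emph{exact} orthogonality of the translates $\{\pi(s)\psi : sH \in G/H\}$ from which the behavior along \emph{any} exhaustion of $G/H$ can be controlled; once this is in hand, no further input from the $G$-action is needed.
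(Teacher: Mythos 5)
Your proof is correct and follows essentially the same route as the paper: rigidity applied to the Koopman representation on the mean-zero functions gives exact orthogonality of the coset translates of $\varphi - \int_X \varphi\, d\nu$, and the nested exhaustion reduces everything to a.e.\ Ces\`aro convergence for a bounded orthogonal sequence. The only difference is cosmetic: the paper proves that last step directly by a Rajchman-type Borel--Cantelli argument along squares, whereas you invoke Rademacher--Menshov plus Kronecker's lemma, which yields the same conclusion.
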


\subsection{Organization of the paper}
The paper is divided into $10$ sections as follows. \\

In Section \ref{examples} we recall the GNS-construction associated to a positive definite function on a countable group and list a few examples 
of triples (mostly of the form $(G,H,G)$ for various countable groups $G$
and subgroups $H$ thereof) which have been studied extensively in the literature and to which our results apply. \\

In Section \ref{sec:lwet} we give a short proof of the Left Weak Ergodic 
Theorem, and in Section \ref{sec:rset} we adapt F. Riesz' classical, and
very elegant, argument to prove the Right Strong Ergodic Theorem to 
our setting. \\

In Section \ref{sec:transfer} we prove that the validity of the Left Strong
Ergodic Theorem for a countable amenable group is equivalent to the
existence of a firm strong sequence. We then apply this result in Section 
\ref{sec:lseticc} to show that the Left Strong Ergodic Theorem holds for 
all countable groups with no ICC quotients. \\

In Section \ref{sec:induced} we prove that counterexamples to the Left 
Strong Ergodic Theorem cannot stem from induced representations from
"thin" subgroups, and we discuss the situation for semidirect products of
groups in detail. \\

In Section \ref{sec:embed} we prove that every countable group $L$
without property (T), e.g. every amenable group, embeds into a 
countable group $G$ so that the Left Strong Ergodic Theorem fails 
for the triple $(G,\{e\},L)$. \\

In Section \ref{examplesconj} we give explicit examples of contracting 
triples and conjugation-thick subgroups. In particular, we show that 
the locally finite group $\Sym_o(\bN)$ contains proper conjugation-thick subgroups, thereby establishing Corollary \ref{cor:finsym}.\\

In Section \ref{sec:automatic} we provide examples of so called rigid 
pairs $(G,H)$ of groups for which pointwise ergodic theorem is valid 
along \emph{any} strictly increasing nested sequence of subsets of the 
coset space $G/H$, and we prove Theorem \ref{automatic}.

\subsection{Acknowledgments}
The authors wish to express their thanks to Michael Cantrell, Alex Furman, Anders Karlsson, Amos Nevo, Felix Pogorzelski and Andreas Thom for interesting and encouraging discussions. The authors would also like to thank Max Planck Institute for Mathematics, Bonn, for their hospitality.

\section{The GNS-construction and unitary representations of coset spaces}
\label{examples}

We shall now give a short list of examples of pairs $(G,H)$ of countable groups,
where $H$ is a subgroup of $G$, whose unitary representations with 
non-zero $\pi(H)$-invariant vectors have attracted some attention in 
the literature. 

\subsection{The GNS-construction}
Let $G$ be a countable group. A complex-valued function $\phi : G \ra \bC$
is called \emph{positive definite} if for every finite set 
$c_1,\ldots,c_N$ of complex numbers and for every finite set 
$x_1,\ldots,x_N$ in $G$, we have
\[
\sum_{i,j = 1}^N c_i \overline{c}_j \, \phi(x_i^{-1}x_j) \geq 0.
\]
We note that if $(\cH,\pi)$ is a unitary representation of $G$ and $v$ is any 
vector in $\cH$, then the function 
\[
\phi_v(x) = \langle v, \pi(x)v \rangle \quad \textrm{for $x \in G$}
\]
is positive definite on $G$. Furthermore, if $v$ is $\pi(H)$-invariant for some subgroup 
$H < G$, then $\phi_v$ is bi-$H$-invariant. \\

Conversely, if $H$ is a subgroup of $G$ and $\phi$ is a positive definite
function on $G$ which is bi-$H$-invariant and normalized so that
$\phi(e) =1$, then this function must
be of the form $\phi_v$ for some unitary representation $(\cH_\phi,\pi_\phi)$ of 
$G$ with a $\pi(H)$-invariant (unit) vector $v$. To see this, we first note that we can
equip the linear space $C_o(G/H)$ of all complex-valued finitely  supported 
functions on the coset space $G/H$ with the 
positive definite inner product 
\[
\langle c, c' \rangle_\phi = \sum_{x,y \in G/H} c(x) \overline{c'(y)} \, \phi(x^{-1}y), \quad c, c' \in C_o(G/H).
\]
One readily checks that the left regular representation of $G$ on 
$C_o(G/H)$ is unitary with respect to the inner product $\langle \cdot, \cdot \rangle_\phi$.
If we quotient out the (invariant) linear subspace of all elements $c$ which
satisfy $\langle c,c \rangle_\phi = 0$, and complete, then we have constructed a Hilbert space $\cH_\phi$ and a unitary representation 
$\pi_\phi$ on $\cH_\phi$ with a $\pi_\phi(H)$-invariant 
(cyclic) unit vector $v$ (which correspond to the equivalence class of the indicator function of the identity coset in $G/H$) such that 
\[
\phi(HgH) 
= 
\langle v, \pi(g)v \rangle_\phi, 
\quad 
\textrm{for all $HgH \in H \backslash G / H$}. 
\] 
In the literature, the unitary representation $(\cH_\phi,\pi_\phi)$ is often referred to as the \emph{GNS-construction} of the positive definite 
function $\phi$. \\

We note that the GNS-construction associated to the 
positive definite function $\phi$ on $H \backslash G / H$ which satisfies $\phi(HeH) = 1$
and $\phi(HgH) = 0$ for all $g \neq HeH$ is isomorphic 
to the left regular representation of $G$ on $\ell^2(G/H)$,
and the GNS-construction associated to the constant function one
is simply the identity representation. These unitary representations
of course exist for \emph{any} countable group $G$ and subgroup
$H$ thereof. If we want to construct other unitary representations, we should
impose more conditions on the pair $(G,H)$.

\subsection{Higher order characters}
Let $G_o$ be a countable group and define for $k \geq 2$, the pair
\[
G = G_o^k \qand H = \Delta_k(G_o),
\]
where the latter group denotes the diagonal subgroup in $G_o^k$. We
note that the coset space $G/H$ can be identified with $G_o^{k-1}$, and 
if $k = 2$, then a bi-$H$-invariant positive definite function on $G$ is
nothing else than a conjugation-invariant positive definite function on 
$G_o$ (often called a character). 

The study of characters on countable (non-abelian) discrete groups is 
quite subtle in general and complete classifications are only known in a
hand-full of cases. However, this research area has recently 
experienced a rejuvenation with a series of breakthrough results and 
observations; see for instance the papers \cite{DM14}, \cite{PT13} and 
\cite{V11}. \\

The cases when $k \geq 3$ seem to have attracted much 
less attention. 

\subsection{Semidirect products}
Let us now consider the case when 
\[
G = N \rtimes \Lambda \qand H = \{o\} \rtimes \Lambda,
\]
where $N$ is a countable \emph{abelian} group and 
$\Lambda < \Aut(N)$. The multiplication on $G$ can be 
encoded on the direct product $N \times \Lambda$ by
\[
(n_1,\lambda_1)(n_2,\lambda_2) = (n_1 + \lambda_1(n_2), \lambda_1 \lambda_2)
\quad
\textrm{for all $(n_1,\lambda_1), (n_2,\lambda_2) \in N \times \Lambda$.}
\]
If $\phi$ is a bi-$H$-invariant positive definite function on $G$, then
\[
\phi(n,\lambda) = \phi_o(n) \quad \textrm{for all $(n,\lambda) \in G$}
\]
for some positive definite function $\phi_o$ on $N$ which is invariant 
under the action of $\Lambda$. Since $N$ is abelian, a classical theorem 
of S. Bochner (see e.g.  the book \cite{Ru}) now asserts that there exists a
\emph{unique} probability measure $\nu_o$ on the dual (compact) group $\hat{N}$ of $N$, which is invariant under the dual action of $\Lambda$ on $\hat{N}$, such
that
\[
\phi_o(n) = \int_{\hat{N}} \chi(n) \, d\nu_o(\chi), \quad \forall \, n \in N.
\] 
Conversely, if $\nu_o$ is a probability measure on $\hat{N}$ which is invariant under the dual action of $\Lambda$, then the function 
$\phi(n,\lambda) = \phi_o(n)$ is a bi-$H$-invariant positive definite on 
$G$ and thus corresponds via the GNS-construction to a unitary 
representation of $G$ with a (cyclic) $H$-invariant vector. \\

The study of $\Lambda$-invariant probability measures on $\bT^d$ 
for various (thin) subgroups $\Lambda$ of the automorphism group 
$\Aut(\bZ^d) = \GL_d(\bZ)$ has attracted a lot of interest in recent 
years and very little is known in general.

\subsection{Hecke pairs}

Let $G$ be a countable group. We say that a subgroup $H < G$ is 
\emph{almost normal} (or commensurated) if every $H$-orbit
on $G/H$ is finite. In particular, every normal subgroup of $G$
is almost normal. If a subgroup $H$ is almost normal, then one
often refers to $(G,H)$ as a \emph{Hecke pair}. \\

Given a Hecke pair $(G,H)$ one can always associate a pair 
$(\overline{G},\overline{H})$ (known as the \emph{Schlicting 
completion} of $(G,H)$), where $\overline{G}$ is a locally compact
group and $\overline{H}$ is a compact open subgroup thereof, in such
a way that $G$ embeds densely in $\overline{G}$ and there is a 
one-to-one correspondence between the bi-$H$-invariant positive definite functions on $G$ and bi-$\overline{H}$-invariant continuous positive 
definite functions on $\overline{G}$. For instance, if
\[
G = \PSL_2(Z[1/p])
\qand 
H = \PSL_2(\bZ),
\]
where $p$ is a prime number, then
\[
\overline{G} = \PSL_2(\bQ_p)
\qand 
\overline{H} = \PSL_2(\bZ_p).
\]
We refer the reader to the paper \cite{A14} for a concise treatment of Hecke
pairs and their Schlicting completions. In the example above, the unitary representations of $\PSL_2(\bQ_p)$ with a non-zero 
$\PSL_2(\bZ_p)$-invariant vector (also known as \emph{class one} representations) are well-known and admit explicit descriptions; see e.g. 
the book \cite{Lu}.

\section{Proof of the Left Weak Ergodic Theorem}
\label{sec:lwet}
The proof of Theorem \ref{lwet} is not hard and we include a proof only 
for completeness. Let $(G,H,L)$ be a left amenable triple and suppose 
that $(F_n)$ is a a left F\o lner sequence for this triple. We wish to show
that if $(\cH,\pi)$ is a (separable) unitary representation with a $\pi(H)$-invariant unit vector $v$, then 
\[
\lim_n \frac{1}{|F_n|} \sum_{s \in F_n} \pi(s)v = P_L v
\]
in the weak topology on $\cH$. Since the unit ball in $\cH$ is weakly \emph{sequentially} compact, it suffices to prove that whenever $(n_j)$ 
is a sequence such that the limit of the sequence
\[
v_j = \frac{1}{|F_{n_j}|} \sum_{s \in F_{n_j}} \pi(s)v
\]
exists in the weak topology, then this weak limit is $\pi(L)$-invariant, or 
equivalently, the difference $\pi(t)v_{j} - v_{j}$  converges to zero
in the weak topology for every $t \in L$. \\

We readily check that
\[
\big\| \pi(t)v_{j} - v_{j} \big\|
= 
\Big\| 
\frac{1}{|F_{n_j}|} 
\sum_{s \in F_{n_j} \triangle tF_{n_j}} \pi(s) v
\Big\|
\leq \frac{|F_{n_j} \triangle tF_{n_j}|}{|F_{n_j}|},
\]
and the right hand side tends to zero with $j$ by the left F\o lner property
for $(F_n)$, which finishes the proof. 

\section{Proof of the Right Strong Ergodic Theorem}
\label{sec:rset}

The aim of this section is to give a proof of Theorem \ref{rset} based on
an adaption of the elegant argument of F. Riesz outlined in \cite{R45}. \\

The main part of F. Riesz argument is an important observation about 
decompositions of unitary representations which in the case of triples 
we have adapted as follows.

\begin{proposition}[Orthogonal splitting]
\label{osplit}
Let $G$ be a countable group and suppose that $H$ and $L$ are 
subgroups of $G$ with $H < L < N_G(H)$. If $(\cH,\pi)$ is a 
unitary representation of $G$, then
\begin{equation}
\label{prop. orth. split}
\cH^H = \cH^L \oplus \cobnd_{(L,H)}(\cH,\pi)
\end{equation}
where
\[
\cobnd_{(L,H)}(\cH,\pi) = \overline{\linspan\big\{ v - \pi(s)v \, : \, s \in L, \: v \in \cH^H \big\}} \subset \cH^H.
\]
\end{proposition}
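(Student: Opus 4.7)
The plan is to establish the orthogonal direct sum decomposition by verifying three facts, all of which rely crucially on the normalization condition $L < N_G(H)$.

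First, I would check that $\cobnd_{(L,H)}(\cH,\pi)$ actually lies inside $\cH^H$, as is implicitly asserted in its definition. For any $s \in L$ and $h \in H$, the fact that $s \in N_G(H)$ means $s^{-1}hs \in H$, so for every $v \in \cH^H$,
$$\pi(h)\pi(s)v = \pi(s)\pi(s^{-1}hs)v = \pi(s)v.$$
Hence $\pi(s)v \in \cH^H$, and therefore $v - \pi(s)v \in \cH^H$; taking closures preserves this containment. Note also that $\cH^L \subset \cH^H$ since $H < L$, so both summands on the right-hand side of \eqref{prop. orth. split} sit inside $\cH^H$.

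Second, I would verify that $\cH^L$ and $\cobnd_{(L,H)}(\cH,\pi)$ are orthogonal. If $w \in \cH^L$, $v \in \cH^H$, and $s \in L$, then by unitarity of $\pi(s)$ and the $\pi(L)$-invariance of $w$,
$$\langle v - \pi(s)v,\, w \rangle = \langle v, w \rangle - \langle v,\, \pi(s^{-1})w \rangle = 0,$$
which extends to the closure.

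Third, and this is really the heart of F. Riesz' classical argument, I would show that any vector $w \in \cH^H$ which is orthogonal to $\cobnd_{(L,H)}(\cH,\pi)$ must lie in $\cH^L$. By the first step, $w \in \cH^H$ is a legitimate choice in the defining coboundaries, so plugging $v = w$ gives, for every $s \in L$,
$$0 = \langle w - \pi(s)w,\, w \rangle = \|w\|^2 - \langle \pi(s)w, w \rangle,$$
so $\langle \pi(s)w, w \rangle = \|w\|^2$. By Cauchy--Schwarz, $|\langle \pi(s)w, w \rangle| \leq \|\pi(s)w\|\,\|w\| = \|w\|^2$, and the equality case forces $\pi(s)w$ and $w$ to be proportional. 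Writing $\pi(s)w = cw$ for some $c \in \bC$ and substituting back yields $c = 1$, so $\pi(s)w = w$ for every $s \in L$, i.e., $w \in \cH^L$.

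The three facts together, combined with the fact that $\cH^H$ is closed, give the orthogonal splitting in \eqref{prop. orth. split}. There is no real obstacle: the only subtle point is the systematic use of $L < N_G(H)$, which is needed both to keep the coboundary space inside $\cH^H$ and to permit the substitution $v = w$ in the third step. Without this normalization assumption neither side of \eqref{prop. orth. split} would be well-defined as stated.
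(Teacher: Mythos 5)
Your proof is correct and follows essentially the same route as the paper: both reduce the splitting to showing that a vector of $\cH^H$ orthogonal to all coboundaries $w-\pi(s)w$ is $\pi(L)$-invariant, via a one-line inner-product identity (the containment and orthogonality checks you spell out are implicit in the paper). The only cosmetic difference is that the paper transfers the unitary to obtain $\langle v-\pi(s^{-1})v,\,w\rangle=0$ and then tests against $w=v-\pi(s^{-1})v$ (this is where it invokes $L<N_G(H)$), whereas you test the coboundary against the vector itself and finish with the equality case of Cauchy--Schwarz; both are valid.
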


Once this result has been established, the proof of Theorem \ref{rset} is 
immediate. We recall that we wish to prove that if $(F_n)$ is a right 
F\o lner sequence for a right amenable triple $(G,H,L)$ and $(\cH,\pi)$
is a unitary representation of $G$ with a $\pi(H)$-invariant unit vector
$v$, then
\[
\lim_n \frac{1}{|F_n|} \sum_{s \in F_n} \pi(s)v = P_L v
\]
in the norm topology on $\cH$. We note that it is enough to prove this
for a set of vectors $v$ in $\cH$ which span a norm-dense subspace of 
$\cH$. The proposition above asserts that the set 
\[
S = \big\{ v + w - \pi(s_o)w \, : \, v \in \cH^{L}, w \in \cH^{H} \qand s_o \in L \big\}
\]
spans a norm-dense subspace of $\cH$, and 
\[
\Big\|
\frac{1}{|F_n|} \sum_{s \in F_n} \big(\pi(s)v + \pi(s)w - \pi(ss_o)w\big) - v
\Big\|
\leq 
\frac{|F_n \triangle \alpha_H(s_o)F_n|}{|F_n|} \ra 0,
\]
as $n$ tends to infinity, since $\pi(s)v = v$ for all $s \in F_n \subset L/H$
and $P_L(v+w-\pi(s_o)w) = v$ and $(F_n)$ is a 
right F\o lner sequence for $(G,H,L)$, 
which finishes the proof of Theorem 
\ref{rset}.

\begin{proof}[Proof of Proposition \ref{osplit}]
We note that it suffices to show that if $v \in \cH^H$ is orthogonal to all
elements of the form $w-\pi(s)w$ with $w \in \cH^H$ and $s \in L$, then
$w$ is $\pi(L)$-invariant. Since
\[
\langle v, w-\pi(s)w \rangle = \langle v-\pi(s^{-1})v,w \rangle = 0
\quad \textrm{for all $w \in \cH^H$ and $s \in L$},
\]
and 
\[
v - \pi(s^{-1})v \in \cH^H \quad \textrm{for all $s \in L < N_G(H)$},
\]
we conclude
that $v = \pi(s)v$ for all $s \in L$.
\end{proof}

\section{Proof of the Transfer Principle}
\label{sec:transfer}
We note that if the Left Strong Ergodic Theorem holds for a countable 
amenable group $G$, then every left F\o lner sequence in $G$ is a
firm strong sequence, so to prove Proposition \ref{transfer0}
it suffices to establish the following "local" transfer principle: 

\begin{proposition}[Local transfer principle]
Let $G$ be a countable amenable group and let $(\cH,\pi)$
be a unitary representation of $G$. Suppose that there exists
at least one firm strong sequence (not necessarily a left F\o lner
sequence) in $G$  with respect to $(\cH,\pi)$. 
Then the Left Strong Ergodic Theorem holds for $(\cH,\pi)$.
\end{proposition}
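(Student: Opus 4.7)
The plan is to compare an arbitrary left F\o lner sequence $(F_n)$ in $G$ against the given firm strong sequence $(K_m)$ via the double averages
\[
B_{n,m} \;=\; \frac{1}{|K_m|\,|F_n|}\sum_{s\in F_n}\sum_{t\in K_m}\pi(ts)v,
\]
and to estimate them by performing the two sums in opposite orders. Writing $A_n = \frac{1}{|F_n|}\sum_{s\in F_n}\pi(s)v$, I want to show $A_n\to P_G v$ in norm for every $v\in\cH$, and I will achieve this by sandwiching $A_n$ between $B_{n,m}$ (close to $A_n$ for fixed $m$ as $n\to\infty$) and $P_G v$ (close to $B_{n,m}$ uniformly in $n$ as $m\to\infty$).

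First, I would reinterpret firmness as a uniform statement. Set $g_m(s) = \frac{1}{|K_m|}\sum_{t\in K_m}\pi(ts)v$. The definition of firm strong says $g_m(s_m)\to P_G v$ in norm for \emph{every} sequence $(s_m)\subset G$; a routine diagonal extraction then converts this into
\[
h_m \;:=\; \sup_{s\in G}\|g_m(s)-P_G v\| \;\longrightarrow\; 0 \quad \textrm{as } m\to\infty.
\]
Performing the inner sum in $B_{n,m}$ first displays it as $\frac{1}{|F_n|}\sum_{s\in F_n}g_m(s)$, so the triangle inequality immediately yields $\|B_{n,m}-P_G v\|\le h_m$ for every $n$. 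This is the payoff of firmness.

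Second, I would perform the outer sum in $B_{n,m}$ first: the change of variables $s'=ts$ rewrites the bracketed average as $\frac{1}{|F_n|}\sum_{s'\in tF_n}\pi(s')v$, which differs from $A_n$ in norm by at most $(|tF_n\triangle F_n|/|F_n|)\,\|v\|$. Summing over $t\in K_m$ gives
\[
\|B_{n,m}-A_n\| \;\le\; \frac{\|v\|}{|K_m|}\sum_{t\in K_m}\frac{|tF_n\triangle F_n|}{|F_n|},
\]
and because $K_m$ is \emph{finite} and $(F_n)$ is a left F\o lner sequence, this tends to zero as $n\to\infty$ for each fixed $m$. This is exactly the place where the asymmetric hypothesis on $(F_n)$ is consumed, and it also explains why the \emph{left} F\o lner condition (not the right) is the correct one to match the order in which the sums collapse.

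Combining the two bounds by an $\eps/2$ argument, one chooses $m$ so large that $h_m<\eps/2$ and then $n$ so large that $\|B_{n,m}-A_n\|<\eps/2$, forcing $\|A_n-P_G v\|<\eps$ for all sufficiently large $n$, which is the desired conclusion. I expect the only subtle point in the whole argument to be the upgrade of firmness from its sequential formulation to genuine uniformity over $G$; the other steps are standard rearrangements, and the art lies in having chosen the symmetric object $B_{n,m}$ so that the two orderings of summation exploit exactly the two hypotheses available.
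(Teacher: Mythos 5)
Your proof is correct, and it takes a genuinely different route from the paper's. You first upgrade firmness to a uniform statement $h_m=\sup_{s\in G}\|g_m(s)-P_Gv\|\to 0$ (this is legitimate: if it failed, choosing witnesses $s_{m_k}$ along a subsequence and filling in arbitrarily elsewhere would produce a sequence violating the definition of a firm strong sequence — it is a contradiction/witness argument rather than a diagonal extraction, but routine as you say), and then you sandwich $A_n$ between the double average $B_{n,m}$ and $P_Gv$, consuming the left F\o lner property of $(F_n)$ only in the comparison $\|B_{n,m}-A_n\|\le \frac{\|v\|}{|K_m|}\sum_{t\in K_m}|tF_n\triangle F_n|/|F_n|$. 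The paper argues differently: it reduces to vectors with $P_Gv=0$, passes to the correlation function $\phi(s,t)=\Re\langle\pi(s)v,\pi(t)v\rangle$ on $G\times G$, and invokes a separate comparison lemma (Lemma \ref{transferlemma}, a pigeonhole argument over F\o lner averages of a bounded function) to dominate $\varlimsup_n$ of the averages over $F_n\times F_n$ by $\varliminf_m$ of the averages over suitably right-translated sets $K_m\times K_m$, where firmness is applied only along that one well-chosen sequence of translates. Your argument is more elementary and self-contained — no passage to $G\times G$, no correlation function, and no auxiliary lemma — while the paper's route produces Lemma \ref{transferlemma} as a reusable tool, which it needs again in the proof of Corollary \ref{redicc}; both approaches use exactly the same two hypotheses, just packaged differently (uniformity over all translates versus existence of good translates).
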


We shall begin by reducing the proof of the proposition to a general 
lemma about left F\o lner sequences in amenable groups which 
should be interesting in its own right. \\

Suppose that we are given a unit vector $v \in \cH$ and define the bounded real-valued function
\[
\phi(s,t) = \Re \langle \pi(s)v, \pi(t)v \rangle, 
\quad 
\textrm{for $(s,t) \in G \times G$},
\]
where $\Re$ denotes the real part of a complex number. \\

Let us now suppose that $(K^o_m)$ is a firm strong sequence in $G$ with respect to the unitary representation $(\cH,\pi)$ of $G$. We note that for 
every pair of sequences $(s^o_m)$ and $(t^o_m)$ in $G$, 
\begin{eqnarray*}
 \Big| \frac{1}{|K^o_m|^2} 
\sum_{(s,t) \in K^o_m \times K^o_m}
\phi(ss^o_m,tt^o_m) \Big|
&=&
\Big| \Re 
\Big\langle  
\frac{1}{|K^o_m|} \sum_{s \in K^o_m} \pi(ss^o_m)v, 
\frac{1}{|K^o_m|} \sum_{t \in K^o_m} \pi(tt^o_m)v
\Big\rangle \Big| \\
&\leq &
\Big\| \, \frac{1}{|K^o_m|} \sum_{s \in K^o_m} \pi(ss^o_m)v \, \Big\|
\ra 0.
\end{eqnarray*}
We wish to prove that if 
$(F_n^o)$ is a left F\o lner sequence in $G$, then 
\[
\varlimsup_n 
\Big\| \frac{1}{|F_n^o|} \sum_{s \in F^o_n} \pi(s)v\Big\|^2 
= 
\varlimsup_n 
\frac{1}{|F_n^o|^2}\sum_{(s,t) \in F^o_n \times F^o_n} \phi(s,t) = 0,
\]
where the first equality sign stems from 
\[
\sum_{(s,t) \in F^o_n \times F^o_n} 
\langle \pi(s) v, \pi(t) v \rangle
= 
\sum_{(s,t) \in F^o_n \times F^o_n} 
\frac{1}{2} \big( \langle \pi(s) v, \pi(t) v \rangle + \langle \pi(t) v, \pi(s) v \rangle\big)
= 
\sum_{(s,t) \in F_n^o \times F_n^o} \phi(s,t),
\]
since the inner product on $\cH$ is skew-hermitian. \\

Proposition \ref{transfer0} is now an immediate consequence of  
the following lemma (which is a variation of Lemma 3.3 in \cite{BBF}) 
applied to the (amenable) direct product group $G \times G$, the sequences 
\[
K_m = K_m^o \times K_m^o \qand F_n = F_n^o \times F_n^o,
\]
in $G \times G$ and the bounded real-valued function $\phi$ on $G \times G$ defined above. We note that $(F_n)$ is a left F\o lner sequence in $G \times G$.

\begin{lemma}
\label{transferlemma}
Let $G$ be a countable amenable group and 
fix a left F\o lner sequence $(F_n)$ in $G$ and a bounded 
real-valued function $\phi$ on $G$.  Then, for every sequence $(K_m)$ 
of finite subsets of $G$, there exists a sequence $(t_m)$ in $G$ such 
that 
\[
\varlimsup_n \frac{1}{|F_n|} \sum_{t \in F_n} \phi(t)
\leq
\varliminf_m \frac{1}{|K_m|} \sum_{s \in K_m} \phi(st_m).
\]
\end{lemma}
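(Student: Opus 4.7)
The plan is to use a standard ``average of averages'' trick that converts the left F\o lner property of $(F_n)$ into a statement about shifted averages of $\phi$. Set $a := \varlimsup_n \frac{1}{|F_n|} \sum_{t \in F_n} \phi(t)$ and, for each index $m$, introduce the bounded function
\[
g_m(t) := \frac{1}{|K_m|} \sum_{s \in K_m} \phi(st), \quad t \in G.
\]
The goal is to exhibit, for each $m$, an element $t_m \in G$ with $g_m(t_m) \geq a - 1/m$; once this is done, $\varliminf_m g_m(t_m) \geq a$ and the lemma follows.

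To produce such $t_m$, I would compute the double average by Fubini:
\[
\frac{1}{|F_n|} \sum_{t \in F_n} g_m(t) = \frac{1}{|K_m|} \sum_{s \in K_m} \frac{1}{|F_n|} \sum_{u \in sF_n} \phi(u).
\]
For each fixed $s \in G$, the left F\o lner property gives
\[
\left| \frac{1}{|F_n|} \sum_{u \in sF_n} \phi(u) - \frac{1}{|F_n|} \sum_{u \in F_n} \phi(u) \right| \leq \|\phi\|_\infty \cdot \frac{|sF_n \triangle F_n|}{|F_n|} \longrightarrow 0,
\]
and since $K_m$ is a finite set, this convergence is uniform over $s \in K_m$. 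Consequently,
\[
\varlimsup_n \frac{1}{|F_n|} \sum_{t \in F_n} g_m(t) = \varlimsup_n \frac{1}{|F_n|} \sum_{t \in F_n} \phi(t) = a.
\]

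Finally, for each $m$ I would pick $n_m$ so large that $\frac{1}{|F_{n_m}|} \sum_{t \in F_{n_m}} g_m(t) \geq a - 1/m$. Any average which is at least $a - 1/m$ must have at least one summand at least this large, so some $t_m \in F_{n_m}$ satisfies $g_m(t_m) \geq a - 1/m$, producing the required sequence. The only potential pitfall is ensuring that the F\o lner correction kicks in uniformly over the translates by $s \in K_m$ for each fixed $m$, but this is automatic because $|K_m| < \infty$; no uniformity in $m$ is required since the choice of $t_m$ is made one index at a time.
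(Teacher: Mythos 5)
Your proof is correct and follows essentially the same route as the paper: swap the order of summation, use the left F\o lner property (which is uniform over the finite set $K_m$ for each fixed $m$) to show the double average has the same $\varlimsup$ as the plain average, and then pigeonhole to extract $t_m \in F_{n_m}$. The only cosmetic difference is that the paper first passes to a subsequence realizing the $\varlimsup$ as a limit, whereas you work with the $\varlimsup$ directly; the substance is identical.
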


\begin{proof}
We extract a subsequence $(n_j)$ such that
\[
\beta = \varlimsup_n \frac{1}{|F_n|} \sum_{t \in F_n} \phi(t) = 
\lim_j \frac{1}{|F_{n_j}|} \sum_{t \in F_{n_j}} \phi(t).
\]
Since $(F_{n_j})$ is a \emph{left} F\o lner sequence in $G$ and $\phi$ is bounded and real-valued, we can find, for every integer $m$, an index $j$ such that
\[
\frac{1}{|F_{n_j}|} \sum_{t \in F_{n_j}} \phi(st) \geq \beta - \frac{1}{m}
\]
for all $s \in K_m$, and thus
\[
\frac{1}{|K_m|} \sum_{s \in K_m} 
\Big( 
\frac{1}{|F_{n_j}|} \sum_{t \in F_{n_j}} \phi(st) 
\Big)
= 
\frac{1}{|F_{n_j}|} \sum_{t \in F_{n_j}} 
\Big( 
\frac{1}{|K_m|} \sum_{s \in K_m} 
\phi(st) 
\Big)
\geq 
\beta - \frac{1}{m}.
\]
In particular, there must exist at least one element $t_m \in F_{n_j}$ such that
\[
\frac{1}{|K_m|} \sum_{s \in K_m} 
\phi(st_m)  \geq \beta - \frac{1}{m},
\]
which shows that 
\[
\varliminf_m \frac{1}{|K_m|} \sum_{s \in K_m} 
\phi(st_m)  \geq \beta.
\]
\end{proof}

\section{A Left Strong Ergodic Theorem for nilpotent groups}
\label{sec:lseticc}

The aim of this section is to prove Theorem \ref{lset}, but before we can 
do this we need to show that every countable group $G$ without ICC quotients can be decomposed as a (possibly) transfinite increasing 
chain of \emph{normal} subgroups $(G_\alpha)$ so that for every 
unitary representation $(\cH,\pi)$ of the group $G$, the Left Strong Ergodic Theorem 
is immediate for elements in the unitary \emph{sub-representation} 
$\cobnd_{(G,G_\alpha)}(\cH,\pi)$ for every $\alpha$. By a 
rather straightforward approximation argument, this yields the Left 
Strong Ergodic Theorem for the whole group $G$. \\

Let $G$ be a countable group and let $H$ be a \emph{normal} 
subgroup of $G$. We denote by $p_H$ the canonical quotient 
homomorphism from $G$ onto $G/H$, and for $x \in G$ we 
write $x^G$ for the conjugation class of $x$, i.e. the set of all
conjugates of $x$ in $G$. We define 
\[
\FC_G(H) = \big\{ x \in G \, : \, p_H(x^G) \subset G/H \quad  \textrm{is finite} \big\},
\]
and we note that since $H$ is normal, $\FC_G(H)$ is again a 
\emph{normal} subgroup of $G$. In particular, if $H$ is the 
trivial subgroup, then $\FC_G(H)$ consists of exactly those 
elements in $G$ with a finite conjugation class. \\

We can also 
iterate this construction: Set $\FC^{1}_G(H) = \FC_G(H)$ and
for every integer $k \geq 1$, we define
\[
\FC_G^{(k+1)}(H) = \FC_G(\FC_G^{(k)}(H)).
\]
More generally, if $\alpha$ is an ordinal, then we define
\[
\FC_G^{(\alpha+1)}(H) = \FC_G(\FC_G^{(\alpha)}(H))
\qand 
\FC^{(\alpha)}(G) = \FC^{(\alpha)}_G(\{e\}).
\]
We note that $\FC^{(\alpha)}_G(H) \subset \FC_{G}^{(\alpha+1)}(H)$ for every 
ordinal $\alpha$, so if $\lambda$ is a limit ordinal, then it is 
well-defined to set 
\[
\FC_G^{(\lambda)}(H) = \bigcup_{\alpha < \lambda} \FC_G^{(\alpha)}(H).
\]
Since $G$ is a countable, the least ordinal $\alpha_H$ such that
\[
\FC_G^{(\alpha_H+1)}(H) = \FC_G^{(\alpha_H)}(H)
\]
is countable, and we set $G_{\textrm{icc}} = \FC^{\alpha_{\{e\}}}(G)$. We 
note that every non-identity element (if any) in the  quotient group 
$G/G_{\textrm{icc}}$ must have an infinite conjugation class, so we have
thus proved the following proposition.

\begin{proposition}[Characterization of ICC quotients]
A countable group $G$ does not admit any ICC quotients if and only if there exists an ordinal $\alpha$ such that $\FC^{\alpha_{\{e\}}}(G) = G$.
\end{proposition}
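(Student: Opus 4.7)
The plan is to prove both implications by transfinite induction along the ascending FC-series constructed in the excerpt, using only two elementary structural facts: that the FC-operation preserves normality in $G$, and that a finite conjugation class in a quotient $G/M$ remains a finite conjugation class in any further quotient $G/N$ with $M \subset N$.

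For the ``if'' direction, I would assume $\FC_G^{(\alpha_{\{e\}})}(G) = G$ and suppose towards a contradiction that $G/N$ is a nontrivial ICC quotient for some proper normal subgroup $N$ of $G$. I would then show by transfinite induction on $\beta$ that $\FC_G^{(\beta)}(G) \subset N$ for every ordinal $\beta$. The base case $\beta = 0$ is trivial; for the successor step, if $x \in \FC_G(\FC_G^{(\beta)}(G))$ then the image of the conjugation class $x^G$ in $G/\FC_G^{(\beta)}(G)$ is finite, and since $\FC_G^{(\beta)}(G) \subset N$, its further image $p_N(x^G)$ in $G/N$ is also finite; if $x \notin N$ this would exhibit a nontrivial element of $G/N$ with a finite conjugation class, contradicting the ICC assumption. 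The limit case is immediate from the union definition of $\FC_G^{(\lambda)}$. Specialising to $\beta = \alpha_{\{e\}}$ then yields $G \subset N$, contradicting the properness of $N$.

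For the ``only if'' direction, I would set $N := \FC_G^{(\alpha_{\{e\}})}(G)$, which by the defining property of $\alpha_{\{e\}}$ satisfies $\FC_G(N) = N$. A short transfinite induction, using that $\FC_G(M)$ is normal in $G$ whenever $M$ is, shows that each $\FC_G^{(\beta)}(G)$, and hence $N$ itself, is a normal subgroup of $G$, so that the quotient $G/N$ makes sense. The key observation is then that $G/N$ is automatically ICC: a coset $gN$ has finite conjugation class in $G/N$ precisely when $p_N(g^G)$ is finite, equivalently when $g \in \FC_G(N) = N$; so every nontrivial coset has infinite conjugation class. Since by hypothesis $G$ admits no nontrivial ICC quotient, $G/N$ must be trivial, i.e.\ $N = G$.

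The main obstacle is really just the bookkeeping with the transfinite hierarchy — in particular, verifying that the normality of each $\FC_G^{(\beta)}(G)$ is preserved through successor and limit stages so that the FC-operation continues to make sense at every step, and that the ordinal $\alpha_{\{e\}}$ (which is countable since $G$ is countable) is reached without any set-theoretic subtleties. Beyond the definitions given in the excerpt, no additional machinery is needed.
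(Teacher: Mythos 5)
Your proof is correct and follows essentially the same route as the paper: the ``only if'' direction is exactly the paper's observation that $G/\FC^{(\alpha_{\{e\}})}(G)$ is ICC unless it is trivial, and your ``if'' direction (transfinite induction showing each $\FC^{(\beta)}(G)$ lies in the kernel of any ICC quotient) is the standard argument the paper leaves implicit. Your write-up merely makes explicit the normality bookkeeping and the converse step that the paper compresses into one sentence.
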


\begin{remark}
We note that every countable group all of whose conjugation classes are 
finite is amenable. Indeed, if $L$ is such a group, then there exists an 
increasing exhaustion $(F_n)$ of $L$ by \emph{finite} conjugation-invariant
subsets. Hence we can identify the image of the conjugation action map 
$p : L \ra \Sym(L)$ with the increasing union of the \emph{finite} groups 
$\Sym(F_n)$, which is clearly amenable. Since the kernel of the action map 
$p$ equals the center of $L$ (which is abelian), we conclude that $L$ is 
amenable.  

Now, if $G$ is any countable group and $H$ is a normal subgroup of $G$, then $\FC_G(H)/H$ is clearly isomorphic to $\FC_{G/H}(\{e\})$, which is amenable 
by the discussion above, and thus $\FC_G(H)$ is amenable whenever $H$ is amenable. 
\end{remark}

\begin{example}[Nilpotent groups]
We recall that a countable group is \emph{nilpotent} if its upper central
series terminates after a finite number of steps, that is to say, if we 
define $Z_o = \{e\}$ and
\[
Z_{i+1} 
= 
\big\{ 
s \in G \, : \, [s,t] \in Z_{i} \quad \textrm{for all $t \in G$}
\big\},
\]
then there exists an integer $k$ such that
\[
\{e\} = Z_o \triangleleft Z_1 \triangleleft \ldots \triangleleft Z_k = G.
\]
Since $Z_i \subset \FC^{(i)}(G)$ for every $i$, we have $G = \FC^{(k)}(G)$ as well. More generally, we say that $G$ is \emph{virtually nilpotent} if it contains a nilpotent subgroup $G_o$ of finite index. We leave it as an 
exercise to verify that $\FC^{(k)}(G) = G$ whenever $\FC^{(k)}(G_o) = G_o$,
and thus virtually nilpotent groups do not admit any ICC quotients either.
\end{example}

We shall now turn to the proof of Theorem \ref{lset}. Let $G$ be a countable
group which does not admit any ICC quotient groups and suppose that
$(\cH,\pi)$ is a unitary representation of $G$. By Proposition \ref{osplit} 
we can write 
\[
\cH 
= 
\cH^{G_1} \oplus \cobnd_{(G_1,G_o)}(\cH,\pi)
=
\ldots
= 
\cH^{G_{k-1}} 
\oplus 
\Big( 
\bigoplus_{i=0}^{k-2} \cobnd_{(G_{i+1},G_i)}(\cH,\pi) 
\Big)
\]
for every $k$, where $G_i = \FC^{(i)}(G)$. For a fixed integer $k$, we set 
\[
\cH_k = \bigoplus_{i=0}^{k-2} \cobnd_{(G_{i+1},G_i)}(\cH,\pi) 
\]
and define inductively for any ordinal $\alpha$,
\[
\cH_{\alpha+1} = \cH_{\alpha} 
\oplus 
\cobnd_{(G_{\alpha+1},G_{\alpha})}(\cH,\pi). 
\]
If $\lambda$ is a limit ordinal, we set
\[
\cH_\lambda = \overline{\bigcup_{\alpha < \lambda} \cH_\alpha} \subset \cH.
\]
If we assume that the representation $(\cH,\pi)$ does not have any non-zero $\pi(G)$-invariant 
vectors, then $\cH = \cH_\lambda$, where $\lambda$ is the 
smallest ordinal such that $G = G_{\lambda}$. \\

We wish to prove that if $(F_n)$ is any \emph{right} F\o lner sequence in $G$
and $(s_n)$ is any sequence in $G$, then 
\begin{equation}
\label{check}
\varlimsup_n \Big\|\frac{1}{|F_n|} \sum_{s \in F_n} \pi(ss_n)v \Big\| = 0,
\quad \textrm{for all $v \in \cH$}
\end{equation}
We note that it is enough to establish these limits for a
dense subspace of $\cH$, so in particular we only need to calculate the limit
above for vectors of the form $v_\alpha - \pi(t_\alpha)v_\alpha$, 
where  $v_\alpha$ belongs to $\cH^{G_\alpha}$ and 
$t_\alpha$ is in $G_{\alpha+1}$ for some ordinal $\alpha$. \\

We fix a vector of this form and assume that \eqref{check} fails for some sequence $(s_n)$, that is to say, possibly upon passing to a subsequence, 
we assume that
\[
\lim_n \Big\|\frac{1}{|F_n|} 
\sum_{s \in F_n} 
\big(\pi(ss_n)v_\alpha - \pi(ss_n)\pi(t_\alpha)v_\alpha\big) \Big\| > 0.
\]
We recall that since $t_\alpha \in G_{\alpha+1}$, the set
\[
S_\alpha = \big\{ p_{G_\alpha}\big(s_nt_\alpha s_n^{-1}\big) \, : \, n \geq 1 \big\}
\subset G/G_\alpha
\]
is finite, and thus, possibly upon passing to a further subsequence, we may 
assume that the identities $s_n t_\alpha s_n^{-1} = u_\alpha$ hold modulo $G_\alpha$ for 
all $n$ and for some element $u_\alpha \in G_{\alpha+1}$. Hence, along this
subsequence, we have
\[
\frac{1}{|F_n|} 
\sum_{s \in F_n} 
\big(\pi(ss_n)v_\alpha - \pi(ss_n)\pi(t_\alpha)v_\alpha\big)
= 
\frac{1}{|F_n|} 
\sum_{s \in F_n} 
\big(\pi(s) - \pi(su_\alpha)\big) \, \pi(s_n) v_\alpha,
\]
since $v_\alpha$ is $G_\alpha$-invariant, and we conclude that
\[
\Big\|
\frac{1}{|F_n|} 
\sum_{s \in F_n} 
\big(\pi(s) - \pi(su_\alpha)\big) \, \pi(s_n) v_\alpha
\Big\|
= 
\Big\|
\frac{1}{|F_n|} 
\sum_{s \in F_n \triangle F_n u_\alpha} \pi(ss_n)v_\alpha 
\Big\|
\leq \frac{|F_n \triangle F_n u_\alpha|}{|F_n|} \ra 0,
\]
since $(F_n)$ is a right F\o lner sequence in $G$. This contradiction 
finishes the proof of Theorem \ref{lset}. \\

In fact, the arguments above also yield the following strengthening of 
Theorem \ref{lset}. 

\begin{proposition}
\label{orticc}
Let $G$ be a countable amenable group and suppose that $(\cH,\pi)$
is a unitary representation of $G$. Then there exists a normal 
subgroup $G_{icc}$ of $G$ with the property that the quotient group
$G/G_{icc}$ is either 
trivial or ICC and a closed $\pi(G)$-invariant 
subspace $\cH_o \subset \cH$ such that 
\[
\cH = \cH^{G_{icc}} \oplus \cH_o,
\]
and the Left Strong Ergodic Theorem holds for $(\cH_o,\pi)$. 
\end{proposition}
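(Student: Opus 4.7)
The plan is to build $G_{icc}$ as the transfinite $\FC$-closure of $\{e\}$, orthogonally decompose $\cH$ through iterated applications of Proposition \ref{osplit}, and then invoke the local transfer principle from Section \ref{sec:transfer} after verifying that a right F\o lner sequence in $G$ is firm strong for the complementary subrepresentation. The skeleton of the third step is essentially the final computation in the proof of Theorem \ref{lset}, now carried out on the non-ICC part of $\cH$ rather than on all of $\cH$.

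First, I would define normal subgroups $G_0 = \{e\}$, $G_{\alpha+1} = \FC_G(G_\alpha)$, and $G_\lambda = \bigcup_{\alpha < \lambda} G_\alpha$ at limit ordinals. An easy induction shows each $G_\alpha$ is normal since $\FC$ preserves normality, and countability of $G$ forces stabilization at some countable ordinal $\alpha^{*}$. Set $G_{icc} := G_{\alpha^{*}}$. By construction $\FC_{G/G_{icc}}(\{e\}) = \{e\}$, so $G/G_{icc}$ is either trivial or ICC.

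Second, since $G_\alpha \triangleleft G$, we have $G_\alpha < G_{\alpha+1} \le G \subseteq N_G(G_\alpha)$, so Proposition \ref{osplit} gives
\[
\cH^{G_\alpha} = \cH^{G_{\alpha+1}} \oplus \cobnd_{(G_{\alpha+1}, G_\alpha)}(\cH,\pi).
\]
Combined with the identity $\cH^{G_\lambda} = \bigcap_{\alpha < \lambda} \cH^{G_\alpha}$ at limit ordinals, a transfinite induction produces
\[
\cH = \cH^{G_{icc}} \oplus \cH_o, \qquad \cH_o := \overline{\bigoplus_{\alpha < \alpha^{*}} \cobnd_{(G_{\alpha+1}, G_\alpha)}(\cH,\pi)}.
\]
Normality of $G_{icc}$ makes $\cH^{G_{icc}}$, and hence $\cH_o$, $\pi(G)$-invariant. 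In particular $\cH_o$ has no non-zero $\pi(G)$-invariant vectors, since $\cH^G \subseteq \cH^{G_{icc}}$.

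Third, to establish LSET for $(\cH_o, \pi)$, by the local transfer principle it suffices to exhibit one firm strong sequence with respect to $(\cH_o, \pi)$. I would take any right F\o lner sequence $(F_n)$ in $G$ (available since $G$ is amenable) and verify the firm strong property. Vectors of the form $v_\alpha - \pi(t_\alpha) v_\alpha$ with $v_\alpha \in \cH^{G_\alpha}$ and $t_\alpha \in G_{\alpha+1}$ span a norm-dense subspace of $\cH_o$, so by the unit-norm bound on averaging operators it is enough to handle these. For any sequence $(s_n)$ in $G$, the image of the $G$-conjugacy class of $t_\alpha$ in the quotient $G/G_\alpha$ is finite by definition of $\FC_G(G_\alpha)$, so after passing to a subsequence we may assume $s_n t_\alpha s_n^{-1} \equiv u_\alpha \pmod{G_\alpha}$ for a single $u_\alpha \in G_{\alpha+1}$. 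Using the $G_\alpha$-invariance of $v_\alpha$, the averages simplify to
\[
\frac{1}{|F_n|} \sum_{s \in F_n \triangle F_n u_\alpha} \pi(s s_n) v_\alpha,
\]
whose norm is bounded by $|F_n \triangle F_n u_\alpha|/|F_n| \to 0$ by the right F\o lner property; this reproduces verbatim the final computation in the proof of Theorem \ref{lset}.

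The main obstacle is the transfinite bookkeeping in the second step: one has to check that at every limit ordinal $\lambda$, the orthogonal complement of $\cH^{G_\lambda}$ in $\cH$ really is the closed Hilbert sum of all pieces $\cobnd_{(G_{\beta+1}, G_\beta)}(\cH,\pi)$ with $\beta < \lambda$. This follows from $\cH^{G_\lambda} = \bigcap_{\alpha < \lambda} \cH^{G_\alpha}$ together with an elementary observation about decreasing nets of closed subspaces, and countability of $\alpha^{*}$ (inherited from countability of $G$) guarantees that the induction terminates.
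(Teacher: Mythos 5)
Your proposal is correct and follows essentially the same route as the paper: the transfinite $\FC$-chain terminating in $G_{icc}$, the iterated orthogonal splitting via Proposition \ref{osplit} into $\cH^{G_{icc}}$ and the closed sum of the spaces $\cobnd_{(G_{\alpha+1},G_\alpha)}(\cH,\pi)$, the finiteness of $p_{G_\alpha}(s_n t_\alpha s_n^{-1})$ to reduce to a single $u_\alpha$, and the right F\o lner bound $|F_n \triangle F_n u_\alpha|/|F_n| \to 0$, followed by the local transfer principle. The only additions are the (correct) explicit verifications of the limit-ordinal bookkeeping and of the $\pi(G)$-invariance of $\cH_o$, which the paper leaves implicit.
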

\subsection{An outline of the proof of Corollary \ref{redicc}}
\label{subsec:redicc}
Let $G$ be a countable
amenable group and assume that $(\cH,\pi)$ is a unitary representation of 
$G$ with no non-zero $\pi(G)$-invariant vectors. By Proposition 
\ref{orticc} there exists a closed $\pi(G)$-invariant subspace $\cH_o$ of
$\cH$ so 
that the Left Strong Ergodic Theorem holds for $(\cH_o,\pi)$ and 
\[
\cH = \cH^{G_\textrm{icc}} \oplus \cH_o,
\]
where $G_{icc} < G$ is a normal subgroup with the property that the quotient 
group $G/G_{icc}$ is either trivial or ICC. \\

In particular, if we assume that there exists a left F\o lner sequence $(F_n)$
in $G$ such that the Left Strong Ergodic Theorem fails for 
$(\cH,\pi)$ along this sequence, then we can conclude that 
$G_{\textrm{icc}} \neq G$ and that the Left Strong Ergodic Theorem fails 
for $(\cH^{G_{icc}},\pi)$ along the sequence $(F_n)$. That is to say, for some unit vector $v \in \cH^{G_{icc}}$, we have
\[
\delta = \varlimsup_n \Big\| \frac{1}{|F_n|} \sum_{s \in F_n} \pi(s)v \Big\| > 0.
\]
Clearly, the unitary representation $(\cH^{G_{icc}},\pi)$ of $G$ factors through 
a unitary representation $\pi'$ on $\cH' = \cH^{G_{icc}}$ of the quotient 
group $G/G_{icc}$, and we can write
\[
\frac{1}{|F_n|} \sum_{s \in F_n} \pi(s)v = \sum_{t \in G/G_{icc}} \beta_n(t) \, \pi'(t)v,
\]
where
\[
\beta_n(tG_{icc}) = \frac{1}{|F_n|} \, \sum_{s \in G_{icc}} \chi_{F_n}(ts),
\quad 
\textrm{for all $tG_{icc} \in G/G_{icc}$}.
\]
One can readily verify that $(\beta_n)$ satisfies
\[
\lim_n \sum_{t \in G/G_{icc}} \big| \beta_n(ut) - \beta_n(t) \big| = 0
\quad
\textrm{for all $u \in G/G_{icc}$},
\]
and 
\[
\varlimsup_n \sum_{s,t \in G/G_{icc}} \beta_n(s) \, \beta_n(t) \, \, 
\Re \langle \pi'(s) v, \pi'(t) v \rangle = \delta^2 > 0,
\]
where $\Re$ denotes the real part of a complex number. \\

In particular, any accumulation point $\lambda$ of the sequence 
\[
\beta_n \otimes \beta_n  
\in \ell^1(G/G_{icc} \times G/G_{icc}) \subset \ell^1(G/G_{icc} \times G/G_{icc})^{**}
\] 
is invariant under left translations by elements in 
the direct product group $G/G_{icc} \times G/G_{icc}$. These limits 
are often referred to as \emph{left invariant means} on 
$G/G_{icc} \times G/G_{icc}$, and we note that 
\[
\lambda(\phi) \geq \frac{1}{2} \cdot \delta^2,
\] 
where $\phi$ is the bounded real-valued function on $G/G_{icc} \times G/G_{icc}$ defined by
\[
\phi(s,t) = \Re \, \langle \pi'(s) v, \pi'(t) v \rangle, \quad 
\textrm{for all $(s,t) \in G/G_{icc} \times G/G_{icc}$}.
\]
We stress that this expression is well-defined since $v$ is assumed to 
be invariant under $G_{icc}$. \\

A classical argument by I. Namioka in \cite{Nam} will now guarantee that
we can find a left F\o lner sequence $(L_n)$ in $G/G_{icc} \times G/G_{icc}$ such that 
\[
\varlimsup_n \frac{1}{|L_n|} \sum_{(s,t) \in L_n} \phi(s,t) \geq \frac{1}{2} \cdot \delta^2.
\]
Let $(F'_n)$ be any left F\o lner sequence in $G/G_{icc}$. By Lemma \ref{transferlemma} we
can now find a sequence of elements $(s_n,t_n)$ in the direct product 
group $G/G_{icc} \times G/G_{icc}$ such
that
\[
0 < \frac{1}{2} \cdot \delta^2 \leq \varlimsup_n \frac{1}{|L_n|} \sum_{(s,t) \in L_n} \phi(s,t) 
\leq 
\varliminf_n \frac{1}{|F'_n|^2} \sum_{(s,t) \in F'_n \times F'_n} \phi(ss_n,tt_n). 
\]
Assume that the Left Strong Ergodic Theorem holds for the unitary 
representation $(\cH^{G_{icc}},\pi')$ of $G/G_{icc}$. Then, 
\[
\lim_n \frac{1}{|F'_n|} \sum_{s \in F'_n} \pi(ss_n)v = 0
\qand
\lim_n \frac{1}{|F'_n|} \sum_{s \in F'_n} \pi(st_n)v = 0
\]
in the norm topology on $\cH$, which implies that 
\[
\frac{1}{|F'_n|^2} \sum_{(s,t) \in F'_n \times F'_n} \phi(ss_n,tt_n).
=
\Big\langle 
\frac{1}{|F'_n|} \sum_{s \in F'_n} \pi'(ss_n)v, \frac{1}{|F'_n|} \sum_{s \in F'_n}\pi'(st_n)v
\Big\rangle
= 0.
\]
This contradiction shows that the Left Strong Ergodic Theorem for $(\cH^{G_{icc}},\pi')$ must fail
at least along one of the left F\o lner sequences $(F'_n s_n)$ or $(F'_n t_n)$,
and finishes the proof of Corollary \ref{redicc}.

\section{A Left Strong  Ergodic Theorem for induced representations}
\label{sec:induced}
The aim of this section is to prove Proposition \ref{induced}. We begin
by recalling some basic properties of induced representations. \\

Let $G$ be a countable group. If $(\cH_o,\pi_o)$ is a unitary 
representation of a subgroup $L < G$, one can induce
a unitary representation of $G$ from $(\cH_o,\pi_o)$ as 
follows. We define the linear space
\[
\cH' = 
\Big\{ 
f : G \ra  \cH_o \, : \, \|f\| < \infty \qand f(lg) = \pi_o(l)^{-1}f(g), 
\quad 
\textrm{for all $l \in L$ and $g \in G$}
\Big\}.
\]
Here, $\|\cdot\|$ denotes the norm induced from the following 
inner product 
\[
\langle f,f' \rangle = \sum_{x \in L \backslash G} \langle f(x), f'(x) \rangle_o,
\]
on $\cH$, where $\langle \cdot , \cdot \rangle_o$ is the inner product on 
$\cH_o$. We readily check that $\cH'$ is a pre-Hilbert space and we denote
by $\cH$ the Hilbert space completion on $\cH'$. We note that $G$ 
acts on $\cH'$ (and thus on $\cH$) via the representation
\[
(\pi(g)f)(x) = f(xg), \quad \textrm{for $g, x \in G$},
\]
which is unitary with respect to $\langle \cdot, \cdot \rangle$. This 
representation is often referred to as the \emph{induced representation}
of $(\cH_o,\pi_o)$ and is commonly denoted by $\Ind_L^G \pi_o$. \\

Suppose that we are given a sequence $(F_n)$ of finite subsets of $G$. We wish to prove that if $L$ is \emph{thin with respect to $(F_n)$}, by which 
we mean that
\begin{equation}
\label{thinsubgroup2}
\lim_n \sup_{x, y \in G/L} \frac{|F_n \cap xLy^{-1}|}{|F_n|} = 0,
\end{equation}
then $(F_n)$ is a firm strong sequence with respect to the induced representation $(\cH,\pi)$, that is to say, for every sequence $(s_n)$ 
in $G$, we have
\begin{equation}
\label{firm2}
\lim_n \frac{1}{|F_n|} \sum_{s \in F_n} \pi(ss_n)v = 0,
\end{equation}
in the norm topology on $G$. The assumption that the right hand side  above should vanish is justified once we notice that \eqref{thinsubgroup2}  forces the index of $L$ in $G$ to be infinite and that induced 
representations from an infinite index subgroup $L$ never have non-zero
$\pi(G)$-invariant vectors. \\

The following lemma is quite standard and we give the proof for 
completeness.

\begin{lemma}
\label{decomp}
For every sequence of orthogonal vectors $(v_k)$ in $\cH_o$ such that 
\[
\cH_o = \bigoplus_{k} \, \overline{\linspan\big\{ \pi_o(l)v_k \, : \, l \in L \big\}}
\]
we have
\[
\cH = \bigoplus_{k} \, \overline{\linspan\big\{ \pi(g)f_k \, : \, g \in G \big\}},
\]
where the function $f_k : G \ra \cH_o$ is defined by
\[
f_k(x) = 
\left\{
\begin{array}{cc}
\pi_o(x)^{-1}v_k & \textrm{if $x \in L$} \\
0 &  \textrm{if $x \notin L$}
\end{array}
\right.
.
\]
In particular, if $(\cH_o,\pi_o)$ is cyclic, then so is $(\cH,\pi)$.
\end{lemma}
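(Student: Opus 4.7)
The plan is to recognize $f_k$ as the element of $\cH$ supported on the identity coset of $L \backslash G$ with value $v_k$ there, verify that the cyclic $G$-subspaces $\cH_k := \overline{\linspan\{\pi(g)f_k : g \in G\}}$ are pairwise orthogonal, and then show that their closed sum exhausts $\cH$.

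To set things up, I would fix a set $T \subset G$ of right coset representatives for $L \backslash G$ with $e \in T$, so that the defining inner product identifies $\cH$ isometrically with $\ell^2(T, \cH_o)$. A direct check of the equivariance relation confirms that $f_k$ is a well-defined element of $\cH$ with $\|f_k\| = \|v_k\|_o$, supported only at the representative $e$.

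Next, the key reduction: for any $g, g' \in G$ and any indices $j, k$, the inner product
\[
\langle \pi(g)f_k, \pi(g')f_j \rangle = \sum_{x \in T} \langle f_k(xg), f_j(xg') \rangle_o
\]
contains at most one nonzero term, namely the unique $x \in T$ lying in $Lg^{-1}$, and that term vanishes unless $L(g')^{-1} = Lg^{-1}$. In the surviving case the term is of the form $\langle \pi_o(l) v_k, v_j \rangle_o$ for some $l \in L$, which equals zero for $k \ne j$ by the assumed orthogonality of the $L$-cyclic subspaces $\overline{\linspan\{\pi_o(l) v_k : l \in L\}}$. This gives the pairwise orthogonality of the $\cH_k$.

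Finally, to obtain the decomposition, I would take $f \in \cH$ orthogonal to every $\pi(g) f_k$ and deduce $f = 0$. By the same reduction, $\langle f, \pi(g)f_k \rangle$ collapses to $\langle f(x_g), \pi_o(l_g)^{-1} v_k \rangle_o$, where $x_g \in T$ is the representative of $Lg^{-1}$ and $l_g = x_g g \in L$. The crucial observation is that as $g$ ranges over $G$ the pair $(x_g, l_g)$ runs through all of $T \times L$ (for any target $(x, l)$, take $g = x^{-1} l$). Hence orthogonality for every $g$ and $k$ forces $f(x)$ to be orthogonal to every $\pi_o(l) v_k$ for each $x \in T$; the hypothesis that $\cH_o = \bigoplus_k \overline{\linspan\{\pi_o(l) v_k : l \in L\}}$ then yields $f(x) = 0$ for every $x \in T$, and $L$-equivariance extends this to $f \equiv 0$. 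The only delicate bookkeeping is the identification of which terms in the sum over $L \backslash G$ survive and the parametrization $g \leftrightarrow (x_g, l_g)$; once these are in place, the proof simply transports the $L$-cyclic decomposition of $\cH_o$ into a $G$-cyclic decomposition of $\cH$, and the cyclic statement follows by specializing to a single generator $v_1$ with dense $L$-orbit span.
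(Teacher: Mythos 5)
Your argument is correct and is essentially the paper's own proof: in both cases the inner products $\langle f, \pi(g)f_k\rangle$ collapse to a single surviving coset because $f_k$ is supported on $L$, and the $L$-equivariance of elements of $\cH$ transports the hypothesis $\cH_o = \bigoplus_k \overline{\linspan\{\pi_o(l)v_k : l \in L\}}$ into the conclusion that any $f$ orthogonal to every $\pi(g)f_k$ vanishes. The only differences are cosmetic: you phrase the computation through explicit coset representatives and the parametrization $g \leftrightarrow (x_g,l_g)$, and you spell out the pairwise orthogonality of the subspaces $\overline{\linspan\{\pi(g)f_k : g \in G\}}$, a point the paper leaves implicit.
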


\begin{proof}
We note that once we have checked that $f_k$ is indeed an element in the
induced representation $\cH$ for every $k$, then it suffices to show that if 
$f \in \cH$ satisfies
\[
\langle f, \pi(s)f_k \rangle = 0 \quad \textrm{for all $s \in G$},
\]
and for every index $k$, then $f(s)$ is zero in $\cH_o$ for every $s \in G$. Upon expanding the inner product, we see that this condition exactly means that
\[
\langle f(s), v_k \rangle_o = 0 \quad \textrm{for all $s \in G$ and for every $k$},
\]
since $f_k$ vanishes off $L$. In particular, for every $s \in G$, we have
\[
\langle f(ls), v_k \rangle_o = \langle f(s), \pi_o(l) v_k \rangle_o = 0 \quad \textrm{for all $l \in L$ and for every $k$},
\]
which forces $f(s) = 0$ for all $s \in G$, since every $v_k$ is cyclic for a 
sub-representation of $\cH_o$ and together all of these sub-representation
are assumed to span $\cH_o$.
\end{proof}

We now fix an orthogonal decomposition of $\cH_o$ as in Lemma \ref{decomp} with \emph{unit} 
vectors $(v_k)$. We readily verify that the 
positive definite functions associated to the elements $(f_k)$ in $\cH$
have the form:
\[
\phi_k(g) = \langle f_k, \pi(g) f_k \rangle
= 
\left\{
\begin{array}{cc}
\langle v_k, \pi_o(g) v_k \rangle_o & \textrm{if $g \in L$} \\
0 &  \textrm{if $g \notin L$}
\end{array}
\right..
\]
It is not hard to see that if suffices to establish \eqref{firm2} for $f_k$ for every $k$, 
which explicitly means that one has to prove that for every sequence 
$(s_n)$ in $G$,  
\[
\varlimsup_n 
\Big\|\frac{1}{|F_n|} \sum_{s \in F_n} \pi(ss_n)f_k \Big\|^2
= 
\varlimsup_n \frac{1}{|F_n|^2} \sum_{(s,t) \in F_n \times F_n} \phi_k(s_n^{-1}s^{-1}ts_n)
= 0.
\]
We note that in the case when $L$ is thin with respect to $(F_n)$, then this  
is a straightforward consequence of  the following lemma.

\begin{lemma}
For every finite subset $F \subset G$, we have
\[
\sup_{u \in G} 
\Big|
\frac{1}{|F|^2} 
\sum_{(s,t) \in F \times F} 
\phi_k(u^{-1}s^{-1}tu) 
\Big|
\leq 
\sup_{x,y \in G/L} \frac{|F \cap xLy^{-1}|}{|F|}
\]
for all $k$.
\end{lemma}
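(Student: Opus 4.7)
The plan is to exploit the explicit form of $\phi_k$: by construction, $\phi_k$ is supported on the subgroup $L$, and on $L$ it is the matrix coefficient $\phi_k(g) = \langle v_k, \pi_o(g) v_k\rangle_o$ of a unit vector, so $|\phi_k(g)| \leq \mathbf{1}_L(g)$ by Cauchy--Schwarz. This reduces the problem to a pure counting estimate on the number of pairs $(s,t) \in F\times F$ such that the conjugated product $u^{-1}s^{-1}tu$ lands in $L$.

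Concretely, I would proceed as follows. First, note that
\[
u^{-1}s^{-1}tu \in L \quad \Longleftrightarrow \quad s^{-1}t \in uLu^{-1} \quad \Longleftrightarrow \quad t \in s \cdot uLu^{-1},
\]
so applying $|\phi_k| \leq \mathbf{1}_L$ termwise gives
\[
\Big|\sum_{(s,t) \in F\times F} \phi_k(u^{-1}s^{-1}tu)\Big|
\leq \sum_{s \in F} \big| F \cap s\, uLu^{-1} \big|.
\]
Second, observe that for each $s \in F$ the set $s\, uLu^{-1}$ is of the form $xLy^{-1}$ with $x = su$ and $y = u$ (and, as noted in the excerpt, the set $xLy^{-1}$ is well defined for $x,y \in G/L$). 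Therefore each summand is bounded by $\sup_{x,y \in G/L} |F \cap xLy^{-1}|$, which together with the outer sum over $|F|$ choices of $s$ yields the estimate
\[
\Big|\sum_{(s,t) \in F\times F} \phi_k(u^{-1}s^{-1}tu)\Big| \leq |F| \cdot \sup_{x,y \in G/L} |F \cap xLy^{-1}|.
\]
Dividing both sides by $|F|^2$ and taking the supremum over $u \in G$ on the left gives the claimed inequality.

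There is no real obstacle here; the only thing to check carefully is the bookkeeping that guarantees $xLy^{-1}$ depends only on the left cosets $xL$ and $yL$, and that the sum above coincides with the matrix coefficient appearing in the preceding display (equivalently, that it equals $\|\sum_{s \in F} \pi(su) f_k\|^2$, which incidentally shows the quantity is nonnegative and justifies dropping the absolute value if desired). Combined with the hypothesis \eqref{thinsubgroup2} that $L$ is thin with respect to $(F_n)$, this lemma immediately gives $\varlimsup_n \|\,|F_n|^{-1}\sum_{s \in F_n} \pi(s s_n) f_k \|^2 = 0$ uniformly in the sequence $(s_n)$, completing the proof of Proposition \ref{induced} via Lemma \ref{decomp}.
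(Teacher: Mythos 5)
Your proof is correct and follows essentially the same route as the paper's: both reduce the sum to counting pairs $(s,t)$ with $u^{-1}s^{-1}tu \in L$ via $|\phi_k| \le \mathbf{1}_L$ and then bound the count by $\sup_{x,y \in G/L}|F \cap xLy^{-1}|$. The only cosmetic difference is bookkeeping: the paper groups the pairs by the coset $xL$ containing $su$ and $tu$, obtaining $\sum_x \big(|Fu\cap xL|/|F|\big)^2$ before applying the sup--times--sum bound, whereas you bound the fiber over each fixed $s$ directly; the two estimates coincide.
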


\begin{proof}
We first observe that if $F \subset G$ is a finite subset of $G$, then
\[
F^{-1}F \cap L = \bigcup_{x \in G/L} (F \cap xL)^{-1} (F \cap xL),
\]
and thus, for every $u$ in $G$, 
\[
\sum_{s \in F u} \sum_{t \in F u} 
\langle \pi(s)f_k,\pi(t)f_k \rangle
= 
\sum_{x \in G/L}
\sum_{s \in F u \cap xL} \sum_{t \in F  u \cap xL} 
\langle \pi(s) f_k,\pi(t)f_k \rangle,
\]
since $\langle \pi(s)f_k,\pi(t)f_k \rangle$ vanishes unless $s$ and $t$ belong
to the same  $F \cap xL$,  and these sets are disjoint as $x$ ranges over $G/L$.    
In particular, we have
\begin{eqnarray*}
\Big|
\frac{1}{|F|^2} \, \sum_{s \in F u} \sum_{t \in F  u} 
\langle \pi(s)f_k,\pi(t)f_k \rangle
\Big|
& = &
\Big|
\frac{1}{|F |^2} \, 
\sum_{x \in G/L}
\sum_{s \in F u \cap xL} \sum_{t \in F  u \cap xL} 
\langle \pi(s)f_k,\pi(t)f_k \rangle
\Big| \\
&\leq &
\sum_{x \in G/L} \Big( \frac{|F u \cap xL|}{|F  |}\Big)^2 \\
&\leq &
\sup_{x \in G/L} \frac{|F   \cap xLu^{-1}|}{|F  |} \cdot 
\sum_{x \in G/L} \frac{|F  \cap xLu^{-1}|}{|F |} \\
&\leq &
\sup_{x,y \in G/L} \frac{|F  \cap xLy^{-1}|}{|F |},
\end{eqnarray*}
where the first inequality sign holds since $\phi_k$ is bounded in absolute value by one. This finishes the proof.
\end{proof}

\subsection{Some remarks about semidirect products of groups}

We end this section with a few observations about subgroups of 
semidirect products of groups. \\

We say that a countable group $G$ \emph{splits as a semidirect product}
if there exist two subgroups $N$ and $A$ of $G$ with $N$ normal in $G$
such that $N \cap A$ is the trivial subgroup. This means that we can write 
every element $g$ in $G$ \emph{uniquely} as a product $g = na$ with 
$n \in N$ and $a \in A$, and upon identifying $G$ with $N \times A$ via 
the map $(n,a) \mapsto na$, we can encode the group multiplication in 
$G$ on $N \times A$ by
\[
(n_1,a_1)(n_2,a_2) = (n_1a_1n_2a_1^{-1},a_1a_2), \quad (n_1,a_1), (n_2,a_2) \in N \times A.
\]
We note that if $N$ and $A$ are amenable groups, then so is $G$, and if $(B_n)$ and $(A_n)$ are left F\o lner sequences in the subgroups $N$ and $A$ respectively, such that
\[
\limsup_n \frac{|aB_na^{-1} \triangle B_n|}{|B_n|} = 0, 
\quad \textrm{for all $a \in A$},
\]
then $(B_n A_n)$ is a left F\o lner sequence in $G$. \\

We shall prove below 
that under some natural conditions on $N$ and $A$, the Left Strong Ergodic Theorem holds for any unitary representation of $G$ which is induced from a unitary representation of a subgroup of either $N$ or $A$. \\

We first prove that unitary representations induced from \emph{normal} 
subgroups of a countable amenable group $G$ can never be counter-examples to the Left Strong Ergodic Theorem. 

\begin{proposition}
Let $G$ be a countable amenable group and let $N$ be a normal 
subgroup of $G$ of infinite index. Then $N$ is a thin subgroup 
with respect to every left F\o lner sequence in $G$, and thus every
such sequence is a firm strong sequence with respect to any unitary
representation induced from $N$.
\end{proposition}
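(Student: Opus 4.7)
The plan is to verify the thinness condition \eqref{thin subgroup} for $N$ with respect to any left F\o lner sequence $(F_n)$ in $G$, after which the second assertion of the proposition follows immediately from Proposition \ref{induced}.

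First I would exploit the normality of $N$ to simplify the double-coset structure. Since $N$ is normal, $y^{-1}N = Ny^{-1}$, and hence $xNy^{-1} = xy^{-1}N$ for all $x,y \in G$. In particular, the family of sets $\{xNy^{-1}\}_{x,y \in G/N}$ is nothing but the family of left cosets of $N$ in $G$. Therefore the thinness condition reduces to showing
\[
\lim_n \sup_{z \in G/N} \frac{|F_n \cap zN|}{|F_n|} = 0.
\]

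Next I would push the F\o lner sequence down to the quotient. Let $p : G \to G/N$ be the quotient map and define probability measures on $G/N$ by $\mu_n(\{z\}) = |F_n \cap p^{-1}(z)|/|F_n|$. A short computation using the identity
\[
\bigl| |gF_n \cap p^{-1}(z)| - |F_n \cap p^{-1}(z)| \bigr| \leq |(gF_n \triangle F_n) \cap p^{-1}(z)|
\]
and summing over $z \in G/N$ yields $\| g \cdot \mu_n - \mu_n \|_{\ell^1(G/N)} \leq |gF_n \triangle F_n|/|F_n|$, so $(\mu_n)$ is asymptotically invariant under the left action of $G/N$ on itself (via the left action of $G$ on $G$).

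The core step will be to deduce from this asymptotic invariance that $\alpha_n := \sup_{z \in G/N} \mu_n(\{z\})$ tends to zero, using crucially that $[G:N] = \infty$. Given $k \geq 1$, I would pick any finite subset $S \subset G$ whose image $p(S)$ in $G/N$ has cardinality $k$ (possible since $G/N$ is infinite), and let $z_n \in G/N$ attain the supremum $\alpha_n$. Using $\|s \mu_n - \mu_n\|_\infty \leq \|s\mu_n - \mu_n\|_{\ell^1} \to 0$ for each $s \in S$, one obtains, for $n$ large,
\[
\mu_n(\{p(s) z_n\}) \geq \alpha_n - \tfrac{1}{k}, \quad \text{for all $s \in S$}.
\]
Since the $k$ elements $\{p(s) z_n : s \in S\}$ are distinct, summing gives $1 \geq k(\alpha_n - 1/k)$, i.e.\ $\alpha_n \leq 2/k$ for all sufficiently large $n$. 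As $k$ is arbitrary, $\alpha_n \to 0$, which is exactly the required thinness condition.

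The main obstacle I anticipate is a clean handling of the asymptotic invariance: the inequality $\|s\mu_n - \mu_n\|_{\ell^1} \leq |sF_n \triangle F_n|/|F_n|$ must be used \emph{uniformly} on the finite test set $S$, and one must be careful that $z_n$ depends on $n$ so a pointwise bound (via $\ell^\infty$) is genuinely needed rather than the $\ell^1$ estimate alone. Once thinness is established, Proposition \ref{induced} applied to any unitary representation induced from a unitary representation of $N$ finishes the proof.
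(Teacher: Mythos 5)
Your proposal is correct and follows essentially the same route as the paper: normality collapses the double cosets $xNy^{-1}$ to left cosets of $N$, and the left F\o lner property is used to show that a coset carrying a definite proportion of $F_n$ would force arbitrarily many distinct cosets to carry nearly the same proportion, which is incompatible with total mass $1$ since $[G:N]=\infty$; the conclusion then follows from Proposition \ref{induced}. The only difference is cosmetic: you argue directly with pushforward measures $\mu_n$ on $G/N$ and their asymptotic invariance, whereas the paper runs the same estimate as a proof by contradiction along a subsequence $(F_n x_n)$.
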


\begin{proof}
Suppose that $N$ has infinite index in $G$ and that there exists a
left F\o lner sequence $(F_n)$ in $G$ such that $N$ is not thin with 
respect  to this sequence. We shall prove that this leads to a contradiction. Since $N$ is normal in $G$, we can find (possibly upon passing to a subsequence) a sequence $(x_n)$ in $G$ such that 
\[
\delta = \lim_n \frac{|F_nx_n \cap N|}{|F_n|} > 0.
\]
Since $(F_n)$ is a left F\o lner sequence, we can find, for every finite 
subset $K \subset G/N$, an index $n$ such that
\[
\frac{|F_nx_n \cap xN|}{|F_n|} \geq \frac{\delta}{2} \quad 
\textrm{for all $x \in K$},
\]
and thus
\[
1 
= 
\frac{|F_nx_n|}{|F_n|}
= 
\sum_{x \in G/N} \frac{|F_nx_n \cap xN|}{|F_n|} 
\geq
\sum_{x \in K} \frac{|F_nx_n \cap xN|}{|F_n|} \geq \frac{\delta}{2} \cdot |K|.
\]
Since $N$ has infinite index in $G$, we can choose $K \subset G/N$ so that 
the right hand side is strictly greater than one (for some index $n$), which is a contradiction.
\end{proof}

We shall see that the case when the unitary representation is induced from $A$ is slightly 
trickier, and the argument is quite different than the one outlined in the 
last proof. To simplify matters, we shall impose an unnecessarily strong 
assumption on the subgroup $A$. 

\begin{proposition}
Let $G$ be a countable  group which splits as a semi-direct 
product of two subgroups $N$ and $A$ of $G$
with $N$ normal in $G$. If the action of $A$ on 
$N \setminus \{e\}$ has trivial stabilizers, then $A$ is 
thin with respect to any sequence of the form $(B_n A_n)$,
where $(B_n)$ and $(A_n)$ are sequences of finite subsets
of $N$ and $A$ respectively whose sizes both tend to infinity.

In particular, any such sequence is a firm strong sequence for any unitary 
representation of $G$ which is induced from a unitary representation 
of $A$. 
\end{proposition}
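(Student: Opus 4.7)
The plan is to verify the thinness condition \eqref{thinsubgroup2} directly for $L = A$ using the semidirect-product structure, after which the ``in particular'' assertion follows immediately from Proposition \ref{induced}.

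First I would set up the parametrization. Uniqueness of the $NA$-decomposition gives a bijection $N \times A \to G$, $(b,a) \mapsto ba$, under which every coset in $G/A$ has a unique representative in $N$, and $|F_n| = |B_n A_n| = |B_n||A_n|$. For representatives $x, y \in N$ and $a \in A$, normality of $N$ gives $ay^{-1}a^{-1} \in N$, so
\[
xay^{-1} = \bigl(x \cdot (ay^{-1}a^{-1})\bigr)\cdot a
\]
is the $NA$-decomposition of $xay^{-1}$. Consequently
\[
F_n \cap xAy^{-1} = \bigl\{xay^{-1} : a \in A_n,\ x \cdot (ay^{-1}a^{-1}) \in B_n \bigr\}.
\]

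Next I would invoke the trivial-stabilizer hypothesis. When $y \neq e$, the element $y^{-1}$ lies in $N \setminus \{e\}$ and has trivial $A$-stabilizer, so $a \mapsto ay^{-1}a^{-1}$ is injective on $A$. Hence $a \mapsto x \cdot (ay^{-1}a^{-1})$ is injective from $A_n$ into $N$, and each $b \in B_n$ is attained by at most one $a \in A_n$; this gives $|F_n \cap xAy^{-1}| \leq \min(|A_n|, |B_n|)$. When $y = e$, the condition on $a$ collapses to $x \in B_n$ with $a \in A_n$ arbitrary, giving $|F_n \cap xA| \leq |A_n|$. In either case
\[
\frac{|F_n \cap xAy^{-1}|}{|F_n|} \leq \frac{1}{\min(|A_n|, |B_n|)},
\]
uniformly in $x, y \in G/A$. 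Since $|A_n|, |B_n| \to \infty$, the supremum tends to zero, establishing \eqref{thinsubgroup2}, and Proposition \ref{induced} completes the proof.

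The only delicate point is the bookkeeping: one must recognize that the semidirect-product decomposition realizes $xAy^{-1}$ as a set parametrized by $a \in A$ through the map $a \mapsto ay^{-1}a^{-1}$. Once this is in place, the trivial-stabilizer assumption supplies exactly the injectivity needed to pass from a count over $a$'s to a count over $b$'s, and the estimate is immediate. The minor asymmetry between the identity coset and the remaining cosets forces the two-case split, but it produces only the weaker (yet still sufficient) bound $1/|B_n|$ in the $y = e$ case.
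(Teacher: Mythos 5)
Your proof is correct and follows essentially the same route as the paper: reduce the suprema in \eqref{thinsubgroup2} to coset representatives in $N$, use the uniqueness of the $NA$-decomposition to identify $F_n \cap xAy^{-1}$ with the set of $a \in A_n$ for which $x(ay^{-1}a^{-1}) \in B_n$, and then exploit the trivial-stabilizer hypothesis (injectivity of $a \mapsto ay^{-1}a^{-1}$) together with the separate treatment of the identity coset, exactly as in the paper's count of solutions to $at_na^{-1}=s_n^{-1}b$. The only cosmetic difference is that you count over $a$ while the paper counts over pairs $(b,a)$, yielding the same bound of order $1/\min(|A_n|,|B_n|)$.
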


\begin{remark}
The proposition above applies to the (countable) group 
$G_K$ of all affine transformations of a countable field $K$, which 
splits as a semidirect product of $K$ (translations) and $K^*$ (the multiplicative group of $K$ which acts by multiplication on $K$). 
The action of $K^*$ on $K \setminus \{0\}$ clearly has trivial stabilizers.

When $K = \bQ$ (or more generally
a finite extension thereof), J. Bost and A. Connes in their classical paper \cite{BC} 
constructed a family of \emph{irreducible} unitary representations of 
$G_K$ which are not induced from subgroups of either $K$ or $K^{*}$. 
We do not know if any of these representations constitutes a counter-example to the Left Strong Ergodic Theorem for $G_K$, and we refer to 
the more recent paper \cite{CBC} by T. Crisp for explicit formulas for 
these representations.  
\end{remark}

\begin{proof}
We note that since $G$ splits as a semidirect product of $N$ and $A$, it suffices to take the suprema in \eqref{thinsubgroup2} over $x, y \in N$. 
Thus we want to prove that if the action of $A$ on $N \setminus \{e\}$ has trivial stabilizers, then for every choice of sequences $(s_n)$ and $(t_n)$ in $N$, we have
\[
\varlimsup_n \frac{\big|B_n A_n \cap s_n A t_n\big|}{\big|B_n\big| \cdot \big|A_n\big|} \, = \,
\varlimsup_n \frac{\big|s_n^{-1}B_n A_n t_n^{-1} \cap A\big|}{\big|B_n\big| \cdot \big|A_n\big|}
=
0,
\]
whenever $(B_n)$ and $(A_n)$ are sequences of finite subsets of $N$ and $A$
respectively whose sizes tend to infinity. In other words, we wish to estimate the cardinality of the set of pairs $(b,a)$ in $B_n A_n$ which solves the equation
\[
s_n^{-1}bat_n^{-1} = (s_n^{-1}bat_n^{-1}a^{-1})a \in A.
\]
or equivalently, $at_na^{-1} = s_n^{-1}b$, since $A \cap N = \{e\}$. \\

We shall
separate between two cases. First, if $t_n = e$, then this set of solutions coincides with $s_nA_n$ and thus 
\[
\frac{\big|B_n A_n \cap s_n A t_n\big|}{\big|B_n\big| \cdot \big|A_n\big|}
= \frac{|s_n A_n|}{|A_n| \cdot |B_n|} = \frac{1}{|B_n|}.
\]
Since we assume that $|B_n|$ tends to infinity, we may without loss of generality assume that $t_n \neq e$ for all $n$. \\

In the second case, we assume that $t_n \neq e$ for all $n$. Since the 
action of $A$ on $N \setminus \{e\}$ has trivial stabilizers, there can be 
at most one solution to the equation $at_na^{-1} = s_n^{-1}b$ for every 
$s_n$ and $b$. Thus,
\[
\frac{\big|B_n A_n \cap s_n A t_n\big|}{\big|B_n\big| \cdot \big|A_n\big|}
\leq \frac{|s_n^{-1}B_n|}{|A_n| \cdot |B_n|} = \frac{1}{|A_n|} \ra 0,
\]
which finishes the proof. 
\end{proof}

\section{Embeddings into flabby pairs}
\label{sec:embed}
The aim of this section is to prove Theorem \ref{embed flabby}. We begin 
by recalling the notion of weak containment of unitary representations and
some of its basic properties. 

\begin{definition}[Weak containment]
Let $L$ be a countable group and let $(\cH_\pi,\pi)$ and $(\cH_\rho,\rho)$
be separable unitary representations of $L$. We say that \emph{$\pi$ is weakly contained in $\rho$}, and write $\pi \prec \rho$, if for every 
vector $v \in \cH_\pi$ and for every finite subset $F \subset L$ and $\eps > 0$, there exist vectors $w_1,\ldots,w_k \in \cH_\rho$ such that 
\[
\big| 
\langle v,\pi(s)v \rangle - \sum_{i=1}^k \langle w_i,\rho(s) w_i \rangle
\big| < \eps
\]
for all $s \in F$.
\end{definition}

In particular, if $\pi$ denotes the identity representation on $\cH_\pi = \bC$,
and $\pi$ is weakly contained in $\rho$, then there exists, for every finite 
subset $F \subset L$ and $\eps > 0$, vectors $w_1,\ldots,w_k \in \cH_\rho$
such that
\[
\big| 1 - \sum_{i=1}^k \langle w_i,\rho(s)w_i \rangle \big| < \eps,
\quad
\textrm{for all $s \in F$}.
\]
We stress that this is also the same as saying that the identity representation on any separable Hilbert space, e.g. $\ell^2(\bN)$, is weakly contained in $\rho$. 
We  note that if $L$ is a countable amenable group and $\rho$ denotes the left regular representation on $\ell^2(L)$ and $(F_n)$ is any left F\o lner sequence in $L$, then the sequence 
\[
w_n = |F_n|^{-1/2} \cdot \chi_{F_n} 
\]
in $\ell^2(L)$ can be used to show  that the identity representation is weakly contained in $\rho$. Furthermore, if $L$ is \emph{infinite}, then the Hilbert space $\ell^2(L)$ 
admits no non-zero $\rho(L)$-invariant vectors. \\

On the opposite side of amenability, we have the groups with Kazhdan's property (T), which we define here as follows.

\begin{definition}[Kazhdan's Property (T)]
A countable group $L$ has \emph{(Kazhdan's) property (T)} if whenever $(\cH,\rho)$ is 
a unitary representation of $L$ which weakly contains the identity 
representation, then the Hilbert space $\cH$ admits a non-zero $\rho(L)$-invariant vector.
\end{definition}

Prominent examples of countable groups with property (T) are lattices in 
non-compact and connected simple Lie groups with finite center real rank 
at least two, e.g. $\SL_n(\bZ)$ for $n \geq 3$. For more details about these
groups, we refer the reader to the book \cite{BV}. \\

In order to prove Theorem \ref{embed flabby}, we shall make use of Proposition H.2 in the appendix of \cite{Kech}, which asserts that if $L$ is any countable group and if $(\cH_\pi,\pi)$ and $(\cH_{\rho_o},\rho_o)$ are two \emph{infinite dimensional} unitary representations of $L$ with $\pi \prec \rho_o$, then 
for every unit vector $v \in \cH_\pi$, there exists a sequence $(\sigma_n)$ 
of unitary operators on the Hilbert space $\cH_\rho = \oplus_{\bN} \cH_{\rho_o}$ and a unit vector $w \in \cH_\rho$, such that 
\[
\lim_n \langle w, \sigma_n^{-1}\rho(s) \sigma_n w \rangle 
= \langle v, \pi(s) v \rangle, \quad \forall \, s \in L,
\]
where $\rho = \rho_o^{\oplus \bN}$ is the direct sum unitary representation 
on $\cH_\rho$. We note that if $\rho_o$
does not admit non-zero invariant vectors, then neither does $\rho$. \\

In particular, if we apply all of this to the special case when 
$\pi$ is the identity representation on $\ell^2(\bN)$ and $\pi$ 
is weakly contained in $\rho_o$, then we have proved the 
following proposition. 

\begin{proposition}[Unitary orbits and weak containment]
\label{help flabby}
Let $L$ be a countable group without property (T) and suppose that $(\cH_o,\rho_o)$ is a unitary representation of $L$, which weakly contains the identity representation. Then there exists a (possibly different) unitary representation $(\cH,\rho)$ of $L$, a unit vector $v \in \cH$ and a sequence $(\sigma_n)$ of unitary operators on $\cH$ such that
\begin{equation}
\label{conj to id}
\lim_n \langle v, \sigma_n^{-1}\rho(s)\sigma_n v \rangle = 1,
\end{equation}
for all $s \in L$. Furthermore, if $\rho_o$ does not have invariant vectors,
then neither does $\rho$.
\end{proposition}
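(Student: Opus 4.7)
The plan is to deduce the proposition essentially as a direct application of the Kechris Proposition H.2 that is quoted in the paragraph just above the statement, using as the ``source'' representation the identity representation $\pi$ of $L$ on $\ell^2(\bN)$. The remark preceding the proposition records that the hypothesis $\rho_o \succ \mathrm{id}_\bC$ is equivalent to $\rho_o \succ \pi$, since almost-invariance can be verified independently on each summand of $\ell^2(\bN)$, so $\pi \prec \rho_o$ is available for free. The only real input required is therefore that $\pi$ and $\rho_o$ both be infinite-dimensional so that Kechris' result applies, plus verification that the target vector $v$ one plugs in yields the right constant $1$ on the right-hand side.

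First I would dispose of the two degenerate cases. If $\rho_o$ already admits a non-zero $\rho_o(L)$-invariant unit vector $v$, then simply take $\cH = \cH_o$, $\rho = \rho_o$, and $\sigma_n = I$ for all $n$; the inner products are then identically $\|v\|^2 = 1$ and there is nothing to prove. If on the other hand $\rho_o$ is finite-dimensional, then decomposing $\rho_o$ into irreducibles one sees that weak containment of $\mathrm{id}_\bC$ forces at least one trivial sub-representation (on a finite-dimensional space, the unit sphere is compact and the function $u \mapsto \sup_{s \in F} |\langle u, \rho_o(s) u\rangle - 1|$ is continuous, so no invariant vector would create a uniform gap contradicting weak containment); this reduces again to the first case. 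Hence in the substantive case we may assume $\cH_o$ is infinite-dimensional and $\rho_o$ has no invariant vectors.

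Now I would invoke the Kechris Proposition H.2 with the source representation $\pi$ on $\ell^2(\bN)$ and any unit vector $v_\pi \in \ell^2(\bN)$. It produces the representation $\rho = \rho_o^{\oplus \bN}$ on $\cH = \bigoplus_\bN \cH_o$, a unit vector $w \in \cH$, and a sequence $(\sigma_n)$ of unitary operators on $\cH$ with
\[
\lim_n \langle w, \sigma_n^{-1} \rho(s) \sigma_n w \rangle = \langle v_\pi, \pi(s) v_\pi\rangle \quad \text{for every } s \in L.
\]
Since $\pi(s) = \mathrm{id}$ for all $s$, the right-hand side equals $\|v_\pi\|^2 = 1$, which is exactly \eqref{conj to id} with $v := w$. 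Finally, the ``furthermore'' clause is immediate: a vector in $\rho_o^{\oplus \bN}$ is $\rho(L)$-invariant if and only if each of its components is $\rho_o(L)$-invariant, so absence of invariant vectors passes from $\rho_o$ to $\rho$. The main obstacle here is not really an obstacle so much as careful bookkeeping -- checking the dimension hypothesis of the Kechris result and treating the case of an already-invariant vector -- since the nontrivial work of producing the approximating unitaries $(\sigma_n)$ is absorbed into the cited black-box.
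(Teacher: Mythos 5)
Your proposal is correct and follows essentially the same route as the paper: the proposition is obtained by applying Kechris' Proposition H.2 with the identity representation on $\ell^2(\bN)$ as the source (using the observation that weakly containing $\mathrm{id}_{\bC}$ is the same as weakly containing the identity on $\ell^2(\bN)$), and the ``furthermore'' clause follows since invariant vectors in $\rho_o^{\oplus \bN}$ have $\rho_o(L)$-invariant components. Your explicit treatment of the degenerate cases (an already-invariant vector, or $\rho_o$ finite-dimensional, where weak containment of the trivial representation forces an invariant vector) is careful bookkeeping that the paper glosses over, but it does not change the argument.
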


\begin{proof}[Proof of Theorem \ref{embed flabby}]
We recall that we want to prove that whenever $L$ does not have 
Kazhdan's property (T), then $L$ embeds into a countable group $G$ which 
admits a unitary representation $(\cH,\tilde{\rho})$ with no 
non-zero $\tilde{\rho}(L)$-invariant vector, with the property that
for some unit vector $v \in \cH$ and for every sequence $(F_n)$ 
of finite subsets of $L$, there is a sequence $(s_n)$ in $G$
such that
\[
\lim_n \Big\| \frac{1}{|F_n|} \sum_{s \in F_n} \tilde{\rho}(ss_n)v \Big\| = 1.
\]
To prove this, we use the assumption that $L$ does not have property (T)
to pick a unitary representation $(\cH_o,\pi_o)$ of $L$ which
weakly contains the identity representation, but which does not 
have any non-zero invariant vectors. Let $(\cH,\rho)$ be the unitary representation of $L$ associated to $(\cH_o,\pi_o)$ via Proposition 
\ref{help flabby} and let $(\sigma_n)$ be the corresponding sequence of
unitary operators on $\cH_o$. We denote by $S$ the countable subgroup of the group of unitary operators on $\cH$ generated by the elements 
$\sigma_n$. 

We let $G$ denote the free product of $L$ and $S$ and we note that by the universal property of free products, there exists a unitary representation 
$\tilde{\rho}$ of $G$ on the Hilbert space $\cH$ which extends $\rho$ and satisfies $\tilde{\rho}(s) = s$ for all $s \in S$. Since $\rho$ does not have 
invariant vectors and $\tilde{\rho}$ extends the action of $\rho$ on the same Hilbert space, we conclude that $\tilde{\rho}$ does not admit any non-zero $L$-invariant vectors.

We now fix a sequence $(F_n)$ of finite subsets of $L$ and we note that \eqref{conj to id} guarantees that for every integer $n$, we can find an index 
$k_n$ such that  
\[
\Re \langle v, \sigma_{k_n}^{-1}\rho(s)\sigma_{k_n} v \rangle 
\geq  
1 - \frac{1}{n}
\quad 
\textrm{for all $s \in F_n^{-1} F_n$},
\]
where $\Re$ denotes the real part of a complex number. In particular, we have
\[
\Big\| \frac{1}{|F_n|} \sum_{s \in F_n} \tilde{\rho}(s\sigma_{k_n})v \Big\|^2
= 
\frac{1}{|F_n|^2} \, \sum_{(s,t) \in F_n \times F_n} \Re \langle v, \sigma_{k_n}^{-1}\rho(s^{-1}t)\sigma_{k_n} v \rangle \geq 1 - \frac{1}{n},
\]
which finishes the proof.
\end{proof}

\section{Examples of contracting triples and conjugation-thick subgroups}
\label{examplesconj}
In this section we shall give two classes of examples of (left amenable) contracting triples and thereby supplying explicit examples of triples (and groups) for which the Left Strong Ergodic Theorem fails in the strongest possible way. 

\subsection{HNN-extensions}
The first class of examples stems from special cases of HNN-extensions. 
Suppose that we are given a countable group $H$ and an 
\emph{injective} homomorphism $\alpha : H \ra H$. We define the countable group
\[
G_\alpha = \big\langle H, t \, | \, tht^{-1} = \alpha(h) \, \quad \textrm{for all $h \in H$} \big\rangle, 
\]
that is to say, $G_\alpha$ is the group generated by $H$ and the cyclic group
generated by the element $t$, which is assumed to satisfy the relation 
$tht^{-1} = \alpha(h)$ for all $h$ in $H$. We note that once $G_\alpha$ has been defined, the subgroups $H_k = t^{-k}Ht^{k}$ form an ascending 
chain of subgroups of $G_\alpha$, i.e. 
\begin{equation}
\label{asc}
\ldots < H_{-2} < H_{-1} < H < H_1 < H_2 < \ldots.
\end{equation}
If we set $L_\alpha = \cup_k H_k$, then $G_\alpha$ splits as
a semidirect product of the normal subgroup $L_\alpha$ and the cyclic 
subgroup generated by $t$. In particular, if $H$ is amenable, then so is
$G_\alpha$. \\

For instance, we could start with the group $H = \bZ$
with the injective homomorphism $\alpha(n) = pn$, where
$p$ is some non-zero integer. In this case, the group $L_\alpha$ coincides with $Z[1/p]$ and $G_\alpha$ is isomorphic
to the (outer) semi-direct product $Z[1/p] \rtimes \bZ$, where $\bZ$ acts on $Z[1/p]$ by multiplication with powers of $p$. \\

We shall now prove the following simple proposition about these groups.

\begin{proposition}
For every countable group $H$ and injective homomorphism 
$\alpha : H \ra H$, the triple $(G_\alpha,H,L_\alpha)$ is contracting.
Furthermore, if $\alpha(H) \neq H$, then $H$ 
has infinite index in $L_\alpha$.
\end{proposition}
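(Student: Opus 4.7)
The plan is to exploit the two structural features of the construction: first, that $L_\alpha$ is by definition the ascending union of the conjugates $H_k = t^{-k}Ht^k$, and second, that conjugation by a suitable power of $t$ inside $G_\alpha$ sends any $H_k$ back onto $H$. The contracting property will then follow essentially for free, while the infinite-index assertion reduces to a short index computation along the chain \eqref{asc}.

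To verify that $(G_\alpha, H, L_\alpha)$ is contracting, let $F \subset L_\alpha$ be any finite subset. Since the chain \eqref{asc} is ascending and $L_\alpha = \bigcup_{k \geq 0} H_k$, I can choose $k \geq 0$ large enough so that $F \subset H_k = t^{-k} H t^k$. Setting $s = t^k \in G_\alpha$ then gives
\[
sFs^{-1} = t^k F t^{-k} \subset t^k (t^{-k} H t^k) t^{-k} = H,
\]
which is exactly the contracting condition with the element $s$ playing the role of the conjugator in the definition.

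For the second assertion, assume $\alpha(H) \neq H$; since $\alpha$ is an injective endomorphism of $H$, this forces the strict inclusion $\alpha(H) \subsetneq H$. Conjugation by $t^{k+1}$ is an inner automorphism of $G_\alpha$ and sends $H_{k+1}$ onto $H$ and $H_k$ onto $tHt^{-1} = \alpha(H)$. Consequently the pair $(H_k, H_{k+1})$ is carried bijectively onto $(\alpha(H), H)$, so
\[
[H_{k+1} : H_k] = [H : \alpha(H)] \geq 2 \qquad \textrm{for every $k \geq 0$}.
\]
Iterating yields $[H_k : H] \geq 2^k$, and since $H \subset H_k \subset L_\alpha$ for every $k$, we conclude $[L_\alpha : H] = \infty$.

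The one point that is not immediate from the presentation is that the chain \eqref{asc} is actually strictly ascending inside $G_\alpha$ (equivalently, that $H$ embeds faithfully into the ascending HNN-extension $G_\alpha$ and that distinct $t$-conjugates remain distinct when $\alpha$ is not surjective). This is the content of Britton's Lemma for HNN-extensions and is already tacitly assumed in the excerpt -- in particular in the worked example $H = \bZ$, $\alpha(n) = pn$, where $L_\alpha$ is identified with $\bZ[1/p]$. I will simply invoke it.
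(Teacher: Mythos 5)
Your proof is correct and follows essentially the same route as the paper: the contracting property via $F\subset H_k$ and conjugation by $t^k$, and the infinite-index claim via multiplicativity of the index along the chain, with $[H_{j+1}:H_j]=[H:\alpha(H)]\geq 2$ being the same computation the paper phrases as $[H_1:H]>1$. Your explicit appeal to Britton's Lemma to guarantee that $H$ embeds in $G_\alpha$ (so the index computations take place where they should) is a point the paper leaves tacit, but it does not change the argument.
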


\begin{proof}
We fix a finite subset $F \subset L_\alpha$. By the ascending chain \eqref{asc} of subgroups, there exists an index $k$ 
such that $F \subset H_k = t^{-k}Ht^{k}$, and thus $t^k F t^{-k} \subset H$,
which shows that $(G_\alpha,H,L_\alpha)$ is contracting. The second
part follows from the multiplicativity of the index; note that for every positive 
integer $k$, we have
\[
[H_k : H] = [H_k : H_{k-1}] \cdot [H_{k-1} : H ] 
= \ldots = \prod_{j=0}^{k-1} \, [H_{j+1} : H_j],
\]
and 
\[
[H_{j+1} : H_{j}] = [t^{-(j+1)}Ht^{(j+1)} : t^{-j}Ht^{j}] = [t^{-1}Ht : H ] 
= [H_1 :H],
\]
for every $j$, where the second equality is justified by the fact that conjugation by $t^{-j}$ does not change the index. In particular, we see that $H$ has infinite index in $L$ whenever $H$ is a
proper subgroup of $H_1$, which is the case if $\alpha(H) \neq H$.
\end{proof}

\subsection{Permutations with finite supports}

We shall now show that the countable group $\Sym_o(\bN)$ 
consisting of those permutations $s$ on $\bN$ for which the support 
\[
\supp(s) = \big\{ x \in \bN \, : \, s(x) \neq x \big\} 
\]
is finite, admits conjugation-thick subgroups. We note that  
$\Sym_o(\bN)$ can be written as a union of an increasing chain of finite subgroups, that is to say, it is locally finite and thus amenable. 

\begin{proposition}
For every non-empty finite set $K \subset \bN$, the subgroup 
\[
L_K = \big\{ s \in \Sym_o(\bN) \, : \, s|_{K} = id \big\} < \Sym_o(\bN).
\] 
is a proper conjugation-thick subgroup of $\Sym_o(\bN)$.
\end{proposition}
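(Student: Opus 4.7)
The plan is to verify directly that $L_K$ satisfies the definition of a conjugation-thick subgroup, i.e.\ that for every finite subset $F \subset \Sym_o(\bN)$ there exists $t \in \Sym_o(\bN)$ with $tFt^{-1} \subset L_K$, and then to check that $L_K \neq \Sym_o(\bN)$.

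The key observation is that for any permutation $s$ and any $t$, one has $\supp(tst^{-1}) = t(\supp(s))$, and consequently $tst^{-1}$ fixes $K$ pointwise if and only if $t^{-1}(K) \cap \supp(s) = \emptyset$. So given a finite subset $F \subset \Sym_o(\bN)$, I would set
\[
S = \bigcup_{s \in F} \supp(s),
\]
which is a finite subset of $\bN$ since each $s \in F$ has finite support and $F$ is finite. The task then reduces to producing a \emph{single} element $t \in \Sym_o(\bN)$ such that $t^{-1}(K) \cap S = \emptyset$, because this single condition simultaneously forces every $tst^{-1}$ (for $s \in F$) to lie in $L_K$.

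To produce such a $t$, I would use that $\bN \setminus (K \cup S)$ is infinite, so it contains a subset $K'$ of the same cardinality as $K$. Fix a bijection $\sigma : K \to K'$ and let $t \in \Sym_o(\bN)$ be the product of the $|K|$ pairwise disjoint (hence commuting) transpositions $(k,\sigma(k))$ for $k \in K$. Then $t$ is an involution of finite support and $t^{-1}(K) = t(K) = K' \subset \bN \setminus S$, so $t^{-1}(K) \cap S = \emptyset$ as required. This shows $tFt^{-1} \subset L_K$.

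Finally, $L_K$ is proper: since $K$ is non-empty, pick $k \in K$ and any $n \in \bN \setminus K$; the transposition $(k,n)$ lies in $\Sym_o(\bN) \setminus L_K$. I do not anticipate any real obstacle here; the proof is a direct unwinding of the definition once one notes the support identity $\supp(tst^{-1}) = t(\supp(s))$ and exploits the infiniteness of $\bN$ to shift $K$ off the finite set $S$.
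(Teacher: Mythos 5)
Your proof is correct and is essentially the paper's argument: both conjugate by a finite-support involution that exchanges $K$ with a set of the same size lying outside $\bigcup_{s \in F}\supp(s)$, so that the conjugates of all elements of $F$ have support disjoint from $K$. Your version is marginally cleaner in that you take the replacement set $K'$ disjoint from $K$ as well, and you add the (trivial) properness check that the paper omits.
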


\begin{proof}
We fix a finite set $K \subset \bN$ and pick a finite subset $F \subset \Sym_o(\bN)$. 
We want to show that there exists an element $\sigma \in \Sym_o(\bN)$ such that $\sigma F \sigma^{-1} \subset L_K$, or equivalently 
$F \subset L_{\sigma^{-1}(K)}$. To prove this, we pick any finite subset 
\[
M \subset \bN \setminus \bigcup_{s \in F} \supp(s)
\]
with $|M| = |K|$ and choose an involution $\sigma$ in $\Sym_o(\bN)$ which maps $M$ onto $K$ and leaves all elements outside $M \cup K$ untouched. Clearly, $F \subset L_M = L_{\sigma^{-1}(K)}$, which 
finishes the proof.
\end{proof}

\section{Automatic pointwise ergodic theorems}
\label{sec:automatic}

We begin by describing two classes of rigid pairs of countable groups.

\begin{example}[Affine groups over countable fields]
Let $\bK$ be a countable (infinite) field and let $G_{\bK}$ denote the
semidirect product of the additive group $\bK$ and the multiplicative
group $\bK^*$, where the latter acts on $\bK$ by multiplication. 
Suppose $(\cH,\pi)$ is a unitary representation of $G_{\bK}$ without 
any $\pi(G_{\bK})$-invariant vectors, but with a $\pi(\bK^*)$-invariant 
unit vector $v_o$. We define
\[
\phi(b,a) = \langle \pi(b,a)v_o, v_o \rangle, \quad \textrm{for $b \in \bK$ and $a \in \bK^*$}, 
\]
and note that since $v_o$ is $\bK^*$-invariant, the function $\phi$ is 
independent of the second coordinate and satisfies 
\[
\phi(a \cdot b,1) = \phi(b,1) 
\quad 
\textrm{for all $b \in \bK$ and 
$a \in \bK^*$}.
\]
Since every non-zero $\bK^*$-orbit in $\bK$ equals $\bK$, we conclude that 
\[
\phi(b,a) = 
\left\{
\begin{array}{cc}
1 & \textrm{if $b = 0$} \\
\phi(1,1) & \textrm{otherwise} 
\end{array}
\right..
\]
It is now a straightforward exercise to verify that $\phi(1,1) = 0$ using 
the Left Weak Ergodic Theorem and the assumption that $(\cH,\pi)$ does 
not admit any non-zero $\pi(G)$-invariant vectors. We conclude that the 
pair $(G_{\bK},\bK^*)$ is rigid.
\end{example}

\begin{example}[Character rigid groups]
Let $G_o$ be a countable group and define the pair
\[
G = G_o \times G_o \qand H = \Delta_2(G_o),
\]
where the latter group denotes the diagonal subgroup in $G$. If 
$(\cH,\pi)$ is a unitary representation of $G$ with a $\pi(H)$-invariant
unit vector $v_o$, then one can readily verify that
\[
\phi(s,t) = \langle v_o, \pi(s,t)v_o \rangle \quad \textrm{for $(s,t) \in G$}
\]
satisfies $\phi(s,t) = \phi_o(s^{-1}t)$ for all $s, t \in G_o$, where 
$\phi_o(t) = \phi(e,t)$, and $\phi_o$ is a conjugation-invariant 
function on $G_o$ with $\phi_o(e) = 1$. Functions of this form are often
referred to as \emph{characters} in $G_o$, and we say that $G_o$ is 
\emph{character rigid} if every character which is not constant is simply
the indicator function of the identity element. Clearly, the pair $(G,H)$ 
is rigid if and only if $G_o$ is character rigid. 

Recently it has been proved
by A. Thom and J. Peterson in \cite{PT13} that the groups $\SL_n(R)$ are character rigid for $n \geq 2$ and for a wide variety of infinite rings $R$, including all infinite fields, and
A. Dudko and K. Medynets have proved that the Higman-Thompson group
is character rigid.
\end{example}

We shall now give the short proof of Theorem \ref{automatic}. 
Let $(G,H)$ is a rigid pair and suppose that $(X,\nu)$ is an 
ergodic probability measure preserving $G$-space. We note
that $G$ acts in a unitary way on $L^2_o(X,\nu)$ (equivalence 
classes of square-integrable functions with zero integral) via 
the left regular representation (Koopman representation). If 
$\varphi \in L^2_o(X,\nu)$ is $H$-invariant, then 
\begin{equation}
\label{orthonormal}
\int_X \varphi(x) \, \overline{\varphi(s^{-1} \cdot x)} \, d\nu(x) = 0
\end{equation}
for all $s \notin H$ by the assumption that $(G,H)$ is rigid. \\

Let $(F_n)$ be any strictly increasing and nested sequence of finite 
subsets of $G/H$ so that we can write
\[
F_n = \big\{s_{1}H,\ldots,s_{m_n}H\big\},
\]
for some sequence $m_n$ of integers which tend to infinity, and 
distinct elements $s_k H$ in $G/H$. 
We set
\[
\varphi_k(x) = \varphi(s_k^{-1}x) \quad \textrm{for $k \geq 1$}
\]
and note $(\varphi_k)$ is a sequence of orthogonal vectors in 
$L^2(X,\nu)$ with 
\[
\int_X |\varphi_k|^2 \, d\nu = \int_X |\varphi|^2 \, d\nu
\quad 
\textrm{for all $k$.}
\]
Theorem \ref{automatic} is now an immediate consequence of
the following rather standard lemma in measure theory (sometimes
attributed to A. Rajchman) whose proof we include here for completeness.
\begin{lemma}
Let $(X,\nu)$ be a probability measure space and suppose that
$(\varphi_{k})$ is sequence of orthonormal vectors in $L^2_o(X,\nu)$.
There exists a conull subset $X' \subset X$ such that
\[
\lim_n \frac{1}{n} \sum_{k=1}^{n} \varphi_{k}(x) = 0
\]
for all $x \in X'$.
\end{lemma}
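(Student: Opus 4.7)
The strategy is the classical square-subsequence plus interpolation argument using only second-moment estimates from orthonormality. Set $S_n = \sum_{k=1}^{n} \varphi_k$ and $T_n = S_n/n$. Since the $\varphi_k$ are orthonormal in $L^2_o(X,\nu)$, we have $\int_X |S_n|^2\, d\nu = n$, hence $\int_X |T_n|^2\, d\nu = 1/n$.

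First I would prove almost everywhere convergence along the fast subsequence $n_m = m^2$. The decay just observed gives $\sum_m \int_X |T_{m^2}|^2\, d\nu = \sum_m 1/m^2 < \infty$, so by monotone convergence $\sum_m |T_{m^2}(x)|^2$ is finite for $\nu$-a.e.\ $x$, which forces $T_{m^2}(x) \to 0$ on a conull set $X_1$. Then I would interpolate. For $m^2 \le n < (m+1)^2$, write
\[
T_n - T_{m^2} = \frac{S_n - S_{m^2}}{n} + S_{m^2}\left(\frac{1}{n} - \frac{1}{m^2}\right).
\]
The second summand is bounded pointwise by $|T_{m^2}(x)| \cdot (2m+1)/m^2$, which vanishes on $X_1$. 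For the first, introduce the maximal oscillation $M_m(x) = \max_{m^2 \le k \le (m+1)^2} |S_k(x) - S_{m^2}(x)|$. A pointwise Cauchy-Schwarz applied to the partial sum $S_k - S_{m^2} = \sum_{j=m^2+1}^{k}\varphi_j$ yields $M_m^2 \le (2m+1)\sum_{j=m^2+1}^{(m+1)^2}|\varphi_j|^2$, and integrating gives $\int_X M_m^2\, d\nu \le (2m+1)^2$. Hence $\sum_m \int_X (M_m/m^2)^2\, d\nu < \infty$, so $M_m(x)/m^2 \to 0$ on a conull set $X_2$, and since $|S_n - S_{m^2}|/n \le M_m/m^2$ we deduce $T_n(x) \to 0$ on $X' = X_1 \cap X_2$.

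The only place needing any care is the maximal oscillation bound: the naive estimate $|\sum \varphi_j| \le \sum |\varphi_j|$ would be useless, but Cauchy-Schwarz is enough here precisely because the gap $(m+1)^2 - m^2 = 2m+1$ between consecutive squares grows much more slowly than $m^2$, making the resulting second moment $(2m+1)^2$ summable after division by $m^4$. This is the reason for choosing the quadratic subsequence; for a slower subsequence one would need the sharper Menshov-Rademacher maximal inequality, but that refinement is unnecessary here.
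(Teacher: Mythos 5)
Your proof is correct and follows essentially the same route as the paper: almost everywhere convergence along the quadratic subsequence via summability of the second moments, followed by interpolation between consecutive squares using only orthonormality. The only (cosmetic) difference is in the interpolation step, where you control the block maximum $M_m$ by a pointwise Cauchy--Schwarz estimate costing a factor $(2m+1)^2$ summed over blocks, whereas the paper computes $\int_X |S_m - S_{n(m)^2}|^2\, d\nu = m - n(m)^2 \le 1 + 2\sqrt{m}$ exactly by orthogonality and sums over all indices $m$; both yield convergent series and the same conclusion.
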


\begin{proof}
Since 
\[
\sum_{n=1}^\infty 
\Big\| \frac{1}{n^2} \sum_{k=1}^{n^2} \varphi_{k} \Big\|^2_2 
=
\sum_{n=1}^\infty \frac{1}{n^2} < \infty,
\]
we conclude by Borel-Cantelli's Lemma, that 
\[
\lim_n \frac{1}{n^2} \sum_{k=1}^{n^2} \varphi_{k}(x) = 0
\]
for all $x$ in some $\nu$-conull subset $X_1 \subset X$. \\

Given an integer $m$, we denote by $n(m)$ the unique positive integer with 
\[
n(m)^2 \leq m \leq (n(m) + 1)^2,
\] 
so that 
\[
\frac{1}{m^2} 
\int_X \, 
\Big| 
\sum_{k=1}^{m} \varphi_{k} - \sum_{k=1}^{n(m)^2} \varphi_{k}
\Big|^2 \, d\nu
\leq 
\sum_{m=1}^\infty \frac{m-n(m)^2}{m^2} \leq \sum_{m = 1}^\infty
\big( m^{-2} + 2 \cdot m^{-\frac{3}{2}} \big) < \infty,
\]
for all $m$, since
\[
m - n(m)^2 \leq (n(m) + 1)^2 - n(m)^2 = 1 + 2n(m) \leq 1 + 2 \sqrt{m}.
\]
This shows, again by Borel-Cantelli' Lemma, that 
\[
\lim_m \frac{1}{m} \Big(\sum_{k=1}^{m} \varphi_{k}(x) - \sum_{k=1}^{n(m)^2}\varphi_k(x) \Big) = 0
\]
for all $x$ in some $\nu$-conull subset $X_2 \subset X$. Hence,
\[
\lim_m \frac{1}{m} \sum_{k=1}^m \varphi_k 
=
\lim_m 
\frac{1}{m} \cdot \Big(  \sum_{k=1}^m \varphi_k(x) - \sum_{k=1}^{n(m)^2}\varphi_k(x)  \Big) + 
\lim_m 
\frac{n(m)^2}{m} \cdot \Big(\frac{1}{n(m)^2}\sum_{k=1}^{n(m)^2}\varphi_k(x)  
\Big) = 0,
\]
for all $x \in X' = X_1 \cap X_2$, where the last equality follows from the 
fact that the ratio between the numbers $n(m)^2$ and $m$ tends to one.
\end{proof}

\section*{Appendix: Firm strong sequences in lattices in higher rank Lie groups}

We shall now derive the formulation of Proposition \ref{gn} from that of Theorem 1.7 in \cite{GN}. \\

Let $G$ be a lattice in a non-compact and connected simple Lie group
$\tilde{G}$ with trivial center. Theorem 1.7 in \cite{GN} asserts there 
exists a sequence $(F_n)$ of finite subsets of $G$ (which can be chosen 
as the intersections of $G$ with an increasing family of geometrically 
defined balls in the ambient Lie group $\tilde{G}$) with the property that 
for \emph{every} ergodic probability measure preserving $G$-space $(X,\nu)$, there exist constants $\delta > 0$ and $C$ such that
\[
\sup
\Big\{ 
\Big\| 
\frac{1}{|F_n|} \, \sum_{s \in F_n} \pi(s)f 
\Big\|_2
\, : \,
\|f\|_2 = 1
\Big\}
\leq C \cdot e^{-\delta n}, 
\]
for all $n$, where $\pi$ denotes the left regular representation (Koopman
representation) of $G$ on the Hilbert space $L_o^2(X,\nu)$. This clearly shows 
that $(F_n)$ is a firm strong sequence for left regular (Koopman) representations associated to any ergodic probability measure preserving
$G$-space. \\

In particular, if $K$ is a compact group with Haar probability measure 
$m_K$ and $\pi : G \ra K$ is a homomorphism with dense image in 
$K$, then the associated probability measure preserving $G$-action on $(K,m_K)$
given by $s \cdot k = k\pi(s^{-1})$ is ergodic, and thus
\begin{equation}
\label{ergcompact}
\lim_n \frac{1}{|F_n|} \sum_{s \in F_n} \int_K \psi(k) \, \phi(k\pi(ss_n)) \, dm_K(k)
= 
\int_K \psi \, dm_K \cdot \int_K \phi \, dm_K
\end{equation}
for all real-valued continuous functions $\psi$ and $\phi$ on $K$ and for every 
sequence $(s_n)$ in $G$. \\

We shall now show that this simple observation implies that $(F_n)$ is a 
firm strong sequence for every \emph{finite-dimensional} unitary representation of $G$. Indeed, suppose that $(\cH,\pi)$ is such a representation and let $K$ denote the \emph{compact} closure of $\pi(G)$ 
in the orthogonal  group of $\cH$. We may assume that $\cH$ does not 
admit any non-zero $\pi(G)$-invariant vectors. Since the weak topology 
and the norm topology on $\cH$ coincide, it suffices to show that
\[
\lim_n \frac{1}{|F_n|} \sum_{s \in F_n} \langle w, \pi(ss_n)v \rangle = 0
\]
for all $v$ and $w$ in a weakly dense subspace of $\cH$ and for every sequence $(s_n)$ in $G$. \\

We write 
$k \cdot v$ for the image of a vector $v$ under the action of an element $k$ in $K$, so in particular we have $\pi(s)v = \pi(s) \cdot v$ for all $s$ in $G$. We note that for every $v, w \in \cH$, the bounded function $\phi$ on $G$ defined by
\[
\phi(s) = \langle w, \pi(s) v \rangle, \quad s \in G,
\]
can be extended to a continuous function $\tilde{\phi}$ on $K$, and the 
linear space of all vectors in $\cH$ of the form
\[
w_\psi = \int_K \psi(k) \, k^{-1} \cdot w \, dm_K(k),
\]  
as $\psi$ ranges over all continuous functions on $K$ and $w$ over all 
elements in $\cH$ is weakly dense. By our observation in \eqref{ergcompact},
we have
\[
\lim_n \frac{1}{|F_n|} \sum_{s \in F_n} \langle w_\psi, \pi(ss_n)v \rangle
= 
\lim_n \frac{1}{|F_n|} \sum_{s \in F_n} \int_K \psi(k) \, \tilde{\phi}(k\pi(ss_n)) \, dm_K = \int_K \psi \, dm_K \cdot \int_K \tilde{\phi} \, dm_K,
\]
so it suffices to show that the right hand side vanishes. However, one readily
verifies that the integral of $\tilde{\phi}$ equals $\langle w, P_G v \rangle $, where $P_G$ denotes the projection onto the $\pi(G)$-invariant vectors in $\cH$,
which by assumption is trivial, and thus the right hand side vanishes. \\

If $(\cH,\pi)$ is a \emph{weakly mixing}  unitary representation of $G$, then
we can use the Gaussian measure construction (see e.g. Appendix C in \cite{Kech}) to realize every cyclic sub-representation of $(\cH,\pi)$ as a 
sub-representation of the Koopman representation associated to an 
\emph{ergodic} (for this weak mixing is crucial) probability measure preserving $G$-space, and Proposition \ref{gn} follows. \\

\end{document}